\definecolor{pink}{rgb}{1,.2,.6}
\definecolor{orange}{rgb}{0.7,0.3,0}
\definecolor{blue}{rgb}{.2,.6,.75}
\definecolor{green}{rgb}{.4,.7,.4}
\newcommand{\ccom}[1]{{\color{pink}{Chantal: #1}} }
\newtheorem{lemma}{Lemma}
\newtheorem{theorem}[lemma]{Theorem}
\newtheorem{corollary}[lemma]{Corollary}
\newtheorem{identity}[lemma]{Identity}
\newtheorem{conjecture}[lemma]{Conjecture}
\numberwithin{equation}{section} \numberwithin{lemma}{section}
\newcommand{\kommentar}[1]{}
\newcommand{\RR}{\mathbb{R}}
\newcommand{\QQ}{\mathbb{Q}}
\newcommand{\CC}{\mathbb{C}}
\newcommand{\ZZ}{\mathbb{Z}}
\newcommand{\FF}{\mathbb{F}}
\newcommand{\C}{{\mathcal{C}}}
\newcommand{\F}{{\mathcal{F}}}
\newcommand{\modd}{\text{~mod~}}
\newcommand{\RRe}{{\rm Re}}
\newcommand{\IIm}{{\rm Im}}
\newcommand*{\reff}[1]{(\ref{#1})}
\newcommand{\FX}{{\mathcal{F}}(X)}
\newcommand{\Eab}{E_{a,b}}
\newcommand{\lab}{\lambda_{a,b}}
\newcommand{\al}{\alpha}
\newcommand{\ga}{\gamma}
\newcommand{\denominator}{p^{m_1(\frac{1}{2} + \al) + m_2(\frac{1}{2} + \ga)}}
\newcommand{\denominatoralpha}{p^{m_1(\frac{1}{2} + \al)}}
\newcommand{\summoneiseven}{\sum_{\substack{m_1\geq 0 \\m_1 \;\rm even}}}
\newcommand{\summoneisodd}{\sum_{\substack{m_1\geq 1 \\m_1 \;\rm odd}}}
\newcommand{\summoneisevenpositiv}{\sum_{\substack{m_1 > 0 \\m_1 \;\rm even}}}
\newcommand{\summtenisevenpositiv}{\sum_{\substack{m_1 \geq 10 \\m_1 \;\rm even}}}
\newcommand{\summoneisoddpositiv}{\sum_{\substack{m_1 > 0 \\m_1 \;\rm odd}}}
\newcommand{\summnineisoddpositiv}{\sum_{\substack{m_1 \geq 9 \\m_1 \;\rm odd}}}
\newcommand{\tQprt}{\widetilde{Q}^*_{r,t}}
\newcommand{\tQpzero}{\widetilde{Q}^*(p^{m_1}, p^0)}
\newcommand{\tQpone}{\widetilde{Q}^*(p^{m_1}, p^1)}
\newcommand{\tQptwo}{\widetilde{Q}^*(p^{m_1}, p^2)}
\newcommand{\tQ}{\widetilde{Q}^*}
\newcommand{\Ft}{\mathcal{F}}
\newcommand{\Qt}{\widetilde{Q}^*}
\newcommand{\Ht}{H}
\newcommand{\At}{A}
\newcommand{\Yt}{Y}
\newcommand{\tQtp}{\widetilde{Q}^*(m_1,m_2)}
\newcommand{\tQtpzero}{\widetilde{Q}^*(p^{m_1}, p^0)}
\newcommand{\tQtpone}{\widetilde{Q}^*(p^{m_1}, p^1)}
\newcommand{\tQtptwo}{\widetilde{Q}^*(p^{m_1}, p^2)}
\newtheorem{lemma}{Lemma}
\newtheorem{theorem}[lemma]{Theorem}
\newtheorem{conjecture}[lemma]{Conjecture}
\numberwithin{equation}{section} \numberwithin{lemma}{section} }
\def\thebibliography#1{\section{\centerline{\sc References}}
  \global\def\@listi{\leftmargin\leftmargini
               \labelwidth\leftmargini \advance\labelwidth-\labelsep
               \topsep 1pt plus 2pt minus 1pt
               \parsep 0.25ex plus 1pt \itemsep 0.25ex plus 1pt}
  \list {[\arabic{enumi}]}{\settowidth\labelwidth{[#1]}\leftmargin\labelwidth
    \advance\leftmargin\labelsep\usecounter{enumi}}
    \def\newblock{\hskip .11em plus .33em minus -.07em}
    \sloppy
    \sfcode`\.=1000\relax}
\begin{document}

\title[One-level density of families of elliptic curves]{One-level density of families of elliptic curves and the Ratios Conjectures}

\vspace {2 in}

\author[David]{C. David}\email{cdavid@mathstat.concordia.ca}
\address{Department of Mathematics and Statistics, Concordia University, Montrï¿½al, QC, H3G 1M8, Canada}

\author[Huynh]{D.K. Huynh}\email{dkhuynhms@gmail.com}
\address{Department of Pure Mathematics, University of Waterloo, Waterloo, ON, N2L 3G1, Canada}

\author[Parks]{J. Parks}\email{jparks@mathstat.concordia.ca}
\address{Department of Mathematics and Statistics, Concordia University, Montrï¿½al, QC, H3G 1M8, Canada}



\date{\today}


\begin{abstract}
Using the ratios conjectures as introduced by Conrey, Farmer and Zirnbauer \cite{CFZ},
we obtain closed formulas for the one-level density
for two families of L-functions attached to elliptic curves.
From those closed formulas, we can determine the underlying symmetry types of the families. The one-level scaling density for the first family corresponds to the orthogonal distribution as predicted by the conjectures of Katz and Sarnak,
and the one-level scaling density for the second family
is the sum of the Dirac distribution and the even orthogonal distribution.
This seems to be a new phenomenon, caused by the fact that the
curves of the second families have odd rank. Then, there is a trivial zero at
the central point which accounts for the Dirac distribution, and also affects the
remaining part of the scaling density which is then (maybe surprisingly) the even orthogonal distribution. The one-level density for this second family was
studied in the past for test functions with Fourier transforms of small support, but
since the Fourier transforms of the even orthogonal and odd orthogonal distributions
are undistinguishable for small support, it was not possible to identify the distribution with those techniques. This can be done with the ratios conjectures, and it sheds more light on ``independent" and ``non-independent" zeroes and the repulsion
phenomenon.
\end{abstract}

\maketitle 


\section{Introduction}

Since the work of Montgomery \cite{mont} on the pair correlation of the zeroes of the
Riemann zeta function, it is known that there are many striking similarities between
the statistics attached to zeroes of L-functions and eigenvalues of random
matrices. The seminal work of Montgomery was extended and generalised in many
directions, in particular to the study of statistics of zeroes in families of
L-functions, and their relation to the distribution laws for eigenvalues of random matrices.
It is predicted by the Katz and Sarnak philosophy that in the limit
(for large conductor), the
statistics for the zeroes in families of L-functions follow distribution laws
of random matrices.

We consider in this paper the one-level density for two families of L-functions attached to elliptic curves. Let
$\mathcal{F}$ be such a family of elliptic curves, and let
\begin{equation}
\mathcal{F}(X) = \left\{ E \in \mathcal{F} \;:\; N_E \leq X \right\}.
\end{equation}
be the set of curves of conductor $N_E$ bounded by $X$.

For each $E \in \mathcal{F}$, its
one-level
density is the smooth counting function
\begin{equation}
D(E, \phi) = \sum_{\gamma_E} \phi(\gamma_E),
\end{equation}
where the sum runs over the imaginary part of the normalised zeroes $\gamma_E$ of the L-function $L(s,E)$ of the curve $E$.
We assume that the Generalized Riemann Hypothesis holds for the L-functions $L(s,E)$ which are
normalised such that we can write the zeroes in the critical strip as $\rho_E = 1/2 + i\gamma_E $ with
$\gamma_E \in \RR$ (see Section \ref{allEC} for details). Furthermore, $\phi$ is an even Schwartz test
function.

The average of the one-level density over the family $\mathcal{F}(X)$ is then
defined as
$$
D({\mathcal{F}}; \phi, X) := \frac{1}{|\FX|} \sum_{E \in \FX}
D(E, \phi).
$$

Katz and Sarnak predicted that the average one-level density should satisfy
\begin{equation}
\lim_{X \rightarrow \infty} D({\mathcal{F}}; \phi, X) =
\int_{-\infty}^\infty \phi(t) \mathcal{W}(t) \, dt,
\end{equation}
where $\mathcal{W}(G)$ is the one-level scaling density of eigenvalues near 1 in the group of
random matrices corresponding to the symmetry type of the family $\mathcal{F}$.
Remarkably, it is believed that all natural families can be described by very few
symmetry types, namely we have
\begin{equation} \label{symmetrytypes}
\mathcal{W}(G)(t) = \left\{ \begin{array}{ll} 1 & \mbox{if $G=U$;} \\
1 - \frac{\sin{2 \pi t}}{2 \pi t} & \mbox{if $G = \mbox{Sp}$;}\\
1 + \frac{1}{2} \delta_0(t) & \mbox{if $G = O$;}\\
1 + \frac{\sin{2 \pi t}}{2 \pi t} & \mbox{if $G = SO(\mbox{even})$;}\\
1 + \delta_0(t) - \frac{\sin{2 \pi t}}{2 \pi t} & \mbox{if $G = SO(\mbox{odd})$;}\end{array} \right.
\end{equation}
where $\delta_0$ is the Dirac distribution, and $U, \mbox{Sp}, O, SO(\mbox{even}), SO(\mbox{odd})$, are  the groups
of unitary, symplectic, orthogonal, even orthogonal and odd orthogonal matrices respectively. The function $\mathcal{W}(G)(t)$ is called the one-level scaling density of the group $G$.
We refer the reader to \cite{katz-sarnak-book} for details.

There has been extensive research dedicated to gathering evidence for the Katz and Sarnak conjecture
for the one-level density for various families in the last few years. A standard approach is to compute the one-level
density for test functions $\phi$ with limited support of the Fourier transform, i.e.
$\mbox{supp} \; \hat{\phi} \subseteq (-a, a)$  for some $a \in \RR$. In order to distinguish
between the symmetry types of \eqref{symmetrytypes}, one needs to prove results for
a test function $\phi$ with Fourier transform supported outside $[-1,1]$. This approach
was used in many papers, including \cite{ILS} for various families, and \cite{Young2006}
for the families of elliptic curves over $\QQ$ with conductor up to $X$. As in \cite{Young2006}, we are considering
families of elliptic curves,  but
using a different approach to
study the one-level density, by first writing the {\it ratios conjecture}
of \cite{CFZ}
corresponding to our family,
and using it to deduce a formula for the one-level density.
This allows us to verify (under the ratios conjectures) the symmetry type of the family,
and also to obtain lower order terms for the distribution $\mathcal{W}(G)(t)$.
The arithmetic differences between families with same symmetry type are then reflected in the lower order terms of their one-level densities.
This
approach was also used by the second author in his Ph.D. thesis for the family
of quadratic twists of a fixed elliptic curve \cite{DK-thesis, HKS}.
These lower order terms could also help to understand the average behavior of low-lying zeros (i.e., zeros near or at the central point) of relatively small conductor.
Some other applications to the ratios conjectures to study the one-level densities in various families can be found
in \cite{conrey-snaith, HKS}.

The L-function ratios conjectures originated from the work of Farmer \cite{farmer} about
shifted moments of the Riemann zeta function, and the work of Nonnenmacher and Zirnbauer
\cite{NZ}
about the ratios of characteristic polynomials of random matrices. This was then used
to obtain conjectures about shifted ratios of L-functions from number theory by
Conrey, Farmer and Zirnbauer \cite{CFZ}.

We first consider the family of all elliptic curves over $\QQ$ with discriminant of size about $X$ in Section \ref{allEC}.
The conjectural closed formula for the one-level density for this family is given in
Theorem \ref{oneleveldensityresult}, and we can see that the underlying symmetry type
is orthogonal and then match the conjectures of Katz and Sarnak for the family of all
elliptic curves. For this family, evidence for the symmetry type  also follows from the work of \cite{Young2006} for test functions of limited support.

We then study in Section \ref{oneparameter} a one-parameter family of elliptic curves which was first considered
by Washington \cite{washington}, namely the family
$$
E_t : y^2 = x^3 + t x^2 - (t+3)x + 1, \;\;t \in \ZZ.$$
It was shown by Washington \cite{washington} (under some hypothesis for a positive proportion of $t \in \ZZ$), and then by Rizzo
\cite{rizzo} (unconditionally
for all $t \in \ZZ$) that the sign of the functional equation is negative for the
L-functions of the curves $E_t$, which makes is a very interesting family.
The conjectural closed formula for the one-level density for this family
is given in
Theorem \ref{tratiostheoremtwo}, and the leading terms for the one-level scaling density are
${\mathcal{W}}(\tau) = 1 + \delta_0(\tau) + \frac{\sin{2 \pi \tau}}{2 \pi \tau}$. This
does not correspond to any of the densities
${\mathcal{W}}(G)(\tau)$ of \eqref{symmetrytypes}, but it is the sum of two densities,
the Dirac distribution and ${\mathcal{W}}(SO(\mbox{even}))(\tau)$ which usually
corresponds to families of {\it{even}} rank, and we have a family of {\it{odd}}
rank. This can be explained by the special behavior of the zero of the L-functions
$L(s, E_t)$ at $s=1/2$: since the sign of the functional equation is odd,
$L(1/2, E_t)=0$ for all $t \in \ZZ$, but this zero behaves in a special
way. In particular, this zero is independent in the limit from the other zeroes,
and does not cause any repulsion. This phenomenon was also studied by Miller \cite{Miller-investigation} for general one-parameter families of rank $r$.
Then, by Silverman's specialization theorem \cite{silverman}, every curve in the
family have rank at least $r$, and the $r$ forced zeroes (from the
Birch and Swinnerton-Dyer conjecture) are called the {\it{family zeroes}}.
It was noticed by Miller, by computing the one-level density for test functions
$\phi$ with Fourier transform $\hat{\phi}$ of limited support, that those zeroes are independent from the remaining zeroes, and should correspond to a sum of $r$ Dirac
functions in the density $\mathcal{W}(\tau)$ of the family. But the density function
could not be completely determined (even for the case $r=1$ that we
are considering here)
because one can only take limited support for $\hat{\phi}$, and this
does not allow us to differentiate $\mathcal{W}(G)(\tau)$ between $G=SO(\mbox{odd})$ and $G=SO(\mbox{even})$. See \cite[Section 6.1.3]{Miller}.

By using the ratios conjectures, we obtain that once removing the contribution $\delta_0(\tau)$ coming from
from the family zero, the one-level scaling density is $\mathcal{W}(SO(\mbox{even}))(\tau)$. We give in
Section \ref{weird-density-explained} some details explaining why once the
zero is removed, the corresponding L-functions should indeed behave like a family of even rank, according to the Birch and Swinnerton-Dyer conjectures.\\

\kommentar{
\ccom{We are getting that the sign of the FE is always 1 because we are restricting
to $t \in \ZZ$. It would not be possible for $t \in \QQ$. Also, this can only
happen when the reduction is multiplicative everywhere, see the work of Helfgott.
Maybe discuss that}}

\kommentar{
Finally, we  remark that
an interesting feature of the formulas for the one-level density deduced with the
ratios conjecture approach is that one can get the leading term verifying the underlying symmetry, but also lower order terms, which could be used to refine experimental statistics for small conductor. The first few lower order terms for our families are displayed
in Theorem \ref{oneleveldensityresult} and Theorem \ref{tratiostheoremtwo}.}

\noindent{\bf Acknowledgements:}
The authors are very grateful to  Sandro Bettin, Steve J. Miller, Mike Rubinstein, Nina Snaith and Matt Young for many helpful discussions
related to this paper. A large part of this work was done when the second author was visiting the CRM
and Concordia University, and he thanks both institutions for their hospitality.

\section{The family of all elliptic curves}
\label{allEC}

Let $E_{a, b}$ be an elliptic curve over $\QQ$ given
by
\begin{equation} \label{weierstrassellipticcurve}
\Eab: y^2 = x^3 + ax + b.
\end{equation}
We fix some integers $(r,t)$ such that $(r,3)=1$ and $(t, 2)=1$. We will use them
to impose congruences modulo 6 on $a,b$ to insure that $\Eab$ is minimal at $p=2, 3$, so
we remark that there are 12 choices of $(r,s)$.

We study the family
 \begin{equation*}
       \FX = \{E = \Eab : a\equiv r \modd{6}, b\equiv t\modd{6}, |a| \leq X^{\frac{1}{3}}, |b| \leq X^{\frac{1}{2}}, p^4\mid a \Rightarrow p^6 \nmid b \}
 \end{equation*}
of all elliptic curves having  discriminant of size $\asymp X$. The conditions on $\Eab \in \FX$ insure that
$\Eab$ is a minimal model at all primes $p$. \\

Let $L(s, E)$ denote the $L$-function attached to $E$, normalised in such a way that
the center of the critical strip is the line $\mbox{Re}(s)=1/2$. The average one-level
density over the family is then
$$
D({\mathcal{F}}; \phi, X) = \frac{1}{|\FX|} \sum_{E \in \FX}
\sum_{\gamma_E}\phi(\gamma_E),
$$
where $\gamma_E$ runs over the ordinates of the non-trivial zeroes of $L(s,E)$.

By Cauchy's theorem, we can write the average one-level density as
\begin{equation} \label{eq:Cauchy}
D(\F; \phi, X) = \frac{1}{|\FX|} \sum_{E \in \FX} \frac{1}{2\pi
i}\left(\int_{(c)} - \int_{(1-c)}\right)\frac{L'(s,E)}{L(s,E)}
\phi(-i(s-1/2)) ds
\end{equation}
with $\tfrac{1}{2}<c<1$.

Our strategy is to use the ratios conjectures to write a closed
formula for the logarithmic derivative of $L(s,E)$ in \eqref{eq:Cauchy}.
Following the approach of \cite{CFZ}, we consider the ratio
\begin{equation} \label{eq:ratio}
\frac{1}{|\FX|} \sum_{E \in \FX} \frac{L(\frac{1}{2} +
\alpha,E)}{L(\frac{1}{2} + \gamma,E)}
\end{equation}
for $\al, \ga \in \CC$ with $\RRe (\al),
\RRe (\ga) > 0$.

For a minimal model $E = E_{a,b}$, we have that $\lambda_E(n) = \lambda_{a,b}(n)$
where for $p\neq 2$, $\lab(p)$ is given by
\begin{equation*}
\lab(p) =\frac{1}{\sqrt{p}}(p+1-\#E_p(\FF_p))= -\frac{1}{\sqrt{p}} \sum_{x \modd p} \left(\frac{x^3 + ax
+ b}{p} \right)
\end{equation*}
where $\displaystyle \left( \frac{\cdot}{p} \right)$ denotes the Legendre symbol. If
$p=2$ then $\eqref{weierstrassellipticcurve}$ has a cusp and
$\lambda_{a,b}(2^k)=0$ for all $k\geq 1.$

We recall that the L-function attached to an elliptic curve $E$ is given by
\begin{equation} \label{multiplicative}
L(s, E) = \sum_{n = 1}^\infty \frac{\lambda_E(n)}{n^s} = \prod_p
\left(1- \frac{\lambda_E(p)}{p^s} + \frac{\psi_N(p)}{p^{2s}} \
\right)^{-1}
\end{equation}
where $\psi_N$ is the principal Dirichlet character modulo the
conductor $N_E$ of $E$, i.e.,
\begin{equation*}
\psi_N(p) = \begin{cases}
1 &\:{\rm if}\:p\nmid N_E,\\
0 &\:{\rm if}\:p|N_E.
\end{cases}
\end{equation*}

It follows from $\eqref{multiplicative}$ that
$\lambda_E(n)$ is multiplicative, and prime powers can be computed by
\begin{equation} \label{heckerelations}
\lambda_{E}(p^j) = \bigg\{\begin{array}{ll}
U_j\left(\frac{\lambda_{E}(p)}{2}\right) & \qquad {\rm if}\:
(p,N_E)=1, \\
\lambda_{E}^j(p) & \qquad {\rm if}\: (p,N_E)>1,
\end{array}\bigg.
\end{equation}
where $U_j(x)$ is the $j$-th Chebyshev polynomial of the second
kind. The definition of the Chebyshev polynomials and their properties will be given shortly.

It was proven by Wiles \textit{et al} \cite{wiles, taylor-wiles, bcdt} that
\begin{equation*} \Lambda(s,E) = \Gamma(s+1/2) \left( \frac{\sqrt{N_E}}{2 \pi} \right)^s L(s, E)
\end{equation*} satisfies the functional equation
\begin{equation*}
 \Lambda(s,E)  = \omega_E
\Lambda(1-s, E)
\end{equation*}
where $\omega_E =\pm 1 $ is called the root number of $E$. It follows that we can write the values
$L(s,E)$ as
\begin{equation} \label{AFE}
L(s,E) = \sum_n \frac{\lambda_E(n)}{n^{s}} V_s \left( \frac{2 \pi n}{Y \sqrt{N_E}} \right) + \omega_E X_E(s) \sum_n
\frac{\lambda_E(n)}{n^{1-s}} V_{1-s} \left( \frac{2 \pi n Y}{\sqrt{N_E}} \right),
\end{equation}
where
\begin{equation}
\label{definitionofXE}
X_E(s) =
\frac{\Gamma(\frac{3}{2}-s)}{\Gamma(\frac{1}{2}+s)}\left(\frac{\sqrt{N_E}}{2\pi}\right)^{1-2s}
\end{equation}
and $V_s(y)$ is a smooth function which decays rapidly for large values of $y$.
The above identity is called the approximate functional equation for $L(s,E)$, and
we refer the reader to \cite[Theorem 5.3]{I-K-book} for the details.

One of the steps in the recipe leading to the ratios conjectures is to use
the two sums of the approximate functional equation \eqref{AFE} at $s = \frac{1}{2} + \alpha$ ignoring questions of convergence, or error terms, i.e.
the ``principal sum"
\begin{equation} \label{eq:approximate-princ}
\sum_n \frac{\lab(n)}{n^{\frac{1}{2} + \al}}
\end{equation}
and the ``dual sum"
\begin{equation} \label{eq:approximate-dual}
\omega_E X_E(\tfrac{1}{2} + \al) \sum_n
\frac{\lab(n)}{n^{\frac{1}{2}-\al}} \end{equation}

Finally, we write
\begin{equation} \label{eq:Moebius}
\frac{1}{L(s,\Eab)} = \prod_p \left(1 - \frac{\lab(p)}{p^s} +
\frac{\psi_N(p)}{p^{2s}} \right) = \sum_{n=1}^\infty
\frac{\mu_{a,b}(n)}{n^s},
\end{equation}
where $\mu_{a,b}$ is a multiplicative function given by
\begin{equation} \label{definitionofmu}
\mu_{a,b}(p^k) =
\begin{cases}
-\lab(p) & \mbox{~if~} k = 1,\\
\psi_N(p) & \mbox{~if~} k = 2,\\
0 & \mbox{~if~} k > 2.
\end{cases}
\end{equation}

We are now ready to derive the L-function ratios conjecture for our family. Following the
standard recipe from \cite{CFZ} (see also \cite{conrey-snaith, HKS} for applications to
other families), we replace the
numerator of \reff{eq:ratio} with the principal sum \reff{eq:approximate-princ}
and the dual sum \reff{eq:approximate-dual} of the approximate functional equation and the
denominator of \reff{eq:ratio} with \reff{eq:Moebius}. We first focus on
the principal sum which gives the
sum
\begin{equation} \label{eq:ratiostepone}
R_1(\alpha, \gamma):=\frac{1}{|\FX|}\sum_{E_{a,b} \in \FX} \sum_{m_1, m_2}
\frac{\lab(m_1)\mu_{a,b}(m_2)}{m_1^{\frac{1}{2} +
\al}m_2^{\frac{1}{2} + \ga}}.
\end{equation}
We will consider in a second step the sum coming from
the dual sum, namely the sum
\begin{equation} \label{eq:ratiostepone-dual}
R_2(\alpha, \gamma):=  \frac{1}{|\FX|} \sum_{E_{a,b} \in \FX}\omega_{E_{a,b}} X_{E_{a,b}} \left(\frac{1}{2}+\al\right) \sum_{m_1, m_2}
\frac{\lab(m_1)\mu_{a,b}(m_2)}{m_1^{\frac{1}{2} -
\al}m_2^{\frac{1}{2} + \ga}}.
\end{equation}


\newcommand{\tQm}{\widetilde{Q}^*(m_1,m_2)}
\newcommand{\tQp}{\widetilde{Q}^*(p^{m_1},p^{m_2})}
\newcommand{\tQpmtwoisone}{\widetilde{Q}^*(p^{m_1},p^1)}

The first step to obtain the ratios conjecture for our family is now to replace each $(m_1, m_2)$-summand in
\eqref{eq:ratiostepone} by its
average
\begin{equation} \label{average-ratio}
\lim_{X\rightarrow \infty}\frac{1}{|\FX|}\sum_{E_{a,b} \in
\FX}\lab(m_1)\mu_{a,b}(m_2)
\end{equation}
over all curves in the family. This is similar to the work of Young in \cite{Young2010}
where the author makes a conjecture on the moments of the central values $L(1/2,E)$ for the
same family $\mathcal{F}$. He is then led to averages of the type
\begin{equation*}
\lim_{X\rightarrow \infty}\frac{1}{|\FX|}\sum_{E_{a,b} \in
\FX}\lab(m_1) \cdots \lab(m_k)
\end{equation*}
for the $k$-th moment. In the following, we will use some of the results of \cite{Young2010}, and redo some of his computations
in our setting for the sake of completeness.

\begin{lemma} \label{average}
Let
\begin{equation} \label{Qsum}
\tQm := \frac{1}{(m^*)^2} \sum_{a, b \modd m^*}
\lab(m_1)\mu_{a,b}(m_2)
\end{equation}
 and let $m^*$ be the product of primes dividing $m=[m_1,m_2]$. Furthermore, set $m_i=\ell_i n_i$ where $(n_i,6)=1$ and $p \mid \ell_i \Rightarrow p=2,3$, and set
\begin{equation*}
\widetilde{Q}^*_{r,t}(m_1,m_2):=\lambda_{r,t}(m_1)\mu_{r,t}(m_2)\widetilde{Q}^*(n_1,n_2)\prod_{\substack{p\mid m\\ p > 3}} (1-p^{-10})^{-1}.
\end{equation*}
Then
\begin{equation} \label{BigRLimit}
\lim_{X\rightarrow \infty}\frac{1}{|\FX|}\sum_{E_{a,b} \in
\FX}\lab(m_1)\mu_{a,b}(m_2)  \sim \widetilde{Q}^*_{r,t}(m_1,m_2).
\end{equation}
Furthermore, $\widetilde{Q}^*_{r,t}$ is multiplicative in $m_1$ and
$m_2$.
\end{lemma}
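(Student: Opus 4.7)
The plan is to evaluate the limit by splitting the contributions of the primes $p\in\{2,3\}$ — which are frozen by the congruences $a\equiv r$, $b\equiv t\pmod 6$ — from those of the primes $p>3$, which I will handle by equidistribution of $(a,b)$ modulo $n^*:=\prod_{p\mid n}p$, where $n=[n_1,n_2]$.

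First I would verify that $\lambda_{a,b}(m_1)\mu_{a,b}(m_2)$ depends on $(a,b)$ only through its reduction modulo $m^*$. This follows from multiplicativity of $\lambda_{a,b}$ and $\mu_{a,b}$ in their arguments via \eqref{multiplicative} and \eqref{eq:Moebius}, together with the facts that (i) at a prime of good reduction $\lambda_{a,b}(p^k)$ is a polynomial in $\lambda_{a,b}(p)$, which is the Legendre-symbol sum of $x^3+ax+b\pmod p$; and (ii) at a prime of bad reduction the values $\lambda_{a,b}(p^k)$ and $\mu_{a,b}(p^k)$ are again determined by $(a,b)\bmod p$ through the reduction of the discriminant. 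Writing $m_i=\ell_i n_i$, the $\ell$-factor is then constant on $\FX$ and equals $\lambda_{r,t}(\ell_1)\mu_{r,t}(\ell_2)$, so the remaining task is to average $\lambda_{a,b}(n_1)\mu_{a,b}(n_2)$ over the family.

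Next I would sort $\FX$ by residue class $(a_0,b_0)$ modulo $n^*$ and count. A box-equidistribution argument against the ranges $|a|\le X^{1/3}$, $|b|\le X^{1/2}$ and the fixed $\bmod 6$ conditions gives, for each class, an asymptotic of the form
\[
N(a_0,b_0)\;\sim\;\frac{N_0}{(n^*)^2}\prod_{p\nmid n^*,\ p>3}\!\bigl(1-p^{-10}\bigr)\prod_{p\mid n^*}g_p(a_0,b_0),
\]
where $N_0\asymp X^{5/6}$ is the plain box count and $g_p(a_0,b_0)$ is the conditional local density of the condition $p^4\mid a\Rightarrow p^6\nmid b$: it equals $1$ unless $(a_0,b_0)\equiv(0,0)\bmod p$, in which case it equals $1-p^{-8}$. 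Dividing by $|\FX|\sim N_0\prod_{p>3}(1-p^{-10})$ makes the unconstrained factors $(1-p^{-10})$ for $p\nmid n^*$ cancel, leaving the prefactor $\prod_{p\mid n,\,p>3}(1-p^{-10})^{-1}$.

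The observation that closes the argument is that the residue $(0,0)\bmod p$ contributes nothing to the inner sum: there the reduced curve is $y^2=x^3$, which has additive reduction, so $\lambda_{a_0,b_0}(p)=0$ and $\psi_N(p)=0$, forcing $\lambda_{a_0,b_0}(p^k)=\mu_{a_0,b_0}(p^k)=0$ for every $k\ge1$. Thus the awkward density $g_p$ may be replaced by $1$ inside the residue sum, and by \eqref{Qsum} the sum collapses to $(n^*)^2\widetilde Q^*(n_1,n_2)$, yielding the asserted asymptotic. Joint multiplicativity of $\widetilde Q^*_{r,t}$ in $m_1,m_2$ is then automatic from the three-fold product structure: $\lambda_{r,t}\mu_{r,t}$ is multiplicative by \eqref{heckerelations}--\eqref{definitionofmu}, $\widetilde Q^*(n_1,n_2)$ factors over primes by CRT applied to \eqref{Qsum}, and the Euler product $\prod_{p\mid m,\,p>3}(1-p^{-10})^{-1}$ is multiplicative in its index set. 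The main technical obstacle is the uniform error control in the box-equidistribution step: I will need to show that boundary contributions from the truncations $|a|\le X^{1/3}$, $|b|\le X^{1/2}$ are negligible compared to the main term as $X\to\infty$, and that the box condition and the squarefree-type condition decouple cleanly enough to justify the Euler-product factorization in the limit.
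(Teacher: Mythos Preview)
Your proposal is correct and follows essentially the same outline as the paper: factor off the primes $2,3$ via the congruences $a\equiv r,\ b\equiv t\pmod 6$, then use periodicity modulo $n^*$ and equidistribution to reduce to the complete residue sum $\widetilde Q^*(n_1,n_2)$. The one substantive difference is in how the minimality sieve ``$p^4\mid a\Rightarrow p^6\nmid b$'' is handled. The paper removes it by M\"obius inversion: it extends $\lambda_{a,b},\mu_{a,b}$ to all $(a,b)$ by declaring them zero at primes of non-minimality, writes $\sum_{d^4\mid a,\,d^6\mid b}\mu(d)$, changes variables $(a,b)\mapsto(d^4a,d^6b)$, and observes that the summand vanishes unless $(d,n)=1$, so the $d$-sum collapses to $\prod_{p\nmid 6n}(1-p^{-10})$. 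You instead compute the conditional local density $g_p(a_0,b_0)$ directly and then kill the only class where $g_p\ne 1$, namely $(a_0,b_0)\equiv(0,0)\bmod p$, by noting that a \emph{minimal} curve in that class has additive reduction at $p$, hence $\lambda_{a,b}(p^k)=\mu_{a,b}(p^k)=0$. Both arguments exploit the same vanishing phenomenon (the residue class $(0,0)\bmod p$ contributes nothing) and produce the identical correction factor $\prod_{p\mid m,\,p>3}(1-p^{-10})^{-1}$; yours avoids introducing the extended definition and the auxiliary $d$-sum, at the cost of making the sieve error control slightly less explicit.
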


\begin{proof} We will follow the proof of Lemma 3.2 in \cite{Young2010}. We have that
\begin{align*}
&\lim_{X\rightarrow \infty}\frac{1}{|\FX|}\sum_{E_{a,b} \in
\FX}\lab(m_1)\mu_{a,b}(m_2)\\
=&\lim_{X\rightarrow \infty}\frac{1}{|\FX|}\sum_{\substack{|a|\leq X^{\frac{1}{3}}, |b|\leq X^{\frac{1}{2}} \\ p^4\mid a \Rightarrow p^6\nmid b \\ a\equiv r\modd{6}, b\equiv t\modd{6}}}\lab(m_1)\mu_{a,b}(m_2).
\end{align*}
We now need to extend the definition of $\lab$ and $\mu_{a,b}$ for non-minimal curves $E_{a,b}$. We define
\begin{align*}
\lab(p)&:=\begin{cases}
\lambda_E(p) & \mbox{~if~} E_{a,b}\:{\rm is\:minimal\:at}\:p,\\
0 & \mbox{~otherwise}.
\end{cases}\\
\psi_{a,b}(p)&:=\begin{cases}
1, & \mbox{~if~} p \nmid -16(4a^3+27b^2)),\\
0, & \mbox{~otherwise}.
\end{cases}
\end{align*}
This defines $\mu_{a,b}$ at prime powers by $\eqref{definitionofmu}$, and $\lambda_{a,b}$ is defined at prime powers by the usual relation $\eqref{heckerelations}$. We then extend to $\lambda_{a,b}(n), \mu_{a.b}(n)$ by multiplicativity.

We also have the usual power detector
\begin{equation*}
\sum_{\substack{d^4 \mid a \\ d^6 \mid b}}\mu(d)=\begin{cases} 1, & \quad {\rm if\:there\:does\:not\:exist\:a}\:p\:{\rm such\:that}\:p^4\mid a\:{\rm and}\:p^6\mid b, \\ 0, & \quad {\rm otherwise}.\end{cases}
\end{equation*}
Thus
\begin{align*}
\lim_{X\rightarrow \infty}\frac{1}{|\FX|}\sum_{E_{a,b} \in
\FX}\lab(m_1)\mu_{a,b}(m_2) \end{align*}
can be rewritten as
\begin{align}
\lim_{X\rightarrow \infty}\frac{1}{|\FX|}\sum_{\substack{d\leq X^{\frac{1}{12}} \\ (d,6)=1}}\mu(d)\sum_{\substack{|a|\leq d^{-4}X^{\frac{1}{3}}, |b|\leq d^{-6}X^{\frac{1}{2}} \\ a\equiv d^{-4}r\modd{6}, b\equiv d^{-6}t\modd{6}}}\lambda_{ad^4,bd^6}(m_1)\mu_{ad^4,bd^6}(m_2) \label{rrt}
\end{align}
It follows from our definition of $\lab$ and $\mu_{a,b}$ at non-minimal curves that
$$\lambda_{d^4a,d^6b}(n)=\begin{cases} \lambda_{a,b}(n) & \mbox{~if~} (n,d)=1,\\ 0 & \mbox{~otherwise}, \end{cases}$$ and similarly
$$\mu_{d^4a,d^6b}(n)=\begin{cases} \mu_{a,b}(n) & \mbox{~if~} (n,d)=1,\\ 0 & \mbox{~otherwise}. \end{cases}$$
Thus, if $(n,d)=1$, we have $$\lambda_{d^4a,d^6b}(m_1)\mu_{d^4a,d^6b}(m_2)=\lambda_{r,t}(\ell_1)\mu_{r,t}(\ell_2)\lambda_{a,b}(n_1)\mu_{a.b}(n_2),$$
and $\eqref{rrt}$ becomes
\begin{equation*}
\lim_{X\rightarrow \infty}\frac{1}{|\FX|}\lambda_{r,t}(\ell_1)\mu_{r,t}(\ell_2)\sum_{\substack{d\leq X^{\frac{1}{12}} \\ (d,6n)=1}} \mu(d)\sum_{\substack{|a|\leq d^{-4}X^{\frac{1}{3}}, |b|\leq d^{-6}X^{\frac{1}{2}} \\ a \equiv d^{-4}r \modd{6} \\ b\equiv d^{-6}t \modd{6}}} \lambda_{a,b}(n_1)\mu_{a,b}(n_2).
\end{equation*}
Now $\lambda_{a,b}(n_1)\mu_{a,b}(n_2)$ is periodic in $a$ and $b$ with period equal to the product of primes dividing the least common multiple of $n_1,n_2$ say $n^*$. Breaking up the sum over $a$ and $b$ into arithmetic progressions modulo 6, we rewrite the last equation as
\begin{align*}
&\lim_{X\rightarrow \infty}\frac{1}{|\FX|}\lambda_{r,t}(\ell_1)\mu_{r,t}(\ell_2)\frac{4X^{\frac{5}{6}}}{36}\sum_{\substack{d\leq X^{\frac{1}{12}} \\ (d,6n)=1}} \frac{\mu(d)}{d^{10}}\frac{1}{(m^*)^2}\sum_{\substack{\alpha \modd{n^*} \\ \beta \modd{n^*}}}\lambda_{\alpha,\beta}(n_1)\mu_{\alpha,\beta}(n_2) \\
&=\lim_{X\rightarrow \infty}\frac{1}{|\FX|}\lambda_{r,t}(\ell_1)\mu_{r,t}(\ell_2)\frac{X^{\frac{5}{6}}}{9\zeta_{6n}(10)}\widetilde{Q}^*(n_1,n_2),
\end{align*}
where we define
\begin{equation*}
\zeta_m(s):=\prod_{p\mid m}\left(1-\frac{1}{p^s}\right)^{-1}.
\end{equation*}

Since $$|\FX|\sim \frac{X^{\frac{5}{6}}}{9\zeta_{6n}(10)},$$  $\eqref{rrt}$ becomes
\begin{equation*}
\lambda_{r,t}(\ell_1)\mu_{r,t}(\ell_2)\widetilde{Q}^*(n_1,n_2)\frac{\zeta_6(10)}{\zeta_{6n}(10)}.
\end{equation*}
Since $$\frac{\zeta_6(10)}{\zeta_{6n}(10)}=\prod_{\substack{p\mid n \\ p>3}}(1-p^{-10})^{-1} = \prod_{\substack{p\mid m \\ p>3}}(1-p^{-10})^{-1},$$ this completes the proof of $\eqref{BigRLimit}$.
\end{proof}

Now we replace each term of \eqref{eq:ratiostepone} by its average value $\tQprt(m_1,m_2)$, and using
Lemma \ref{average}, we are led to consider
\begin{align}
H(\alpha, \gamma) &:= \sum_{m_1, m_2} \frac{\tQprt(m_1,m_2)}{m_1^{\frac{1}{2} +
\al}m_2^{\frac{1}{2} + \ga}} = \prod_p \sum_{m_1, m_2}
\frac{\tQprt(p^{m_1},p^{m_2})}{p^{m_1(\frac{1}{2} + \al) + m_2(\frac{1}{2} + \ga)}} \nonumber \\
&=\left(\prod_{2\leq p\leq3}\sum_{m_1,m_2}\frac{\lambda_{r,t}(p^{m_1})\mu_{r,t}(p^{m_2})}{p^{m_1(\frac{1}{2}+\alpha)+m_2(\frac{1}{2}+\gamma)}}\right)\left(\prod_{p>3} \sum_{m_1, m_2}
\frac{\delta(p)\tQp}{p^{m_1(\frac{1}{2} + \al) + m_2(\frac{1}{2} + \ga)}}\right),
\label{eq:Qstart}
\end{align}
where $\delta(p)=(1-p^{-10})^{-1}$ if $m_1+m_2>0$ and $\delta(p)=1$ otherwise.

Thus, it suffices to consider $\tQprt(p^{m_1},p^{m_2})$ at a prime $p$ and integers $m_1, m_2$. Notice that we switched notation, and we are now using $m_1, m_2$ for the exponents of the prime powers. By the definition of the Moebius function in \reff{definitionofmu} only
the terms with $m_2 = 0,1$ and $2$ in \reff{eq:Qstart} contribute.
For $p=2,3$, we denote by $E_p(\alpha, \gamma)$ the Euler factor
\begin{eqnarray} \label{eulerfactor}
E_p(\alpha, \gamma):=\sum_{m_1,m_2}\frac{\lambda_{r,t}(p^{m_1})\mu_{r,t}(p^{m_2})}{p^{m_1(\frac{1}{2}+\alpha)+m_2(\frac{1}{2}+\gamma)}}
\end{eqnarray}
at $p$ in $H(\alpha, \gamma)$.

So we have that
\begin{align*}
H(\alpha,\gamma)=&E_2(\alpha,\gamma)E_3(\alpha,\gamma)\prod_{p>3} \sum_{m_1, m_2}
\frac{\delta(p)\tQp}{p^{m_1(\frac{1}{2} + \al) + m_2(\frac{1}{2} + \ga)}}\\
=&E_2(\alpha,\gamma)E_3(\alpha,\gamma) \prod_{p>3} \Bigg(1+(1-p^{-10})^{-1}\\
\times&\left(\sum_{m_1 \geq 1} \frac{\tQpzero}{\denominatoralpha} + \sum_{m_1
\geq 0}\frac{\tQpone}{p^{m_1(\frac{1}{2} + \alpha) + \frac{1}{2} +
\gamma}} + \sum_{m_1 \geq 0}\frac{\tQptwo}{p^{m_1(\frac{1}{2} +
\alpha) + 1+ 2\gamma}}\right) \Bigg),
\end{align*}
where
\begin{eqnarray} \label{m2is0}
\tQpzero &=& \frac{1}{p^2} \sum_{a, b \modd p} \lab(p^{m_1}),\\  \label{m2is1}
 \tQpone &=& -\frac{1}{p^2} \sum_{a, b \modd p}
\lab(p^{m_1}) \lab(p),\\   \label{m2is2}
\tQptwo &=& \frac{1}{p^2}
\sum_{\substack{a, b \modd p \\ p\nmid N_E}} \lab(p^{m_1}).
\end{eqnarray}

In the following theorem, we write a closed formula for $H(\alpha,\gamma)$ in terms
of the trace of the Hecke operators $T_p$, using the Eichler-Selberg Trace Formula,
following \cite{Young2006} (see Lemma \ref{lemma-fromMY} below). We first need some notation.
Let $Tr_j(p)$ denote the trace of the Hecke operator $T_p$ acting on
the space of weight $j$ holomorphic cusp forms on the full modular
group. The normalized trace $Tr^*_j(p)$ is given by
\begin{equation} \label{normalizedtrace}
Tr^*_j(p) = p^{(1-j)/2}Tr_j(p).
\end{equation}
We recall that we have that $Tr_{j}^*(p)=0$ for $j<12$.

Now $H(\alpha,\gamma)$ can be rewritten in terms of $Tr_{j}^*(p)$.

\begin{theorem} \label{factorH} Let $\alpha, \gamma \in \CC$ such that $\RRe(\alpha), \RRe(\gamma) > 0$, and
let $H$ be given by $\eqref{eq:Qstart}$. Then
\begin{align*}
&H(\alpha,\gamma)= E_2( \alpha, \gamma) E_3( \alpha, \gamma) \prod_{p>3}\bigg[1+\left(1-\frac{p^9-1}{p^{10}-1}\right)\bigg(\frac{1}{p^{1+2\gamma}}-
\frac{1}{p^{1+\alpha+\gamma}}\nonumber \\
&+\frac{p^{-(2+\alpha+\gamma)}-p^{-(2+2\gamma)}}{p^{2+2\alpha}-1}+
\left(\frac{p^{1+2\alpha+\gamma}-p^{1+\alpha+2\gamma}+p^{\gamma}-p^{\alpha}}
{p^{\frac{3}{2}+\alpha+2\gamma}}\right)
\summtenisevenpositiv\frac{Tr_{m_1+2}^*(p)}{p^{m_1(\frac{1}{2}+\alpha)}}\bigg)\bigg].
\end{align*}

Furthermore, $H$ has the form
\begin{equation*} \label{hzetaform}
H(\alpha, \gamma) = \frac{\zeta(1 + 2\gamma)}{\zeta(1 + \alpha +
\gamma)} A(\alpha, \gamma)
\end{equation*}
where $A(\alpha,\gamma)$ is holomorphic and non-zero for
$\RRe(\alpha), \RRe(\gamma) > -1/4$. We also have that $A(r,r)=1$ in this region.
\end{theorem}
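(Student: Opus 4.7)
The plan is to start from the Euler product for $H(\alpha,\gamma)$ displayed just above the theorem and compute, at each prime $p > 3$, the three inner sums built from $\tQpzero$, $\tQpone$, $\tQptwo$ defined in \eqref{m2is0}--\eqref{m2is2}. The key input is the Eichler--Selberg trace formula in the form used by Young (the referenced Lemma \ref{lemma-fromMY}): it expresses each complete sum $\frac{1}{p^2}\sum_{a,b \bmod p}\lab(p^{m_1})$ as an explicit rational function of $p$ plus the normalized trace $Tr^*_{m_1+2}(p)$, with the crucial vanishing $Tr^*_j(p)=0$ for $j<12$. For $m_1$ odd the average vanishes by the $a\mapsto a, b\mapsto -b$ symmetry, so only even exponents contribute, and the trace contribution first turns on at $m_1 = 10$.

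Next I would assemble the contributions from $m_2 = 0, 1, 2$ separately. For $m_2=0$, the sum $\sum_{m_1\ge 1}\tQpzero/p^{m_1(1/2+\alpha)}$ produces, via the usual geometric identity $\sum_{m_1 \text{ even}} p^{-m_1(1/2+\alpha)} = (1-p^{-(1+2\alpha)})^{-1}$, the rational piece $\frac{p^{-(2+\alpha+\gamma)}-p^{-(2+2\gamma)}}{p^{2+2\alpha}-1}$ (after being combined with the $m_2=1,2$ rational pieces below) together with the tail $\summtenisevenpositiv \frac{Tr_{m_1+2}^*(p)}{p^{m_1(1/2+\alpha)}}$ with the prefactor $\frac{p^{1+2\alpha+\gamma}-p^{1+\alpha+2\gamma}+p^{\gamma}-p^{\alpha}}{p^{3/2+\alpha+2\gamma}}$. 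The $m_2=1$ sum contributes $-\frac{1}{p^{1+\alpha+\gamma}}$ at the leading level and $m_2 = 2$ contributes $\frac{1}{p^{1+2\gamma}}$ (from the density $\psi_N(p)=1$ on the open set $p \nmid N_E$). Finally, the prefactor $(1-p^{-10})^{-1}$ is reorganized using $1-(1-p^{-10})^{-1}\cdot 1 = -\frac{1}{p^{10}-1}$, which rearranges into $1 - \frac{p^9-1}{p^{10}-1}$ once one accounts for the $m_1=m_2=0$ term being outside the sum. Collecting everything yields the closed formula.

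For the factorization, I would expand each Euler factor asymptotically in $p$: the visible correction $\frac{1}{p^{1+2\gamma}} - \frac{1}{p^{1+\alpha+\gamma}}$ exactly matches, modulo $O(p^{-(2+\cdots)})$, the Euler factor of $\zeta(1+2\gamma)/\zeta(1+\alpha+\gamma)$. Pulling out this ratio leaves an Euler product whose local factors are $1 + O(p^{-(2+2\min(\RRe\alpha,\RRe\gamma))})$; the $Tr^*$ tail contributes $O(p^{-(2+2\RRe\alpha)})$ because Deligne gives $|Tr^*_j(p)|\ll_j p^{1/2}$ and the sum begins at $m_1=10$. So $A(\alpha,\gamma)$ converges absolutely and is nonzero for $\RRe(\alpha), \RRe(\gamma) > -1/4$. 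To verify $A(r,r)=1$, I observe that at $\alpha=\gamma$ the numerator factors $\frac{1}{p^{1+2\gamma}}-\frac{1}{p^{1+\alpha+\gamma}}$, $p^{-(2+\alpha+\gamma)}-p^{-(2+2\gamma)}$, and $p^{1+2\alpha+\gamma}-p^{1+\alpha+2\gamma}+p^{\gamma}-p^{\alpha}$ all vanish termwise, so every bracketed local factor collapses to $1$; equivalently, this reflects the tautology $L(\tfrac12+r,E)/L(\tfrac12+r,E)=1$ at the level of the ratios recipe.

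The main obstacle is the bookkeeping in the second step: matching the three $m_2$-sums with the $m_1$-geometric series so that their remainders line up into the single compact rational prefactor $\frac{p^{1+2\alpha+\gamma}-p^{1+\alpha+2\gamma}+p^{\gamma}-p^{\alpha}}{p^{3/2+\alpha+2\gamma}}$ multiplying the trace sum, while keeping track of the $(1-p^{-10})^{-1}$ weight. The analytic statement about $A$ is then a short convergence argument, and the normalization $A(r,r)=1$ is immediate from the explicit formula.
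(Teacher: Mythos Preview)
Your overall approach matches the paper's: compute the three $m_2$-pieces via the Eichler--Selberg trace formula (Lemma~\ref{lemma-fromMY}, and its specializations in Lemmas~\ref{resultateins}--\ref{resultatdrei}), assemble the geometric series into the closed formula, pull out the zeta ratio, and verify $A(r,r)=1$ by direct substitution into the explicit Euler factors.

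One genuine error: the parity vanishing does \emph{not} follow from $b\mapsto -b$. That substitution gives $\lambda_{a,-b}(p) = \left(\tfrac{-1}{p}\right)\lambda_{a,b}(p)$ (replace $x\mapsto -x$ in the character sum), which only flips sign when $p\equiv 3\pmod 4$. The correct argument, as in the paper, is the quadratic twist $a\mapsto d^2 a$, $b\mapsto d^3 b$ with $d$ a nonresidue mod $p$, which gives $\lambda_{d^2a,d^3b}(p) = -\lambda_{a,b}(p)$ for every odd $p$; combined with \eqref{odd-even} this yields $\tQ(p^{m_1},p^{m_2})=0$ whenever $m_1+m_2$ is odd, covering all three cases $m_2=0,1,2$ at once. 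Two smaller points: your Deligne bound is misstated (one has $|Tr^*_j(p)|\le 2\dim S_j$, independent of $p$; the $p^{1/2}$ is spurious, though your conclusion on convergence survives), and the closed formula only covers $p>3$, so $E_2(r,r)=E_3(r,r)=1$ needs the separate Hecke-relation argument the paper gives in Lemma~\ref{generalEF}.
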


Before proving Theorem \ref{factorH}, we make some observations and
state some useful lemmata. First we consider the Chebyshev polynomials $U_n(x)$ appearing in
\eqref{heckerelations}
and their properties.
The polynomials $U_n(x)$ satisfy the recursion formula
\begin{eqnarray} \label{recursionCP}
U_{n+2}(x) - 2 x U_{n+1}(x) + U_n(x) = 0, \quad {\rm for}\:n \geq 0,
\end{eqnarray}
which is equivalent to the formal identity
\begin{eqnarray} \label{CP1}
\sum_{n \geq 0} U_{n}(x) t^n = \frac{1}{1 - 2 x t + t^2}.
\end{eqnarray}
Then the first few Chebyshev polynomials are $U_0(x)=1$, $U_1(x)=2x$, $U_2(x)= 4x^2-1$, $U_3(x) = 8x^3 - 4x$, etc. Also, the  Chebyshev polynomials
satisfy
\begin{eqnarray} \label{odd-even}
U_n(-x) = (-1)^n U_n(x),\end{eqnarray}
i.e., $U_n(x)$ is odd when $n$ is odd, and even when $n$ is even. We also define the coefficients $c_{\ell}(m_1, m_2)$ by
\begin{eqnarray*} U_{m_1}(x) U_{m_2}(x) =
\sum_{\ell \geq 0} c_{\ell}(m_1, m_2) U_\ell(x).\end{eqnarray*}

From the properties of the Chebyshev polynomials above we have that if $m_1 + m_2$ is odd (and $p > 2$), then $Q^*(p^{m_1}, p^{m_2})$ is 0. We can see this by first making the change of
variables $a=d^2a', b=d^3b'$ where $d$ is a quadratic nonresidue
modulo $p$. Then we have that $\lambda_{a,b}(p)=-\lambda_{a',b'}(p)$. Now if $m_2=0,1$ then
\begin{align*}
\tQ(p^{m_1},p^{m_2})&= \frac{(-1)^{m_2}}{p^2}\sum_{a,b \modd {p}}\lambda_{a,b}(p^{m_1})\lambda_{a,b}^{m_2}(p) \\
 &= \frac{(-1)^{m_2}}{p^2}\sum_{\substack{a,b \modd {p} \\ p \mid N_E}}\lambda_{a,b}^{m_1+m_2}(p) + \frac{(-1)^{m_2}}{p^2}\sum_{\substack{a,b \modd {p} \\ p \nmid N_E}}U_{m_1}\left(\frac{\lambda_{a,b}(p)}{2}\right)\lambda_{a,b}^{m_2}(p)\\
 & = \frac{(-1)^{m_1}}{p^2}\sum_{\substack{a',b' \modd {p} \\ p \mid N_E}}\lambda_{a',b'}^{m_1+m_2}(p)\\
  &+ \frac{(-1)^{m_1}}{p^2}\sum_{\substack{a',b' \modd {p} \\ p \nmid N_E}}U_{m_1}\left(\frac{\lambda_{a',b'}(p)}{2}\right)\lambda_{a',b'}^{m_2}(p)\\
 &=(-1)^{m_1+m_2}\tQ(p^{m_1},p^{m_2})
\end{align*}
where we have used the property \eqref{odd-even} of the Chebyshev polynomials.
If $m_2=2$ then \begin{eqnarray*}
\tQ(p^{m_1},p^2) &=& \frac{1}{p^2}\sum_{\substack{a,b \modd {p} \\ p \nmid N_E}} U_{m_1} \left( \frac{\lambda_{a,b}(p)}{2} \right)=\frac{(-1)^{m_1}}{p^2}\sum_{\substack{a',b' \modd {p} \\ p \nmid N_E}} U_{m_1} \left( \frac{\lambda_{a',b'}(p)}{2} \right)
\\ &=& (-1)^{m_1} \tQ(p^{m_1},p^2). \end{eqnarray*}

Hence, we have for $p > 3$, each Euler factor in $H(\alpha, \gamma)$ can be written as
\begin{equation}
\label{starthere}
 1+\frac{1}{1-p^{-10}}\left(\sum_{\substack{m_1\geq 2 \\ m_1 \:{\rm even}}} \frac{\tQpzero}{\denominatoralpha} + \summoneisodd
\frac{\tQpone}{p^{m_1(\frac{1}{2} + \alpha) + \frac{1}{2} + \gamma}}
+ \summoneiseven \frac{\tQptwo}{p^{m_1(\frac{1}{2} + \alpha) + 1+
2\gamma}}\right).
\end{equation}

We will use the following result from \cite{Young2010}.

\begin{lemma}[Proposition 4.2, \cite{Young2010}] \label{lemma-fromMY}
Let
\begin{equation}
Q^*(p^{m_1}, p^{m_2}) = \frac{1}{p^2} \sum_{a,b \pmod p} \lambda_{a,b}(p^{m_1}) \lambda_{a,b}(p^{m_2}).
\end{equation}
Then for $p > 3$ and $m_1 + m_2$ even and positive, we have
\begin{eqnarray}
Q^*(p^{m_1}, p^{m_2}) &=& c_0(m_1, m_2) \frac{p-1}{p} + \frac{p-1}{p^{2}} p^{-(m_1+m_2)/2} \\
&&- \sum_{\ell \geq 1} c_\ell(m_1, m_2) \left(
\frac{p-1}{p^{3/2}} Tr^*_{\ell + 2}(p)  + \frac{p-1}{p^{2}} p^{-\ell/2} \right) .
\end{eqnarray}
If $m_1+m_2$ is odd, or $p=2$, then $Q^*(p^{m_1}, p^{m_2}) = 0$.
\end{lemma}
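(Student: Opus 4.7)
The strategy is to isolate $Q^*(p^{m_1},p^{m_2})$ according to whether $p$ divides the conductor $N_E$, exploit the Hecke recursion on the non-singular part to reduce to sums of a single Chebyshev polynomial, and then invoke the Eichler--Selberg trace formula in the form due to Birch.

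First I dispose of the two vanishing cases. The $p=2$ case is immediate from the definition $\lab(2^k)=0$ recorded just after \eqref{weierstrassellipticcurve}. The parity vanishing for odd $m_1+m_2$ and $p>2$ is the same change of variables $a=d^2a'$, $b=d^3b'$ with $d\in\FF_p$ a quadratic non-residue that is used in the excerpt immediately before Lemma \ref{lemma-fromMY}; in the present notation it gives $Q^*(p^{m_1},p^{m_2}) = (-1)^{m_1+m_2} Q^*(p^{m_1},p^{m_2})$ via $U_n(-x)=(-1)^n U_n(x)$ and $\lab(p^k)=U_k(\lab(p)/2)$.

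For the main case $p>3$ with $m_1+m_2$ even and positive, I split
\[
Q^*(p^{m_1},p^{m_2}) = \frac{1}{p^2}\sum_{\substack{a,b\!\!\!\!\!\modd p\\ p\nmid N_E}}\lab(p^{m_1})\lab(p^{m_2}) \;+\; \frac{1}{p^2}\sum_{\substack{a,b\!\!\!\!\!\modd p\\ p\mid N_E}}\lab(p)^{m_1+m_2}.
\]
On the non-singular part I apply $\lab(p^{m_i})=U_{m_i}(\lab(p)/2)$ together with the Chebyshev product formula $U_{m_1}U_{m_2}=\sum_{\ell\geq 0}c_\ell(m_1,m_2)U_\ell$ to produce
\[
\frac{1}{p^2}\sum_{\substack{a,b\!\!\!\!\!\modd p\\ p\nmid N_E}}\lab(p^{m_1})\lab(p^{m_2}) = \sum_{\ell\geq 0} c_\ell(m_1,m_2)\, S_\ell(p), \qquad S_\ell(p) := \frac{1}{p^2}\sum_{\substack{a,b\!\!\!\!\!\modd p\\ p\nmid N_E}}U_\ell\!\left(\frac{\lab(p)}{2}\right).
\]

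To evaluate $S_\ell(p)$ for $\ell\geq 1$, I rewrite the sum over Weierstrass equations as a sum over isomorphism classes of elliptic curves, using that for $p>3$ each $E/\FF_p$ is represented by exactly $(p-1)/|\mathrm{Aut}(E)|$ pairs $(a,b)$ with $p\nmid\Delta$. Birch's form of the Eichler--Selberg trace formula then expresses $\sum_{E/\FF_p} U_\ell(a_E/(2\sqrt{p}))/|\mathrm{Aut}(E)|$ as a $\mathbb{Z}$-linear combination of $Tr_{\ell+2}(p)$ and an elementary term of size $p^{-\ell/2}$; after the normalisation \eqref{normalizedtrace} and multiplication by $(p-1)/p^2$ this yields
\[
S_\ell(p) = -\frac{p-1}{p^{3/2}}Tr^*_{\ell+2}(p) - \frac{p-1}{p^2}p^{-\ell/2} \qquad (\ell\geq 1),
\]
while $S_0(p)$ collapses to the count of non-singular pairs divided by $p^2$, contributing the leading $\frac{p-1}{p}$ coefficient of the $c_0$ term. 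Finally the singular contribution is computed directly: for $p>3$ the locus $4a^3+27b^2\equiv 0\pmod p$ consists of the cuspidal pair $(0,0)$, on which $\lab(p)=0$ gives nothing because $m_1+m_2>0$, together with nodal pairs on which $\lab(p)=\pm 1/\sqrt{p}$; since $m_1+m_2$ is even each such pair contributes $p^{-(m_1+m_2)/2}$, and counting the nodal pairs produces precisely the term $\frac{p-1}{p^2}p^{-(m_1+m_2)/2}$ of the statement. Assembling the three pieces gives the claimed closed form. The principal obstacle is the bookkeeping of constants through the passage from Weierstrass pairs to isomorphism classes and through the various normalisations of the Eichler--Selberg formula; at this step I would follow the calculation carried out in \cite{Young2010} rather than redo it from scratch.
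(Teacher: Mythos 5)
This lemma is imported verbatim from Young (\cite{Young2010}, Proposition 4.2); the paper supplies no proof, so there is no internal argument to compare against. The only adjacent piece in the paper is the parity-vanishing computation just before the lemma statement (the non-residue substitution $a\mapsto d^2 a$, $b\mapsto d^3 b$), which you reproduce correctly, together with the observation that $\lab(2^k)=0$ for $k\geq 1$ handles $p=2$.

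Your reconstruction of the main case takes the expected route and it is correct in outline: split at $p\mid N_E$, evaluate the singular contribution directly from the parametrization $(a,b)=(-3e^2,2e^3)$ with $e\in\FF_p$ (the cusp $e=0$ gives $0$ since $m_1+m_2>0$, and the $p-1$ nodes each give $\lab(p)^{m_1+m_2}=p^{-(m_1+m_2)/2}$ because $\lab(p)=\pm p^{-1/2}$ and $m_1+m_2$ is even), and on the nonsingular part expand $U_{m_1}U_{m_2}=\sum_\ell c_\ell U_\ell$ and feed $S_\ell(p)$ into the Birch/Eichler--Selberg trace formula after converting Weierstrass pairs to weighted isomorphism classes. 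The pieces assemble to the stated formula: $S_0(p)=(p-1)/p$ and the singular sum give the first two terms, and for instance the $\ell=2$ case can be checked by hand, $S_2(p)=-\frac{p-1}{p^{3}}$, which agrees with $-\frac{p-1}{p^{3/2}}Tr^*_4(p)-\frac{p-1}{p^2}p^{-1}$ using $Tr^*_4(p)=0$. Two small remarks: the displayed identity for $S_\ell(p)$ should be restricted to even $\ell$ (for odd $\ell$, $S_\ell(p)=0$ by the non-residue symmetry while your right-hand side does not vanish), which is harmless here since $c_\ell(m_1,m_2)=0$ for odd $\ell$ when $m_1+m_2$ is even, but is worth stating; and the weighted orbit count $(p-1)/|\mathrm{Aut}(E)|$ requires care at $j=0,1728$, which you appropriately defer to \cite{Young2010}. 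As a sketch that cites Young for the final trace-formula bookkeeping, this is entirely consistent with how the paper itself treats the result.
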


\begin{lemma}\label{resultateins} Let $p > 3$ and $m_1 \geq 2$ even. Then
\begin{equation}
\tQpzero =
-\frac{p-1}{p^{3/2}} Tr^*_{m_1+2}(p).
\end{equation}
\end{lemma}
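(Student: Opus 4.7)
The plan is to deduce Lemma \ref{resultateins} as an immediate specialization of Lemma \ref{lemma-fromMY} to the case $m_2 = 0$. Observe first that since $\lab(p^0) = \lab(1) = 1$, we have
$$\tQpzero = \frac{1}{p^2}\sum_{a,b \modd p} \lab(p^{m_1}) = Q^*(p^{m_1}, p^0),$$
so the hypothesis of Lemma \ref{lemma-fromMY} applies directly with $m_2 = 0$: $m_1 + m_2 = m_1$ is even and positive by the assumption $m_1 \geq 2$ even, and $p > 3$.

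The only remaining step is to identify the coefficients $c_\ell(m_1, 0)$ appearing in the statement. Since $U_0(x) = 1$, the defining expansion $U_{m_1}(x) U_0(x) = \sum_{\ell \geq 0} c_\ell(m_1, 0) U_\ell(x)$ reduces to $U_{m_1}(x) = \sum_\ell c_\ell(m_1, 0) U_\ell(x)$, so by linear independence of the Chebyshev polynomials $c_\ell(m_1, 0) = \delta_{\ell, m_1}$ (Kronecker delta). In particular $c_0(m_1, 0) = 0$ because $m_1 \geq 2$, and only the single term $\ell = m_1$ survives the sum over $\ell \geq 1$ in Lemma \ref{lemma-fromMY}. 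After substitution, the formula collapses to
$$\tQpzero = \frac{p-1}{p^2} p^{-m_1/2} - \left( \frac{p-1}{p^{3/2}} Tr^*_{m_1+2}(p) + \frac{p-1}{p^2} p^{-m_1/2} \right),$$
in which the two $\tfrac{p-1}{p^2}p^{-m_1/2}$ contributions cancel, leaving exactly $-\tfrac{p-1}{p^{3/2}} Tr^*_{m_1+2}(p)$, as claimed.

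I do not anticipate any real obstacle, since Lemma \ref{lemma-fromMY} does all the heavy lifting (the Eichler–Selberg trace formula input from \cite{Young2010} is already absorbed into its statement). The only care needed is (i) the observation that $m_2 = 0$ simplifies the Chebyshev product to a single basis element, forcing $c_\ell(m_1,0)$ to be a Kronecker delta, and (ii) noticing the cancellation of the two elementary terms. Consequently the proof will be short.
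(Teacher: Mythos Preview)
Your proposal is correct and follows exactly the paper's approach: specialize Lemma \ref{lemma-fromMY} at $m_2=0$, use $c_\ell(m_1,0)=\delta_{\ell,m_1}$, and note the cancellation of the two $\tfrac{p-1}{p^2}p^{-m_1/2}$ terms. The paper's own proof is a one-line version of what you wrote, so you have simply unpacked the details.
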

\begin{proof} This follows immediately from \eqref{m2is0} by specializing Lemma \ref{lemma-fromMY} since
$c_\ell(m_1, 0) = 1$ for $\ell=m_1$ and $0$ otherwise. \end{proof}

\begin{lemma}\label{resultatzwei} Let $p > 3$ and $m_1 \geq 1$ odd.
Then, for $m_1 \geq 3$,
\begin{eqnarray*}
\tQpmtwoisone = \frac{p-1}{p^2} p^{-(m_1-1)/2} + \frac{p-1}{p^{3/2}} \left( Tr^*_{m_1+1}(p)+Tr^*_{m_1+3}(p) \right)
\end{eqnarray*}
and
\begin{eqnarray*}
\tilde{Q}^*(p,p) =  \frac{1-p}{p} .
\end{eqnarray*}
\end{lemma}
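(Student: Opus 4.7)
The plan is to derive this directly from Lemma \ref{lemma-fromMY} applied with $m_2=1$, after expanding the product $U_{m_1}(x)U_1(x)$ in the Chebyshev basis.

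First I would observe that by the definition \eqref{definitionofmu} of $\mu_{a,b}$ at a prime, we have $\mu_{a,b}(p) = -\lambda_{a,b}(p)$, so comparing \eqref{m2is1} with the definition of $Q^*(p^{m_1},p^{m_2})$ in Lemma \ref{lemma-fromMY}, we get
\[
\tQpmtwoisone \;=\; -\,Q^*(p^{m_1},p).
\]
Since $m_1$ is odd, $m_1+1$ is even and positive, so Lemma \ref{lemma-fromMY} applies. The key input I need is the expansion
\[
U_{m_1}(x)\,U_1(x) \;=\; 2x\,U_{m_1}(x) \;=\; U_{m_1+1}(x) + U_{m_1-1}(x),
\]
which follows immediately from the recursion \eqref{recursionCP}. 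Hence $c_\ell(m_1,1)=1$ for $\ell=m_1-1$ and $\ell=m_1+1$, and is zero otherwise.

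For $m_1\geq 3$, we have $m_1-1\geq 2$, so in particular $c_0(m_1,1)=0$ and the constant-term contribution from Lemma \ref{lemma-fromMY} vanishes. Substituting into the formula of Lemma \ref{lemma-fromMY} and collecting the two surviving indices $\ell=m_1\pm 1$, the $\frac{p-1}{p^2}p^{-(m_1+1)/2}$ diagonal term from the lemma exactly cancels the $\ell=m_1+1$ contribution of $\frac{p-1}{p^{2}}p^{-\ell/2}$, leaving only the $\ell=m_1-1$ piece $\frac{p-1}{p^2}p^{-(m_1-1)/2}$ and the two trace terms $\frac{p-1}{p^{3/2}}(Tr^*_{m_1+1}(p)+Tr^*_{m_1+3}(p))$. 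Flipping sign to pass from $Q^*$ to $\tQpmtwoisone$ yields the stated identity.

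The case $m_1=1$ has to be treated separately because then $m_1-1=0$, so $c_0(1,1)=1$ and the leading term $c_0(m_1,1)\frac{p-1}{p}$ in Lemma \ref{lemma-fromMY} is no longer zero. Substituting $c_0(1,1)=c_2(1,1)=1$ gives
\[
Q^*(p,p) \;=\; \frac{p-1}{p} + \frac{p-1}{p^{3}} - \frac{p-1}{p^{3/2}}Tr^*_4(p) - \frac{p-1}{p^{3}},
\]
and since $Tr^*_j(p)=0$ for $j<12$, the last two terms cancel, leaving $Q^*(p,p)=\frac{p-1}{p}$, so $\tilde Q^*(p,p)=\frac{1-p}{p}$. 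The only real obstacle is careful bookkeeping of the $p^{-\ell/2}$ cancellation in the $m_1\geq 3$ case; everything else is a direct substitution once the Chebyshev product is written out.
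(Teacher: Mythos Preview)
Your proof is correct and follows essentially the same route as the paper: identify $\tQpmtwoisone=-Q^*(p^{m_1},p)$, use the recursion \eqref{recursionCP} to get $c_\ell(m_1,1)=1$ for $\ell=m_1\pm 1$ and zero otherwise, substitute into Lemma~\ref{lemma-fromMY}, and observe the $p^{-(m_1+1)/2}$ cancellation (with the extra use of $Tr^*_4(p)=0$ when $m_1=1$). Your phrasing ``the last two terms cancel'' in the $m_1=1$ case is slightly imprecise---it is the second and fourth terms $\pm\frac{p-1}{p^3}$ that cancel, while the trace term vanishes separately---but the computation is right.
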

\begin{proof} From \eqref{m2is1}, we have that $\tQpmtwoisone = -Q^*(p^{m_1}, p),$ and then
it follows immediately by specializing Lemma \ref{lemma-fromMY} that
\begin{align} \nonumber
\tQpmtwoisone & = -c_0(m_1,1) \frac{p-1}{p} + \sum_{\ell \geq 1} c_{\ell}(m_1,1) \left[\frac{p-1}{p^{3/2}} Tr^*_{\ell+2}(p) + \frac{p-1}{p^2}p^{-\ell/2}\right]\\ \label{relation1}
& - \frac{p-1}{p^2} p^{-(m_1+1)/2}.
\end{align}
For $m_1 \geq
1$, the recursion relation \eqref{recursionCP} gives
\begin{align} \nonumber
U_{m_1}(x) U_1(x) = 2x U_{m_1}(x) = U_{m_1 + 1}(x) + U_{m_1 - 1}(x)
\end{align}
and
\begin{equation}
c_{\ell}(m_1, 1) = \begin{cases}
1 \quad\quad\mbox{~for~} \ell = m_1 -1, m_1 + 1,\\
0 \quad\quad \mbox{~otherwise}.
\end{cases}
\end{equation}
Replacing in \eqref{relation1}, this gives the result for $m_1 \geq 3$. For $m_1 = 1$, we also use
the fact that $Tr^*_{4}(p)=0$.
\end{proof}

\begin{lemma}\label{resultatdrei} If $p>3$, and $m_1 \geq 2$ is even, then
\begin{align}
\tQptwo =
-\frac{p-1}{p^{3/2}} Tr^*_{m_1+2}(p) - \frac{p-1}{p^2}p^{-m_1/2}.
\end{align}
Furthermore,
$$\tilde{Q}^*(p^0, p^2) = \frac{(p-1)}{p}.$$
\end{lemma}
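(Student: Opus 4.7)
The plan is to relate $\tQptwo$ to $\tQpzero$ (which we already control via Lemma \ref{resultateins}) by subtracting off the contribution from pairs $(a,b)$ where $p$ divides the conductor, since the only difference between \eqref{m2is0} and \eqref{m2is2} is the restriction $p \nmid N_E$ in the latter. Writing
\begin{equation*}
\tQptwo \;=\; \tQpzero \;-\; \frac{1}{p^2}\sum_{\substack{a,b \modd p \\ p \mid N_E}} \lab(p^{m_1}),
\end{equation*}
I can apply Lemma \ref{resultateins} to the first term and focus on evaluating the ``bad reduction'' sum.

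To handle the bad sum, I would count the pairs $(a,b) \modd p$ for which $p \mid -16(4a^3+27b^2)$. Since $p>3$, the locus $4a^3+27b^2\equiv 0 \modd p$ is the standard cuspidal cubic and admits the parametrization $(a,b) = (-3t^2, 2t^3)$ for $t \in \FF_p$; the map $t \mapsto (a,b)$ is injective because $p>3$, yielding exactly $p$ bad pairs. Among these, only $(a,b)=(0,0)$ (from $t=0$) gives additive reduction (the cubic $y^2 = x^3$ has a cusp), and the remaining $p-1$ values of $t$ give a nodal cubic, i.e.\ multiplicative reduction. At additive primes $\lab(p)=0$ so $\lab(p^{m_1})=0$ for $m_1\geq 1$, while at multiplicative primes $\lab(p) = \pm 1/\sqrt{p}$, so using that $m_1$ is even we get $\lab(p^{m_1}) = \lab(p)^{m_1} = p^{-m_1/2}$ independently of the sign. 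Therefore
\begin{equation*}
\frac{1}{p^2}\sum_{\substack{a,b \modd p \\ p \mid N_E}} \lab(p^{m_1}) \;=\; \frac{p-1}{p^2}\, p^{-m_1/2},
\end{equation*}
and combining with Lemma \ref{resultateins} yields the stated formula.

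For the separate case $m_1=0$, I would simply note that $\tilde{Q}^*(p^0,p^2) = p^{-2}\#\{(a,b) \modd p : p \nmid N_E\} = p^{-2}(p^2-p) = (p-1)/p$, using the same count of $p$ bad pairs.

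The main obstacle is the normalization subtlety: one must remember that $\lab(p)$ is the $L$-function coefficient divided by $\sqrt{p}$, so at primes of multiplicative reduction $\lab(p) = \pm 1/\sqrt{p}$ rather than $\pm 1$; this is precisely what produces the $p^{-m_1/2}$ factor that would otherwise be missing. Apart from this, everything reduces to an elementary count of singular Weierstrass models over $\FF_p$.
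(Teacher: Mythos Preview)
Your approach is essentially identical to the paper's: the same decomposition $\tQptwo = \tQpzero - p^{-2}\sum_{p\mid N_E}\lab(p^{m_1})$, the same appeal to Lemma~\ref{resultateins}, and the same evaluation of the bad-reduction sum via the parametrization of the discriminant locus. The paper simply cites \cite{Young2006} for that last step, whereas you spell out the parametrization $(a,b)=(-3t^2,2t^3)$ and the additive/multiplicative split explicitly; both arrive at the identical count $(p-1)p^{-m_1/2}$ and the same treatment of the case $m_1=0$.
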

\begin{proof} Let $m_1 \geq 2$. From \eqref{m2is2}, we have that
$$
\tQptwo = \tQpzero - \frac{1}{p^2} \sum_{{a,b \mod p} \atop {p \mid N_E}} \lambda_{a,b}(p^m_1).$$
For $p > 3$, one shows that
$$\sum_{{a,b \mod p} \atop {p \mid N_E}} \lambda_{a,b}(p^m_1) = p^{-m_1/2} (p-1)$$
by parameterizing all pairs $(a,b) \in \FF_p^2$ such that $\Delta \equiv 0 \mod p$ (see
the proof of Proposition 4.2 in \cite{Young2006}). The result then follows from Lemma \ref{resultateins}.
If $m_1=0$, then
$$\tilde{Q}^*(p^0, p^2) = \frac{1}{p^2} \sum_{{a,b \mod p} \atop {p \nmid N_E}} 1 = \frac{p(p-1)}{p^2}.$$
\end{proof}

\begin{proof}[Proof of Theorem \ref{factorH}]

Starting from $\eqref{starthere}$, we have
\begin{align*}
H(\alpha,\gamma)&= E_2(\alpha, \gamma) E_3 (\alpha, \gamma) \prod_{p>3} \Bigg[1+(1-p^{-10})^{-1}\Bigg(\summoneisevenpositiv
\frac{\tQpzero}{p^{m_1(\frac{1}{2}+\alpha)}} \\
&+\frac{1}{p^{\frac{1}{2}+\gamma}}\summoneisoddpositiv\frac{\tQpone}{p^{m_1(\frac{1}{2}+\alpha)}}+\frac{\widetilde{Q}^*(1,p^2)}{p^{1+2\gamma}}
+\frac{1}{p^{1+2\gamma}}\summoneisevenpositiv\frac{\tQptwo}{p^{m_1(\frac{1}{2}+\alpha)}}\Bigg)\Bigg].
\end{align*}
Let $p > 3$. We
consider each of the four terms in the Euler factors $E_p(\alpha, \gamma)$ separately. From Lemma \ref{resultateins}, we have that
\begin{equation}
\summoneisevenpositiv \frac{\tQpzero}{p^{m_1(\frac{1}{2}+\alpha)}}=
\frac{-(p-1)}{p^{\frac{3}{2}}}\summoneisevenpositiv
\frac{Tr_{m_1+2}^*(p)}{p^{m_1(\frac{1}{2}+\alpha)}}.
\end{equation}
From Lemma \ref{resultatzwei}, we have that
\begin{eqnarray*}
\frac{1}{p^{1/2 + \gamma}} \sum_{\substack{m_1 \geq 1 \\m_1 \rm odd}}  \frac{\tQpone}{p^{m_1(\frac{1}{2}+\alpha)}}
&=&
\frac{-(p-1)}{p^{2 + \gamma+\alpha}} +
\sum_{\substack{m_1 \geq 3 \\m_1 \rm odd}}  \frac{p-1}{p^{2+\gamma+m_1(1+\alpha)}} \\
&&  + \frac{p-1}{p^{2 + \gamma}} \sum_{\substack{m_1 \geq 9 \\m_1 \rm odd}} \frac{
Tr_{m_1 + 1}^*(p) + Tr_{m_1+3}^*(p)}{p^{m_1(1/2 + \alpha)}} .
\end{eqnarray*}

We compute that
\begin{eqnarray*}
\sum_{\substack{m_1 \geq 3 \\m_1 \rm odd}}  \frac{p-1}{p^{2+\gamma+m_1(1+\alpha)}}
= \frac{p-1}{p^{3 + \alpha+\gamma}} \sum_{m_1 \geq 1} \left( \frac{1}{p^{1+\alpha}} \right)^{2 m_1}
= \frac{p-1}{p^{3 + \alpha+\gamma}(p^{2 + 2 \alpha}-1)}
\end{eqnarray*}

Finally, from Lemma \ref{resultatdrei} we have that
\begin{equation}
\frac{\widetilde{Q}^*(1,p^2)}{p^{1+2\gamma}}=\frac{p-1}{p^{2+2\gamma}}
\end{equation}
and
\begin{eqnarray*}
\summoneisevenpositiv\frac{\tQptwo}{p^{m_1(\frac{1}{2}+\alpha)+1+2\gamma}}&=&
\frac{-(p-1)}{p^{\frac{5}{2}+2\gamma}}
 \summoneisevenpositiv\frac{Tr_{m_1+2}^*(p)}{p^{m_1(\frac{1}{2}+\alpha)}}-\frac{p-1}{p^{3+2\gamma}}
 \summoneisevenpositiv
 \frac{1}{p^{m_1(1+\alpha)}}\\
 &=&\frac{-(p-1)}{p^{\frac{5}{2}+2\gamma}}\summoneisevenpositiv\frac{Tr_{m_1+2}^*(p)}{p^{m_1(\frac{1}{2}+\alpha)}}-
 \frac{p-1}{p^{3+2\gamma}(p^{2+2\alpha} - 1)}.
\end{eqnarray*}

Since $Tr_{j}^*(p)=0$ for $j<12$, summing the four terms above and collecting terms gives
\begin{align*}
H(\alpha,\gamma)&=E_2(\alpha, \gamma) E_3 (\alpha, \gamma)\prod_{p>3} \Bigg[1+(1-p^{-10})^{-1}\Bigg(\frac{p-1}{p^{2+2\gamma}}-\frac{p-1}{p^{2+\alpha+\gamma}}\\
&+\frac{p-1}{p^{3+\gamma} (p^{2 + 2 \alpha} - 1)} \left( \frac{1}{p^\alpha} - \frac{1}{p^\gamma} \right)
-\frac{p-1}{p^{\frac{3}{2}}}\left(\frac{1}{p^{1+2\gamma}}+1\right)
\summtenisevenpositiv\frac{Tr_{m_1+2}^*(p)}{p^{m_1(\frac{1}{2}+\alpha)}}\\
&+\frac{p-1}{p^{2+\gamma}}\summnineisoddpositiv\frac{Tr_{m_1+1}^*(p)+Tr_{m_1+3}^*(p)}{p^{m_1(\frac{1}{2}+\alpha)}}
\Bigg)\Bigg],
\end{align*}
and factoring out $\frac{p-1}{p}$ gives
\begin{align}
H(\alpha,\gamma)&=E_2(\alpha, \gamma) E_3 (\alpha, \gamma)\prod_{p>3} \Bigg[1+\left(1-\frac{p^9-1}{p^{10}-1}\right)\Bigg(\frac{1}{p^{1+2\gamma}}-\frac{1}{p^{1+\alpha+\gamma}}\nonumber \\
&+\frac{1}{p^{2+\gamma} (p^{2 + 2 \alpha} - 1)} \left( \frac{1}{p^\alpha} - \frac{1}{p^\gamma} \right)-\left(\frac{1}{p^{\frac{3}{2}+2\gamma}}+\frac{1}{p^{\frac{1}{2}}}\right)
\summtenisevenpositiv\frac{Tr_{m_1+2}^*(p)}{p^{m_1(\frac{1}{2}+\alpha)}}\nonumber \\
&+\summnineisoddpositiv\frac{Tr_{m_1+1}^*(p)+Tr_{m_1+3}^*(p)}{p^{m_1(\frac{1}{2}+\alpha)+1+\gamma}}
\Bigg) \Bigg].\label{cuteform}
\end{align}

Finally, we compute that
\begin{align*}
&\left(\frac{-1}{p^{\frac{3}{2}+2\gamma}}-\frac{1}{p^{\frac{1}{2}}}\right)
\summtenisevenpositiv\frac{Tr_{m_1+2}^*(p)}{p^{m_1(\frac{1}{2}+\alpha)}}
+\summnineisoddpositiv\frac{Tr_{m_1+1}^*(p)+Tr_{m_1+3}^*(p)}{p^{m_1(\frac{1}{2}+\alpha)+1+\gamma}}
\\
=&\left(\frac{p^{1+2\alpha+\gamma}+p^{\gamma}-p^{\alpha}-p^{1+\alpha+2\gamma}}
{p^{\frac{3}{2}+\alpha+2\gamma}}\right)\summtenisevenpositiv\frac{Tr_{m_1+2}^*(p)}{p^{m_1(\frac{1}{2}+\alpha)}}.
\end{align*}

Now, looking at the contributions of the Euler factors in \eqref{cuteform}, the term
${p^{-(1+2\gamma)}}$
contributes a pole,
and the term
${-p^{-(1+\alpha+\gamma)}}$ contributes a zero, and we
can write
\begin{equation} \label{definitionofA}
H(\alpha,\gamma) = \frac{\zeta(1+2\gamma)}{\zeta(1+\alpha+\gamma)}
A(\alpha,\gamma)
\end{equation}
as required where $A(\alpha, \gamma)$ converges uniformly and absolutely
for $\RRe(\alpha), \RRe(\gamma) > -1/4$ since $|Tr_{j}^*(p)|\leq 1$.

Finally, by setting $\alpha=\gamma=r$, we also have that
\begin{eqnarray}
A(r,r)=H(r,r) = 1,
\end{eqnarray}
and in fact, each of the Euler factors $E_p(r,r)=1$.
If $p > 3$, it can be seen directly by setting $\alpha=\gamma=r$
in
$\eqref{cuteform}$, as
\begin{eqnarray*}
&& \bigg(\frac{1}{p^{1+2r}}-\frac{1}{p^{1+2r}}+\frac{p^{-(2+2r)}-p^{-(2+2r)}}{p^{2+2r}-1} \\
&& \quad\quad +\left(\frac{p^{1+3r}-p^{1+3r}+p^{r}-p^{r}}{p^{\frac{3}{2}+3r}}\right)
\summtenisevenpositiv\frac{Tr_{m_1+2}^*(p)}{p^{m_1(\frac{1}{2}+r)}}\bigg) = 0.
\end{eqnarray*}
For $p=2,3$ we use a more general result, and show directly that for all Euler factors $E_p(\alpha, \beta)$
as given by \eqref{eulerfactor}, that $E_p(r,r)=1$ because of the Hecke relations.
This is done in Lemma \ref{generalEF} of the next section for the Euler factors in the second family, where we have no closed
formula, and it shows the result for $E_2(r, r)$ and $E_3(r, r)$ by adapting the proof for this family.  \end{proof}


Then, by replacing each $(m_1, m_2)$-summand in \reff{eq:ratiostepone} by its average over the family, we
replace $R_1(\alpha, \gamma)$ by
$H(\alpha,\gamma)$ as given by \eqref{definitionofA}.
We also set
\begin{equation} \label{definitionofY}
Y(\alpha,\gamma) :=\frac{\zeta(1+2\gamma)}{\zeta(1+\alpha+\gamma)}.
\end{equation}

We now consider the sum $R_2(\alpha, \gamma)$ of \eqref{eq:ratiostepone-dual} coming from the dual sum of the approximate functional equation.
Working completely similarly, replacing each $(m_1, m_2)$-summand in \reff{eq:ratiostepone} by its average over the family,
we rewrite $R_2(\alpha, \gamma)$ as
\begin{equation}
\omega_E
\left(\frac{\sqrt{N}}{2\pi}\right)^{-2\alpha}\frac{\Gamma(1-\alpha)}{\Gamma(1+\alpha)}
Y(-\alpha,\gamma)A(-\alpha,\gamma).
\end{equation}

Then, replacing each summand in \reff{eq:ratio},  we get  the ratios conjecture for our family.

\begin{conjecture}[Ratios Conjecture] \label{ratiosconjecture}  Let $\varepsilon >0$.
Let $\alpha, \gamma \in \CC$ such that $\RRe(\alpha) > -1/4$, $\RRe(\gamma) \gg \frac{1}{\log{X}}$ and $\IIm(\alpha), \IIm(\gamma) \ll_\varepsilon X^{1-\varepsilon}$. Then
\begin{align*}
& \frac{1}{|\FX|} \sum_{E \in \FX} \frac{L(\frac{1}{2} +
\alpha,E)}{L(\frac{1}{2} + \gamma,E)}\\
& = \frac{1}{|\FX|} \sum_{E \in \FX} \left[Y(\alpha,\gamma)A(\alpha,\gamma) + \omega_E \left(\frac{\sqrt{N_E}}{2\pi}\right)^{-2\alpha}\frac{\Gamma(1-\alpha)}{\Gamma(1+\alpha)} Y(-\alpha,\gamma)A(-\alpha,\gamma) \right] \\
& + O(X^{-1/2+\varepsilon})
\end{align*}
where $Y(\alpha,\gamma)$ is defined in \reff{definitionofY} and
$A(\alpha,\gamma)$ in \reff{definitionofA}.
\end{conjecture}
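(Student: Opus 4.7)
The plan is to assemble the conjecture by running the Conrey--Farmer--Zirnbauer recipe on the ratio \eqref{eq:ratio}, using the pieces that the paper has already set up. First, I would replace $L(1/2+\alpha,E)$ in the numerator by its approximate functional equation \eqref{AFE}, which splits it into the principal sum \eqref{eq:approximate-princ} and the dual sum \eqref{eq:approximate-dual}, while the denominator $1/L(1/2+\gamma,E)$ is expanded as the full Dirichlet series \eqref{eq:Moebius} with coefficients $\mu_{a,b}(m_2)$. Multiplying through and interchanging the resulting sums decomposes the ratio, up to questions of convergence and smooth cutoffs that the recipe deliberately ignores, into the two contributions $R_1(\alpha,\gamma)$ and $R_2(\alpha,\gamma)$ defined in \eqref{eq:ratiostepone} and \eqref{eq:ratiostepone-dual}.

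Next, for $R_1(\alpha,\gamma)$, I would apply the recipe's key heuristic step: replace each coefficient $\lambda_{a,b}(m_1)\mu_{a,b}(m_2)$ by its expected value over the family, which is exactly $\widetilde{Q}^*_{r,t}(m_1,m_2)$ by Lemma \ref{average}. This turns $R_1(\alpha,\gamma)$ into the arithmetic factor $H(\alpha,\gamma)$ defined in \eqref{eq:Qstart}, and Theorem \ref{factorH} then expresses it as $Y(\alpha,\gamma) A(\alpha,\gamma)$ with $Y$ carrying the zeta pole at $\gamma=-\alpha$ and $A$ holomorphic in the relevant region.

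For $R_2(\alpha,\gamma)$, I would keep the curve-dependent factor $\omega_{E_{a,b}}X_{E_{a,b}}(\tfrac{1}{2}+\alpha)$ outside the $(m_1,m_2)$-sum, since it involves the root number and conductor of the individual curve. The inner double sum is identical in shape to the one treated above but with $\alpha$ replaced by $-\alpha$, so the same averaging step and Theorem \ref{factorH} give $H(-\alpha,\gamma)=Y(-\alpha,\gamma)A(-\alpha,\gamma)$. Using the explicit form of $X_{E_{a,b}}$ in \eqref{definitionofXE} yields the factor $\left(\frac{\sqrt{N_E}}{2\pi}\right)^{-2\alpha}\frac{\Gamma(1-\alpha)}{\Gamma(1+\alpha)}$ multiplying the dual-sum contribution, and summing over $E\in\FX$ gives the second term in the conjectured main term.

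Finally, combining the two pieces produces the asserted identity. The error term $O(X^{-1/2+\varepsilon})$ is the standard square-root-cancellation guess attached to the recipe, based on the expectation that the discarded cross-terms and tail contributions behave like sums of $|\FX|\asymp X^{5/6}$ essentially independent random quantities of size $O(1)$; this is the step where rigor is abandoned. The genuine obstacle, and the reason this is stated as a conjecture rather than a theorem, is exactly this: justifying the interchange of summation, the replacement of each individual $\lambda_{a,b}(m_1)\mu_{a,b}(m_2)$ by its family average uniformly in $(m_1,m_2)$, and the extension of the truncated AFE sums to full Dirichlet series, all with a combined error of $O(X^{-1/2+\varepsilon})$. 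None of these steps can currently be made rigorous, but the recipe of \cite{CFZ}, now applied to the family $\FX$ using Lemma \ref{average} and Theorem \ref{factorH}, produces the stated formula unambiguously.
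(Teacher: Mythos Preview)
Your derivation is correct and follows exactly the route taken in the paper: you apply the CFZ recipe by splitting the numerator via the approximate functional equation into principal and dual pieces, expand the denominator as a Dirichlet series, replace each $(m_1,m_2)$-summand by its family average using Lemma~\ref{average}, and then invoke Theorem~\ref{factorH} (with $\alpha\mapsto -\alpha$ for the dual piece) to obtain the $Y\cdot A$ factorisation. The only minor difference is expository: the paper simply records the $O(X^{-1/2+\varepsilon})$ error as part of the standard ratios-conjecture template without your heuristic justification in terms of square-root cancellation, but the substance is identical.
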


We remark that the error term  $O(X^{-1/2+\varepsilon})$ is part of the statement of the ratios conjectures, and the power
on $X$ is not suggested by any of the steps used in arriving at the main expression in Conjecture \ref{ratiosconjecture}. The lower bound for $\RRe(\gamma)$ and the upper bound for $\IIm(\alpha), \IIm(\gamma)$ are also part of the statement of the ratios conjectures, and should be thought as reasonable conditions under which the
Conjecture \ref{ratiosconjecture} should hold. For more details, we refer the reader
to \cite{conrey-snaith} (see for example the conditions (2.11b) and (2.11c) on page 6).
Ignoring issues about the error term and uniformity, there should of course be a condition of the type $\RRe(\gamma) \geq \delta$ for some $\delta>0$.

To get the one-level density for our family, we have to differentiate the result of Conjecture \ref{ratiosconjecture}
with respect to $\alpha$ and use \eqref{eq:Cauchy}. We then obtain

\begin{theorem} \label{ratiostheorem} Let $\varepsilon > 0$, and $r \in \CC$. Assuming the Ratios Conjecture \ref{ratiosconjecture},  $\RRe(r) \gg \frac{1}{\log{X}}$ and
$\IIm(r) \ll X^{1-\varepsilon}$,
we have
\begin{align*} \nonumber
& \frac{1}{|\FX|} \sum_{E \in \FX} \frac{L'(\frac{1}{2} + r,E)}{L(\frac{1}{2} + r,E)} = \frac{1}{|\FX|} \sum_{E \in \FX}  \bigg[-\frac{\zeta'}{\zeta}(1+2r) + A_{\alpha}(r,r)\\
 & - \omega_E\left(\frac{\sqrt{N_E}}{2\pi}\right)^{-2r}\frac{\Gamma(1-r)}{\Gamma(1+r)} \zeta(1+2r)A(-r,r) \bigg] + O(X^{-1/2+\varepsilon}),
\end{align*}
where $A_{\alpha}(r,r)$ is defined in \reff{definitionofAone}.
\end{theorem}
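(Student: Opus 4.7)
The plan is to differentiate the Ratios Conjecture \ref{ratiosconjecture} with respect to $\alpha$ and then specialize to $\alpha = \gamma = r$, using the identity
\[
\frac{\partial}{\partial \alpha} \frac{L(\tfrac{1}{2}+\alpha, E)}{L(\tfrac{1}{2}+\gamma, E)}\bigg|_{\alpha=\gamma=r} = \frac{L'(\tfrac{1}{2}+r, E)}{L(\tfrac{1}{2}+r, E)}.
\]
The error term $O(X^{-1/2+\varepsilon})$ will be preserved under differentiation by applying Cauchy's integral formula on a small circle around $\alpha$, staying within the region in which the conjecture is stated uniformly (this is why the bounds $\RRe(r) \gg 1/\log X$ and $\IIm(r) \ll X^{1-\varepsilon}$ are imposed).

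For the first term $Y(\alpha, \gamma) A(\alpha, \gamma)$ on the right-hand side of the conjecture, I will apply the product rule to obtain $Y_\alpha A + Y A_\alpha$. The definition $Y(\alpha, \gamma) = \zeta(1+2\gamma)/\zeta(1+\alpha+\gamma)$ immediately gives $Y(r, r) = 1$ and $Y_\alpha(r, r) = -\zeta'(1+2r)/\zeta(1+2r)$, while Theorem \ref{factorH} gives $A(r, r) = 1$. Together these produce the summands $-\zeta'/\zeta(1+2r) + A_\alpha(r, r)$ of the stated formula.

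The second term requires more care. The key observation is that $Y(-\alpha, \gamma) = \zeta(1+2\gamma)/\zeta(1-\alpha+\gamma)$ \emph{vanishes} at $\alpha = \gamma = r$ because $\zeta$ has a simple pole at $s = 1$. Thus when the product rule is applied to
\[
\omega_E \left(\frac{\sqrt{N_E}}{2\pi}\right)^{-2\alpha} \frac{\Gamma(1-\alpha)}{\Gamma(1+\alpha)}\, Y(-\alpha, \gamma)\, A(-\alpha, \gamma),
\]
every term in which $\partial_\alpha$ falls on a factor other than $Y(-\alpha, \gamma)$ vanishes at $\alpha = \gamma = r$. The surviving contribution is computed from the Laurent expansion $\zeta(s) = (s-1)^{-1} + O(1)$ near $s = 1$, which yields
\[
\partial_\alpha \bigl[1/\zeta(1-\alpha+\gamma)\bigr]\bigg|_{\alpha=\gamma=r} = \lim_{s \to 1} \frac{\zeta'(s)}{\zeta(s)^2} = -1,
\]
and hence $\partial_\alpha Y(-\alpha, \gamma)|_{\alpha=\gamma=r} = -\zeta(1+2r)$. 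Multiplying by the remaining factors evaluated at $\alpha = r$ yields exactly the second summand of the theorem. The main subtlety is precisely this interplay between the vanishing of $Y(-r, r)$ and the residue of $\zeta$ at $1$; once accounted for, summing the contributions and the preserved error term gives the stated formula.
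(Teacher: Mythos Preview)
Your proposal is correct and follows essentially the same approach as the paper: differentiate the Ratios Conjecture in $\alpha$, set $\alpha=\gamma=r$, use $Y(r,r)=A(r,r)=1$ for the first term and the vanishing $Y(-r,r)=0$ (from the pole of $\zeta$ at $1$) to isolate the surviving contribution in the second term, and preserve the error by Cauchy's integral formula on a circle in the $\alpha$-variable. Your write-up is in fact slightly more explicit than the paper's about why only the $\partial_\alpha Y(-\alpha,\gamma)$ term survives.
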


\begin{proof}
We set
\begin{equation} \label{definitionofAone}
A_{\alpha}(r,r) := \frac{\partial}{\partial
\alpha}A(\alpha,\gamma)\bigg|_{\alpha=\gamma=r}.
\end{equation}
Then
\begin{equation}\label{ronefirstfam}
\frac{\partial}{\partial \alpha}Y(\alpha, \gamma)A(\alpha,
\gamma)\bigg|_{\alpha=\gamma=r} = -\frac{\zeta'}{\zeta}(1+2r) +
A_{\alpha}(r,r),
\end{equation}
and
\begin{align} \nonumber
\frac{\partial}{\partial \alpha}\bigg[ & \omega_E
\left(\frac{\sqrt{N}}{2\pi}\right)^{-2\alpha}\frac{\Gamma(1-\alpha)}{\Gamma(1+\alpha)}
Y(-\alpha,\gamma)A(-\alpha,\gamma)\bigg]_{\alpha=\gamma=r}
\\ \nonumber
& =
\omega_E\left(\frac{\sqrt{N}}{2\pi}\right)^{-2\alpha}\frac{\Gamma(1-\alpha)}{\Gamma(1+\alpha)}
A(-\alpha,\gamma)
\frac{d}{d \alpha} \bigg[Y(-\alpha,\gamma)\bigg]_{\alpha=\gamma=r}\\
& =
-\omega_E\left(\frac{\sqrt{N}}{2\pi}\right)^{-2r}\frac{\Gamma(1-r)}{\Gamma(1+r)}\zeta(1+2r)
A(-r,r).\label{rtwofirstfam}
\end{align}
Replacing $\eqref{ronefirstfam}$ and $\eqref{rtwofirstfam}$ in Conjecture \ref{ratiosconjecture}, and using $\alpha=\gamma=r$, we get the desired formula
for
$$
\frac{1}{|\FX|} \sum_{E \in \FX} \frac{L'(\frac{1}{2} + r,E)}{L(\frac{1}{2} + r,E)} .
$$
We still have to justify that the error term remains the same under differentiation. We claim that
\begin{align} \nonumber
& \frac{1}{|\FX|} \Bigg\{\sum_{E \in \FX} \bigg[\frac{L'(\frac{1}{2} + r,E)}{L(\frac{1}{2} + r,E)}-\Big[-\frac{\zeta'}{\zeta}(1+2r) + A_{\alpha}(r,r)\\
 & - \omega_E\left(\frac{\sqrt{N_E}}{2\pi}\right)^{-2r}\frac{\Gamma(1-r)}{\Gamma(1+r)} \zeta(1+2r)A(-r,r) \Big]\bigg]\Bigg\} = O(X^{-1/2+\varepsilon}). \label{ralpha}
\end{align}

Let the left hand side of $\eqref{ralpha}$ be denoted by
$R(\alpha)$. Let $\alpha_0 \in \CC$ such that ${\rm
Re}(\alpha_0)>0.$ Assume $R$ is analytic in a neighborhood of
$\alpha_0$ and let $C$ be a circle of radius $r_0 \approx 1$ around
$\alpha_0$. Then by Cauchy's integral formula we have
\begin{align}
|R'(\alpha_0)|&=\left|\frac{1}{2\pi
i}\oint_C\frac{R(\alpha)}{(\alpha-\alpha_0)^2}d\alpha\right|\leq
\max_{\alpha \in C}\left|\frac{R(\alpha)}{2 \pi i
(\alpha-\alpha_0)^2}\right|(2 \pi r_0) \nonumber \\
&\leq \max_{\alpha \in
C}\left|\frac{1}{(\alpha-\alpha_0)^2}\right|O(X^{-1/2+\varepsilon})=O(X^{-1/2+\varepsilon})
\end{align}
from our assumption in Conjecture \ref{ratiosconjecture}. This completes the proof. \end{proof}

We now use Theorem \ref{ratiostheorem} to rewrite the one-level density $D(\mathcal{F}(X), \phi)$ for the family of all elliptic curves.
With the change of variable $s \rightarrow 1-s$ in \reff{eq:Cauchy} (noting that $\phi$
is even), we get that
\begin{eqnarray} \nonumber
&&\frac{1}{|\FX|} \sum_{E \in \FX} \frac{1}{2\pi i} \int_{(1-c)}
\frac{L'(s,E)}{L(s,E)} \phi(-i(s-\tfrac{1}{2}))ds \\ \label{replaceoneminus}
=&& \frac{1}{|\FX|} \sum_{E \in \FX}
\frac{1}{2\pi i} \int_{(c)} \frac{L'(1-s,E)}{L(1-s,E)}
\phi(-i(s-\tfrac{1}{2}))ds.
\end{eqnarray}
The functional equation
\begin{equation}
L(s,E) = \omega_E X(s,E) L(1-s,E)
\end{equation}
where $X(s,E)$ is defined in \reff{definitionofXE} gives
\begin{equation} \label{logfunctional}
\frac{L'(s,E)}{L(s,E)} = \frac{X'(s,E)}{X(s,E)} -
\frac{L'(1-s,E)}{L(1-s,E)}.
\end{equation}
Using \reff{logfunctional} in \reff{replaceoneminus} gives that the
second integral of \reff{eq:Cauchy} can be rewritten as
\begin{equation*}
\frac{1}{|\FX|} \sum_{E \in \FX} \frac{1}{2\pi i} \int_{(c)}
\left[\frac{L'(s,E)}{L(s,E)} - \frac{X'(s,E)}{X(s,E)} \right]
\phi(-i(s-\tfrac{1}{2}))ds,
\end{equation*}
and then
\begin{align*}
& D({\mathcal{F}}; \phi, X) \\
=& \frac{1}{|\FX|} \sum_{E \in \FX}\frac{1}{2\pi i}\left(\int_{(c)} -
\int_{(1-c)}\right)\frac{L'(s,E)}{L(s,E)} \phi(-i(s-\tfrac{1}{2}))
ds\\
=&\frac{1}{|\FX|} \sum_{E \in \FX} \frac{1}{2\pi
i}\int_{(c)} \bigg[2 \frac{L'(s,E)}{L(s,E)} -\frac{X'(s,E)}{X(s,E)}\bigg] \phi(-i(s-\tfrac{1}{2})) ds \\
=& \frac{1}{|\FX|} \sum_{E \in \FX} \frac{1}{2\pi i}\int_{(c-1/2)} \bigg[2 \frac{L'(1/2 +
r,E)}{L(1/2 + r,E)} - \frac{X'(1/2+r,E)}{X(1/2+r,E)}\bigg] \phi(-ir)dr
\end{align*}
with the change of variable $s = 1/2 + r$. We bring the summation
inside the integral and substitute
\begin{equation*}
\frac{1}{|\FX|}\sum_{E \in \FX}
\frac{L'(\frac{1}{2}+r,E)}{L(\frac{1}{2}+r,E)}
\end{equation*}
with the expression of Theorem \ref{ratiostheorem}, and we pull the summation back
outside of the integral. This gives
\begin{align*}
&D({\mathcal{F}}; \phi, X)\\
=&\frac{1}{|\FX|} \sum_{E\in \FX}
\frac{1}{2\pi i} \int_{(c-1/2)}  \bigg[-2 \frac{\zeta'}{\zeta}(1+2r)
+ 2 A_{\alpha}(r,r) \\
-&2\omega_E\left(\frac{\sqrt{N_E}}{2\pi}\right)^{-2r}\frac{\Gamma(1-r)}{\Gamma(1+r)}
\zeta(1+2r)A(-r,r)  - \frac{X'(1/2+r,E)}{X(1/2+r,E)} \bigg] \, \phi(-ir) \, dr \\
+&O(X^{-1/2+\varepsilon}).
\end{align*}

We now move the integral from $\mbox{Re}(r)=c-1/2=c'$ to
$\mbox{Re}(r)=0$ by integrating over the rectangle $R$ from $c'-iT$
to $c'+iT$ to $iT$ to $-iT$ and back to $c'-iT$, and letting $T
\rightarrow \infty$. The two horizontal integrals tend to 0, and we
only have to consider the vertical integrals. We have to distinguish
2 cases, as the integrand
\begin{eqnarray*}
F(r) &=& - \frac{X'(1/2+r,E)}{X(1/2+r,E)} - 2 \frac{\zeta'}{\zeta}(1+2r) + 2 A_{\alpha}(r,r) \\
&& -2
\omega_E\left(\frac{\sqrt{N_E}}{2\pi}\right)^{-2r}\frac{\Gamma(1-r)}{\Gamma(1+r)}
\zeta(1+2r)A(-r,r)
\end{eqnarray*}
has a pole at $r=0$ with residue 2 on the boundary of the rectangle $R$ when
$\omega_E=-1$. We have that the function $$F(r) - \frac{1-\omega_E}{r}$$ is analytic inside and on the contour $R$. Hence from
Cauchy's Theorem we have that
\begin{align*} \nonumber
D({\mathcal{F}}; \phi, X) &=  \frac{1}{|\FX|} \sum_{E\in \FX} \frac{1}{2\pi}
\int_{-\infty}^\infty \bigg[- 2 \frac{\zeta'}{\zeta}(1+2it) + 2
A_{\alpha}(it,it) \\ \nonumber &-2
\omega_E\left(\frac{\sqrt{N_E}}{2\pi}\right)^{-2it}\frac{\Gamma(1-it)}{\Gamma(1+it)}
\zeta(1+2it)A(-it,it) \\ \nonumber &+
2\log\left(\frac{\sqrt{N_E}}{2\pi}\right)+\frac{\Gamma'}{\Gamma}(1-it)
+ \frac{\Gamma'}{\Gamma}(1+it)  - \frac{1-\omega_E}{it}\bigg] \,
\phi(t) \, dt \\
&+ \frac{1}{|\FX|} \sum_{{E\in \FX}\atop{\omega_E = -1}} \frac{1}{2 \pi i} \int_{(c')}   \frac{2\phi(-ir)}{r}  \,dr + O(X^{-1/2+\varepsilon}),
\end{align*}
where we used the change of variable $r=it$ in the first integral, and
\begin{equation*}
\frac{X'(1/2 + it,E)}{X(1/2 + it,E)} =
-2\log\left(\frac{\sqrt{N_E}}{2\pi}\right)-\frac{\Gamma'}{\Gamma}(1-it)
- \frac{\Gamma'}{\Gamma}(1+it).
\end{equation*}

If $\omega_E=1$, then ${1-\omega_E}=0$, and the second sum is zero. If $ \omega_E=-1$, then by Cauchy's Theorem
\begin{equation*}
2 \phi(0)= \frac{1}{2 \pi i} \int_{(c')}   \frac{2 \phi(-ir)}{r} dr - \frac{1}{2 \pi i} \int_{(-c')}   \frac{2 \phi(-ir)}{r} dr =
\frac{2}{2 \pi i} \int_{(c')}  \frac{2 \phi(-ir)}{r} dr ,
\end{equation*}
which gives the following theorem.

\begin{theorem}\label{oneleveldensityresult}
Assuming the Ratios Conjecture \ref{ratiosconjecture}, the one-level density of the family $\FX$ of all elliptic curves is given by
\begin{align*} \nonumber
D({\mathcal{F}}; \phi, X) & = \frac{1}{|\FX|}\frac{1}{2\pi} \int_{-\infty}^\infty
\phi(t) \sum_{E\in \FX}\bigg[ 2 \log\bigg(\frac{\sqrt{N_E}}{2\pi}
\bigg) + \frac{\Gamma'}{\Gamma}(1+it) \\ \nonumber & +
\frac{\Gamma'}{\Gamma}(1-it) + 2 \bigg(-\frac{\zeta'}{\zeta}(1+2it)
+ A_{\alpha}(it,it) \\  \nonumber &
-\omega_E\left(\frac{\sqrt{N_E}}{2\pi}\right)^{-2it}\frac{\Gamma(1-it)}{\Gamma(1+it)}
\zeta(1+2it)A(-it,it)\bigg) - \frac{1-\omega_E}{it} \bigg]dt \\
&+  \frac{\phi(0)}{|\FX|} \sum_{{E\in
\FX}\atop{\omega_E=-1}} 1 + O(X^{-1/2+\varepsilon}).
\end{align*}
\end{theorem}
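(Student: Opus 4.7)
The plan is to begin from the Cauchy integral representation \eqref{eq:Cauchy} of $D(\mathcal{F};\phi,X)$ and convert both vertical integrals into a single integral on the line $\mbox{Re}(s)=c$ by using the functional equation. Concretely, in the $\int_{(1-c)}$ piece I substitute $s\mapsto 1-s$ (allowed since $\phi$ is even), and then use
\begin{equation*}
\frac{L'(1-s,E)}{L(1-s,E)} = \frac{X'(s,E)}{X(s,E)} - \frac{L'(s,E)}{L(s,E)}
\end{equation*}
which follows from the functional equation $L(s,E)=\omega_E X(s,E) L(1-s,E)$. After this reduction $D(\mathcal{F};\phi,X)$ equals the average over $E\in\FX$ of a single integral on $(c)$ of $2L'(s,E)/L(s,E) - X'(s,E)/X(s,E)$ against $\phi(-i(s-1/2))$.

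Next, I change variables $s=1/2+r$, bring the sum over $E$ inside the integral, and substitute the ratios-conjecture expression from Theorem \ref{ratiostheorem} for the averaged logarithmic derivative. The explicit formula
\begin{equation*}
\frac{X'(1/2+r,E)}{X(1/2+r,E)} = -2\log\!\left(\frac{\sqrt{N_E}}{2\pi}\right) - \frac{\Gamma'}{\Gamma}(1-r) - \frac{\Gamma'}{\Gamma}(1+r)
\end{equation*}
supplies the archimedean/conductor piece. At this stage the integrand consists of the desired $\log$, $\Gamma'/\Gamma$, $\zeta'/\zeta$, $A_\alpha$ terms, plus the dual-sum contribution involving $\omega_E \zeta(1+2r) A(-r,r)$, and the error is $O(X^{-1/2+\varepsilon})$ uniformly.

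Then I shift the contour from $\mbox{Re}(r)=c'=c-1/2>0$ to $\mbox{Re}(r)=0$ via a rectangle $c'-iT, c'+iT, iT, -iT$, letting $T\to\infty$. The main point I must check is the residue at $r=0$: the factor $\zeta(1+2r)$ produces a simple pole, and the coefficient in front is $-2\omega_E \cdot 1 \cdot 1 \cdot \tfrac{1}{2} = -\omega_E$, so combining with the $(1-\omega_E)/r$ subtraction, the function $F(r)-(1-\omega_E)/r$ is analytic on and inside the rectangle and the horizontal segments vanish as $T\to\infty$ by the decay of $\phi(-ir)$ and mild growth of the other factors. This yields the main integral on $\mbox{Re}(r)=0$ (i.e.\ $r=it$) with an extra $(1-\omega_E)/(it)$ term inside the brackets.

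Finally, I must account separately for the subtracted pole when $\omega_E=-1$. Using Cauchy's theorem on the symmetric contour and the fact that $\phi$ is even one obtains
\begin{equation*}
\frac{1}{2\pi i}\int_{(c')} \frac{2\phi(-ir)}{r}\,dr = \phi(0),
\end{equation*}
so the correction from curves with $\omega_E=-1$ contributes exactly $\phi(0)\cdot|\FX|^{-1}\#\{E\in\FX:\omega_E=-1\}$. Collecting everything gives the stated formula. The main obstacle is the contour shift together with the pole bookkeeping at $r=0$: one has to balance the pole from $\zeta(1+2r)$ against the $(1-\omega_E)/r$ term, ensure the horizontal pieces decay (using the rapid decay of the Schwartz function $\phi$ and the polynomial/logarithmic growth of $\Gamma'/\Gamma$ and $\zeta'/\zeta$ on vertical strips), and justify that the ratios-conjecture error term $O(X^{-1/2+\varepsilon})$ is preserved under both the differentiation step (already handled in Theorem \ref{ratiostheorem} via Cauchy's integral formula on a circle of radius $\approx 1$) and the final integration against $\phi$.
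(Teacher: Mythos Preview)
Your proposal is correct and follows essentially the same route as the paper: reduce to a single integral on $(c)$ via the functional equation and the substitution $s\mapsto 1-s$, insert Theorem \ref{ratiostheorem}, shift the contour to $\mbox{Re}(r)=0$, subtract $(1-\omega_E)/r$ to regularise the pole at $r=0$, and recover the $\phi(0)$ contribution for $\omega_E=-1$ by Cauchy's theorem. One small point to tighten: in your residue check you only display the contribution $-\omega_E$ coming from the $\zeta(1+2r)$ factor in the dual term, but the term $-2\,\zeta'/\zeta(1+2r)$ also has a simple pole at $r=0$ with residue $+1$; it is the sum $1-\omega_E$ of these two that justifies the subtraction $(1-\omega_E)/r$, so you should state this explicitly.
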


\kommentar { {\bf TO DO:} Now, we have to make the change of
variable $\tau=\frac{t \log{X}}{2 \pi}$ and analyze the first few
terms of the power series. I think we have to separate $\omega_E=1$
and $\omega_E=-1$. When $\omega_E=1$, we get $1 + \frac{\sin{2 \pi
\tau}}{2 \pi \tau}$ (corresponding to $SO(\mbox{even})$), and when
$\omega_E=-1$, we get $1 - \frac{\sin{2 \pi \tau}}{2 \pi \tau}$
(corresponding to $SO(\mbox{odd})$). The Dirac function that shows
in $SO(\mbox{odd})$ comes from
$$
\phi(0) = \int_{-\infty}^{\infty} \phi(t) \delta_0(t) \;dt.$$ }

In \cite[Theorem 3.1]{Young2006}, Young showed that the underlying symmetry of the
family $\FX$ is orthogonal for test functions $\phi$
with $\widehat{\phi}\subset (-\tfrac{7}{9},\tfrac{7}{9})$. Thus we
expect
\begin{equation*}
\lim_{X \rightarrow \infty} D({\mathcal{F}}; \phi, X) = \int_{-\infty}^\infty
\phi(t) \mathcal{W}(\mathcal{F})(t) dt
\end{equation*}
where
\begin{equation*}
\mathcal{W}(\mathcal{F})(t) = 1 + \tfrac{1}{2} \delta_0(t)
\end{equation*}
with $\delta_0(t)$ denoting the Dirac distribution, and
$\mathcal{W}(\mathcal{F})(t)$ is the scaled one-level density of
the orthogonal group. In the following, we show that our result in
Theorem \ref{oneleveldensityresult} is also consistent with this
expectation. We also write lower order terms for the scaled one-level density
by expanding the expression in Theorem \ref{oneleveldensityresult} in a Taylor series.

We first make the standard change of variables
\begin{equation} \label{changeofV}
\tau = \frac{t L}{\pi} \mbox{~with~} L =
\log\left(\frac{\sqrt{X}}{2\pi e}\right).
\end{equation}
Note that $L$ is chosen so that for $N_E\approx X$, the sequence $\gamma_E$ of low-lying zeros arising from (say) $\gamma_E\leq1$ has essentially constant mean spacing one. (Recall that
by a Riemann-von Mangold type theorem as in for example \cite[Thm.\ 5.8]{I-K-book} $L(s,E)$ has approximately $\log (N_E /(2\pi e)^2)/2\pi$ zeros in the region $0<\Im(\rho_E) \leq 1$). We defined the normalized
test function $\psi$ by
\begin{equation} \label{defofpsi} \phi(t) = \psi\left( \frac{t L}{\pi} \right).\end{equation}

We know from the work of \cite[Lemma 5.1]{Young2006} that the {\it conductor condition}
holds for the family $\FX$, and we can write
\begin{equation} \label{conductor-fromMY}
\frac{1}{|\FX|} \sum_{E \in \FX} \log{ \left( \frac{\sqrt{N_E}}{2 \pi} \right)} = \log{ \left( \frac{\sqrt{X}}{2 \pi} \right)} \left\{ 1 + O \left( \frac{1}{\log{X}} \right) \right\}.
\end{equation}
We make the standard
assumption
that half of the family has even and the other half odd rank (the Parity Conjecture). Even
rank corresponds to an even functional equation ($\omega_E = 1$) and
odd rank corresponds to an odd functional equation ($\omega_E=-1$), and that this is compatible with \eqref{conductor-fromMY}. Then, using the
change of variables \eqref{changeofV},
we rewrite the statement of Theorem \ref{oneleveldensityresult} as
\begin{eqnarray*}
\frac{1}{|\FX|} \sum_{E \in \FX} \sum_{\gamma_E} \psi \left( \frac{\gamma_E L}{\pi} \right) &\sim&
\int_{-\infty}^\infty \psi (\tau)
 g(\tau) \;d\tau + \frac{\phi(0)}{2} \\
 &=& \int_{-\infty}^\infty \psi (\tau)
 h(\tau) \;d\tau \\
\end{eqnarray*}
where
\begin{eqnarray*} \nonumber
g(\tau) &=& h(\tau) - \frac{1}{2} \delta_0(\tau) \\ &=& \frac{1}{2L} \bigg[ 2
\log\left(\frac{\sqrt{X}}{2\pi}\right) +
\frac{\Gamma'}{\Gamma}\bigg(1 + \frac{  \pi i\tau}{L}\bigg) +
\frac{\Gamma'}{\Gamma}\left(1 - \frac{ \pi i\tau}{L}\right) \\
&&
 -2 \frac{\zeta'}{\zeta}\bigg(1 + \frac{2 \pi i\tau}{L}\bigg)
 + 2 A_{\alpha}\bigg(\frac{ \pi i\tau }{L}, \frac{
\pi i\tau }{L}\bigg)  - \frac{L}{ \pi i \tau} \bigg].
\end{eqnarray*}
Making use of the following Laurent series
\begin{align*}
\frac{\zeta'}{\zeta}\left(1 + s\right) &= \frac{-1}{s}+\gamma_0 -(\gamma_0^2+\gamma_1)s+\cdots,\\
\zeta(1+s) &= \frac{1}{s}+\gamma_0-\gamma_1s+\cdots,\\
\frac{\Gamma'}{\Gamma}\left(1+s\right)&=-\gamma_0+\frac{\pi^2}{6}s-\zeta(3)s^2+\cdots,\\
A_\alpha(s,s)&=A_\alpha(0,0)+(A_{\alpha\alpha}(0,0)+A_{\alpha\gamma}(0,0))s+\cdots,
\end{align*}
where  $A_{\alpha\alpha}(r,r) := \frac{d}{d
\alpha}A_\alpha(\alpha,\gamma)\bigg|_{\alpha=\gamma=r},A_{\alpha\gamma}(r,r) := \frac{d}{d
\gamma}A_\alpha(\alpha,\gamma)\bigg|_{\alpha=\gamma=r} $ and the $\gamma_n$ are the Stieltjes constants,
the Taylor expansion of $h(\tau)$  in $L^{-1}$ is
\begin{align*}
h(\tau) =&\frac{1}{2L}\Bigg[2L-2\gamma_0-2\left(\frac{-L}{2\pi i \tau}+\gamma_0-(\gamma_0^2+\gamma_1)\frac{2\pi i \tau}{L}\right) \\
+&2\left(A_\alpha(0,0)+(A_{\alpha\alpha}(0,0)+A_{\alpha\gamma}(0,0))\frac{\pi i \tau}{L}\right)-\frac{L}{\pi i \tau}+O\left(L^{-2}\right)\Bigg] + \frac{1}{2} \delta_0(\tau) \\
=&1+ \frac{1}{2} \delta_0(\tau) + \frac{A_\alpha(0,0)-2\gamma_0}{L}
+\frac{ (A_{\alpha\alpha}(0,0)+A_{\alpha\gamma}(0,0)+2(\gamma_0^2+\gamma_1))\pi i \tau}{L^{2}}\\
+& O\left(\frac{1}{L^{3}}\right).
\end{align*}
Then, the leading terms for the one-level scaling density associated to the
families of all elliptic curves give $\mathcal{W}(\tau) = 1+ \frac{1}{2} \delta_0(\tau)$
which corresponds to the density $\mathcal{W}(O)(\tau)$ associated with the orthogonal
group $O$ as predicted by the conjectures of Katz and Sarnak. We also get
lower order terms for the
one-level scaling density  which
are particular to this family, and
could be used to refine experimental statistics for small conductor.

\section{A one-parameter family of elliptic curves} \label{oneparameter}

We now consider another family of elliptic curves, a one-parameter family where the sign of the
functional equation is always negative. This family was first studied by Washington \cite{washington}, who proved that
the rank is odd for $t^2 + 3t +3$ assuming the finiteness of the Tate-Shafarevic group. Rizzo \cite{rizzo} then
proved that the rank is odd for all $t$. The one-level density for this family was also studied by Miller \cite{miller-thesis, Miller}.

More precisely, let
\begin{equation} \label{oneparafam}
E_t: y^2=x^3+tx^2-(t+3)x+1.
\end{equation}
The discriminant of $E_t$ is
\begin{equation*}
\Delta(t)=2^4(t^2+3t+9)^2.
\end{equation*}
Replacing $t$ with $12t+1$ gives $\Delta(12t+1)=2^4(144t^2+60t+13)^2$. As proven in \cite{Miller},
if $144t^2+60t+13$ is square-free then the conductor is
\begin{equation*} \label{tconductor}
C(t)=2^3(t^2+3t+9)^2.
\end{equation*}

In this section we will study the family
\begin{equation} \label{oneparameterfamily}
\Ft =\{ E_t: y^2=x^3+tx^2-(t+3)x+1, \;\; t \in \ZZ\}.
\end{equation}
As usual, we denote by
\begin{equation*}
\Ft(X) =\{ E_t: t\leq X^{\frac{1}{4}}\},
\end{equation*}
the set of curves in $\Ft$ of conductor of size at most $X$. Let
\begin{equation} \label{after3.3}
L(s,E_t)=\sum_{n = 1}^\infty \frac{\lambda_t(n)}{n^s} = \prod_p
\left(1- \frac{\lambda_t(p)}{p^s} + \frac{\psi_t(p)}{p^{2s}} \
\right)^{-1}
\end{equation}
denote the $L$-function attached to $E_t$ where $\psi_t$ is the
principal Dirichlet character modulo the conductor $C(t)$ of $E$,
i.e.,
\begin{equation*}
\psi_t(p) = \begin{cases}
1 &\:{\rm if}\:p\nmid C(t),\\
0 & \:{\rm if}\: p \mid C(t).
\end{cases}
\end{equation*}

For $p\neq 2$, $\lambda_t(p)$ is given by
\begin{equation*} \label{definitionoflambdat}
\lambda_t(p) = -\frac{1}{\sqrt{p}} \sum_{x \modd p}
\left(\frac{x^3+tx^2 - (t+3)x + 1}{p} \right).
\end{equation*}
If $p=2$ then $\eqref{oneparafam}$ has a cusp and $\lambda_t(2^k)=0$ for all positive $k$. We
 recall that $\lambda_t(n)$ are multiplicative, and prime powers are computed by
 the Hecke relations
\begin{equation*} \label{theckerelations}
\lambda_t(p^j) = \bigg\{\begin{array}{ll}
U_j\left(\frac{\lambda_t(p)}{2}\right) & \qquad {\rm if}\:
(p,C(t))=1, \\
\lambda_t^j(p) & \qquad {\rm if}\: (p,C(t))>1,
\end{array}\bigg.
\end{equation*}
where $U_j(x)$ are the Chebyshev polynomials.

As in the previous section, we use the principal and the dual sums of the
approximate functional equation \eqref{AFE}  at $s= 1/2 + \alpha$, ignoring questions of convergence
or error terms. The principal sum is
\begin{equation} \label{tapprox-principal}
\sum_n\frac{\lambda_t(n)}{n^{\frac{1}{2}+\alpha}}
\end{equation}
and since the sign of the functional equation
is always negative, the dual sum is
\begin{equation} \label{tapprox-dual}
- X_t\left(\frac{1}{2}+\alpha\right)\sum_n\frac{\lambda_t(n)}{n^{\frac{1}{2}-\alpha}},
\end{equation}
where
\begin{equation*}
X_t(s)=\frac{\Gamma\left(\frac{3}{2}-s\right)}{\Gamma\left(\frac{1}{2}+s\right)}
\left(\frac{\sqrt{C(t)}}{2\pi}\right)^{1-2s}.
\end{equation*}

Finally, we write
\begin{equation} \label{tinverse}
\frac{1}{L(s,E_t)}=\prod_p\left(1-\frac{\lambda_t(p)}{p^s}+\frac{\psi_t(p)}{p^{2s}}\right)=\sum_{n=1}^\infty \frac{\mu_t(n)}{n^s},
\end{equation}
where $\mu_t$ is multiplicative and given by
\begin{equation} \label{definitionofmut}
\mu_t(p^k) =
\begin{cases}
-\lambda_t(p) & \mbox{~if~} k = 1,\\
\psi_t(p) & \mbox{~if~} k = 2,\\
0 & \mbox{~if~} k > 2.
\end{cases}
\end{equation}

We are now ready to derive the $L$-function ratios conjecture for this family following the same recipe that was used in the first family. We keep in this section all the notation of the previous section, but the objects are now attached to the new family. Using \eqref{tapprox-principal}, \eqref{tapprox-dual} and \eqref{tinverse}, we set
\begin{align}
R_1(\alpha,\gamma)&:=\frac{1}{|\Ft(X)|}\sum_{E_t\in
\Ft(X)}\sum_{m_1,m_2}\frac{\lambda_t(m_1)\mu_t(m_2)}{m_1^{\frac{1}{2}+\alpha}m_2^{\frac{1}{2}+\gamma}},
 \label{rstarone} \\
R_2(\alpha,\gamma)&:=\frac{-1}{|\Ft(X)|}\sum_{E_t\in\Ft(X)} X_t \left(\frac{1}{2}+\alpha\right) \sum_{m_1,m_2}\frac{\lambda_t(m_1)
\mu_t(m_2)}{m_1^{\frac{1}{2}+\alpha}m_2^{\frac{1}{2}+\gamma}}.
\label{rstartwo}
\end{align}
and we approximate
\begin{equation} \label{replacehere} \frac{1}{|\Ft(X)|}\sum_{E_t\in
\Ft(X)} \frac{L(1/2+\alpha, E_t)}{L(1/2+\gamma, E_t)}
\end{equation}  by $R_1(\alpha,\gamma)+R_2(\alpha,\gamma)$.
As before, the main step to obtain the ratios conjecture for this family is to replace each $(m_1,m_2)$-summand in $\eqref{rstarone}$ by its average over the family.

\begin{lemma}  \label{qstarsecondfam} Let $m_1, m_2 \geq 0$ be fixed integers, and let
\begin{equation*}
\Qt(m_1,m_2)  := \frac{1}{m^*}\sum_{t \modd{m^*}}
\lambda_t(m_1)\mu_t(m_2)
\end{equation*}
where $m^*=\prod_{p\mid m_1m_2} p$.
Then
\begin{align*}
 \lim_{X\rightarrow \infty}\frac{1}{|\FX|}\sum_{E \in
\FX}\lambda_t(m_1)\mu_t(m_2) = \Qt(m_1,m_2). \label{qstarsecondfam}
\end{align*}
Furthermore, $\Qt(m_1,m_2)$ is multiplicative.
\end{lemma}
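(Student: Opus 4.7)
The plan is to mirror the proof of Lemma \ref{average}, but the setting is substantially simpler because here the family is parameterised by a single integer $t$ rather than the pair $(a,b)$, and because the definition $\Ft(X)=\{E_t:|t|\leq X^{1/4}\}$ involves no minimality correction of the type $p^{4}\mid a\Rightarrow p^{6}\nmid b$. First I would verify the key periodicity statement: for any prime $p$, the quantities $\lambda_t(p^{j})$ and $\mu_t(p^{j})$ depend only on $t\bmod p$. For $p\nmid C(t)$ this is clear from the character sum expression for $\lambda_t(p)$ (and the Hecke relations), while for $p\mid C(t)$ one notes that the condition $p\mid C(t)=2^{3}(t^{2}+3t+9)^{2}$ is itself congruential in $t\bmod p$, so the branching in \eqref{theckerelations} and \eqref{definitionofmut} is determined by $t\bmod p$. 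Multiplicativity of $\lambda_t$ and $\mu_t$ in their arguments then promotes this to: $\lambda_t(m_{1})\mu_t(m_{2})$ is periodic in $t$ with period dividing $m^{*}=\prod_{p\mid m_{1}m_{2}}p$.

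Next, I would break the sum $\sum_{|t|\leq X^{1/4}}\lambda_t(m_{1})\mu_t(m_{2})$ into residue classes modulo $m^{*}$. Each residue class contributes $|\Ft(X)|/m^{*}+O(1)$ values of $t$, so
\[
\sum_{|t|\leq X^{1/4}}\lambda_t(m_{1})\mu_t(m_{2})=\frac{|\Ft(X)|}{m^{*}}\sum_{t\bmod m^{*}}\lambda_t(m_{1})\mu_t(m_{2})+O(m^{*}).
\]
Dividing by $|\Ft(X)|$ and sending $X\to\infty$ with $m_{1},m_{2}$ fixed yields
\[
\lim_{X\to\infty}\frac{1}{|\Ft(X)|}\sum_{E_{t}\in\Ft(X)}\lambda_t(m_{1})\mu_t(m_{2})=\frac{1}{m^{*}}\sum_{t\bmod m^{*}}\lambda_t(m_{1})\mu_t(m_{2})=\Qt(m_{1},m_{2}),
\]
exactly the claimed asymptotic.

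Finally, for multiplicativity, suppose $(m_{1}m_{2},n_{1}n_{2})=1$. Writing the combined period as $(mn)^{*}=m^{*}n^{*}$ with $(m^{*},n^{*})=1$, the Chinese Remainder Theorem gives a bijection between $t\bmod m^{*}n^{*}$ and pairs $(t_{1}\bmod m^{*},t_{2}\bmod n^{*})$. Since $\lambda_t(m_{1}n_{1})=\lambda_{t_{1}}(m_{1})\lambda_{t_{2}}(n_{1})$ and $\mu_t(m_{2}n_{2})=\mu_{t_{1}}(m_{2})\mu_{t_{2}}(n_{2})$ under this correspondence (using the multiplicativity of $\lambda_t,\mu_t$ in their argument together with the congruential description above), the double sum factors to give $\Qt(m_{1}n_{1},m_{2}n_{2})=\Qt(m_{1},m_{2})\Qt(n_{1},n_{2})$.

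I do not expect a serious obstacle: unlike Lemma \ref{average}, there is no need to insert a power-detecting Möbius sum or to track a factor $\zeta_{6n}(10)/\zeta_{6}(10)$, because the only size constraint here is the single linear condition $|t|\leq X^{1/4}$. The only delicate point is checking that the periodicity still holds at primes of bad reduction for $E_{t}$; this is handled by the observation that the conductor exponent at $p$ is itself a congruence condition on $t$, so the piecewise definition of $\lambda_{t}$ and $\mu_{t}$ at $p$ is respected by reduction $\bmod\, p$.
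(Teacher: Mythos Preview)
Your proposal is correct and matches the paper's approach: the paper's proof consists of a single sentence, ``This is completely similar to the proof of Lemma \ref{average},'' and what you have written is precisely a fleshed-out version of that, with the correct observation that the one-parameter setting is simpler (no M\"obius sieve for minimality, no $\zeta_{6n}(10)$ correction factor). Your handling of the periodicity at bad primes and the CRT factorisation for multiplicativity is exactly what the omitted details would be.
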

\begin{proof} This is completely similar to the proof of Lemma  \ref{average}. \end{proof}

Then replacing each term in $\eqref{rstarone}$ by its average value $\Qt(m_1,m_2)$, and using
Lemma $\ref{qstarsecondfam}$, we are led as before to consider
\begin{equation} \label{Htdirichlet}
\Ht(\alpha,\gamma):=\sum_{m_1,m_2}\frac{\Qt(m_1,m_2)}{m_1^{\frac{1}{2}+\alpha}m_2^{\frac{1}{2}+\gamma}}
= \prod_p\sum_{m_1,m_2}
\frac{\Qt(p^{m_1},p^{m_2})}{p^{m_1(\frac{1}{2}+\alpha)+m_2(\frac{1}{2}+\gamma)}}.
\end{equation}
As in the previous case, we switched notation, and we are now using $m_1, m_2$ for the exponents of the prime powers. By the definition of the Moebius function in $\eqref{definitionofmut}$
only the terms with $m_2=0,1,2$ in $\eqref{Htdirichlet}$
contribute. So we have
\begin{align}
& \sum_{m_1, m_2} \frac{\tQtp}{\denominator} \nonumber \\
=&\sum_{m_1 \geq 0} \frac{\tQtpzero}{\denominatoralpha} + \sum_{m_1
\geq 0}\frac{\tQtpone}{p^{m_1(\frac{1}{2} + \alpha) + \frac{1}{2} +
\gamma}} + \sum_{m_1 \geq 0}\frac{\tQtptwo}{p^{m_1(\frac{1}{2} +
\alpha) + 1+ 2\gamma}}\label{HtinQform}
\end{align}
where
\begin{eqnarray*}
\tQtpzero &=& \frac{1}{p} \sum_{t \modd p} \lambda_t(p^{m_1}), \label{Qtzero} \\
\tQtpone &=& -\frac{1}{p} \sum_{t \modd p} \lambda_t(p^{m_1}) \lambda_t(p), \label{Qtone}\\
\tQtptwo &=& \frac{1}{p} \sum_{\substack{t \modd p \\ p\nmid C(t)}}
\lambda_t(p^{m_1}). \label{Qttwo}
\end{eqnarray*}

Let $\chi_4(n)$ denote the non-principal character modulo 4.

\begin{lemma} \label{qstarone}
For $p>2$ we have that
\begin{equation}
\Qt(p,1)=-\Qt(1,p)=-\left(\frac{1+\chi_4(p)}{\sqrt{p}}\right).
\end{equation}
\end{lemma}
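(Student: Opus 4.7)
The plan is to compute $\Qt(p,1)$ directly from the definition and deduce the other equality along the way. By $\eqref{definitionofmut}$ we have $\mu_t(1)=1$ and $\mu_t(p)=-\lambda_t(p)$, so
\[
\Qt(p,1) = \frac{1}{p}\sum_{t\modd p}\lambda_t(p), \qquad \Qt(1,p) = \frac{1}{p}\sum_{t\modd p}\mu_t(p) = -\frac{1}{p}\sum_{t\modd p}\lambda_t(p).
\]
Hence $\Qt(p,1)=-\Qt(1,p)$ is immediate, and the task reduces to evaluating the single sum $\frac{1}{p}\sum_{t}\lambda_t(p)$.

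Substituting the character-sum formula for $\lambda_t(p)$ and interchanging the order of summation gives
\[
\Qt(p,1) = -\frac{1}{p^{3/2}}\sum_{x\modd p}\sum_{t\modd p}\left(\frac{x^3+tx^2-(t+3)x+1}{p}\right).
\]
The key observation is that the polynomial inside the Legendre symbol is \emph{linear} in $t$: it equals $x(x-1)\,t + (x^3-3x+1)$. When the leading coefficient $x(x-1)$ is a unit in $\FF_p$, i.e.\ when $x\not\equiv 0,1\pmod p$, as $t$ runs through $\FF_p$ the value of this linear form runs through each residue class mod $p$ exactly once, and so the inner sum equals $\sum_{y\modd p}\bigl(\tfrac{y}{p}\bigr)=0$.

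Thus the only surviving contributions come from $x\equiv 0$ and $x\equiv 1$. At $x=0$ the polynomial equals $1$, giving an inner sum of $p\cdot\bigl(\tfrac{1}{p}\bigr)=p$. At $x=1$ the polynomial equals $-1$, giving an inner sum of $p\cdot\bigl(\tfrac{-1}{p}\bigr)=p\,\chi_4(p)$, using that $\bigl(\tfrac{-1}{p}\bigr)=\chi_4(p)$ for odd $p$. Assembling,
\[
\Qt(p,1) = -\frac{1}{p^{3/2}}\bigl(p+p\,\chi_4(p)\bigr) = -\frac{1+\chi_4(p)}{\sqrt{p}},
\]
which yields the claimed identity. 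There is no real obstacle here — the only nontrivial ingredient is spotting that the Weierstrass model $\eqref{oneparafam}$ is affine-linear in $t$, which collapses what would otherwise be a two-variable character sum down to two explicit boundary terms $x=0$ and $x=1$.
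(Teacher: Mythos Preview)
Your proof is correct and follows essentially the same approach as the paper: interchange the order of summation, observe that the cubic is linear in $t$ with leading coefficient $x(x-1)$, so the inner sum vanishes except at $x\equiv 0,1$, and evaluate those two boundary terms to obtain $-\bigl(1+\chi_4(p)\bigr)/\sqrt{p}$.
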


\begin{proof} We have
\begin{align*}
\Qt(p,1) &= \frac{1}{p} \sum_{t \modd{p}} \lambda_t(p) = -\Qt(1,p)
\noindent \\
&=\frac{1}{p} \sum_{t \modd{p}}
\left(\frac{-1}{\sqrt{p}}\sum_{x\modd{p}}\left(\frac{(x^2-x)t+(x^3-3x+1)}{p}\right)\right)
\noindent \\
&=\frac{-1}{p^{\frac{3}{2}}}\sum_{x
\modd{p}}\left(\sum_{t\modd{p}}\left(\frac{(x^2-x)t+(x^3-3x+1)}{p}\right)\right).
\end{align*}
We have that
\begin{equation*}
\sum_{t \modd{p}}\left(\frac{(x^2-x)t+(x^3-3x+1)}{p}\right)=\left\{
\begin{array}{ll} p\left(\frac{x^3-3x+1}{p}\right) & {\rm
if}\: x^2 \equiv x \modd{p}, \\
0 & {\rm otherwise}.
\end{array} \right.
\end{equation*}
Thus
\begin{equation*}
\Qt(p,1)=\frac{-1}{\sqrt{p}}\left(\left(\frac{1}{p}\right)+\left(\frac{-1}{p}\right)\right)
=\frac{-\left(1+\chi_4(p)\right)}{\sqrt{p}}.
\end{equation*}
\end{proof}

\begin{lemma} \label{qstartwo}
We have for $p>2$ that
\begin{equation} \label{lemmatwotwo}
\Qt(p,p)= -1+O\left(\frac{1}{p}\right).
\end{equation}
\end{lemma}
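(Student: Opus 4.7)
The plan is to rewrite $\widetilde{Q}^*(p,p)$ as a triple character sum and evaluate the inner sum over $t$ via the standard quadratic character sum formula. Using $\mu_t(p) = -\lambda_t(p)$ together with the Legendre-symbol expression for $\lambda_t(p)$, one obtains
\[
\widetilde{Q}^*(p,p) \;=\; -\frac{1}{p^2} \sum_{x,y \bmod p}\, \sum_{t \bmod p} \left(\frac{f_t(x)\,f_t(y)}{p}\right),
\]
where $f_t(x) = A(x) + tB(x)$ with $A(x) = x^3 - 3x + 1$ and $B(x) = x(x-1)$. As a polynomial in $t$, $f_t(x)f_t(y)$ is quadratic with leading coefficient $B(x)B(y)$ and discriminant $\bigl(A(x)B(y) - A(y)B(x)\bigr)^2$, a perfect square.

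I would then partition the $(x,y)$-sum into three regions. The boundary $B(x) = B(y) = 0$ forces $x,y \in \{0,1\}$; since $f_t(0)=1$ and $f_t(1)=-1$ are $t$-independent, the four pairs contribute $2p(1+\chi_4(p))$. Pairs with exactly one of $B(x), B(y)$ zero reduce the inner sum to a nontrivial linear character sum over $t$, which vanishes. On the generic region $B(x)B(y)\neq 0$ the inner sum equals $-\bigl(\frac{B(x)B(y)}{p}\bigr)$, except on the exceptional locus $A(x)B(y)=A(y)B(x)$ where it jumps by $p\bigl(\frac{B(x)B(y)}{p}\bigr)$. The baseline contribution is $-\sum_{x,y}\bigl(\frac{x(x-1)y(y-1)}{p}\bigr) = -1$, using the standard identity $\sum_x\bigl(\frac{x(x-1)}{p}\bigr) = -1$ for $p>2$. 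Next I factor
\[
A(x)B(y)-A(y)B(x) \;=\; (x-y)\,P(x,y), \qquad P(x,y) = x^2y^2 - xy(x+y) + 3xy - (x+y) + 1,
\]
verified by direct expansion, and handle the diagonal $x = y$ (with $x\notin\{0,1\}$) trivially: it contributes exactly $p-2$ to the exceptional correction, since $\bigl(\frac{x^2(x-1)^2}{p}\bigr)=1$.

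The crux, and the step I expect to be the main obstacle, is bounding the character sum $E$ over the affine curve $P(x,y)=0$ for $x\neq y$. Setting $u = xy$ and $v = x+y$ rewrites $P=0$ as $(u+1)v = u^2+3u+1$, and a direct computation reveals the miraculous identity
\[
v^2 - 4u \;=\; \frac{(u^2+u+1)^2}{(u+1)^2},
\]
so that $T^2 - vT + u$ factors rationally over $\mathbb{F}_p(u)$ and yields the explicit parametrization $(x,y) = \bigl(u+1,\;u/(u+1)\bigr)$ together with its swap. Under this parametrization one computes $x(x-1)y(y-1) = -u^2/(u+1)$, so the Legendre symbol reduces to $\chi_4(p)\bigl(\frac{u+1}{p}\bigr)$, and orthogonality in $u$ gives $E = O(1)$ after excluding the finitely many exceptional values $u = 0, -1$ and the roots of $u^2 + u + 1 = 0$. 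Without this rational parametrization a generic Weil bound would only deliver $O(\sqrt{p})$, which is insufficient for the $O(1/p)$ error claimed.

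Assembling the three pieces, the triple sum equals $p^2 - 1 + 2p\chi_4(p) + pE$ with $E = O(1)$, and dividing by $-p^2$ yields $\widetilde{Q}^*(p,p) = -1 + O(1/p)$, as required.
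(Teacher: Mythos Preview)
Your proof is correct and follows the same overall strategy as the paper: expand $\widetilde{Q}^*(p,p)$ as a triple character sum, evaluate the inner $t$-sum via the standard quadratic character sum identity, and split according to whether the leading coefficient $B(x)B(y)$ and the discriminant vanish. The paper records the discriminant as $d^2$ with $d=(x-y)(xy-x+1)(xy-y+1)$, and the main term $-1$ comes from the diagonal $x=y$, exactly as in your accounting.

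The only substantive difference is in bounding the off-diagonal exceptional sum $E$. Your polynomial $P(x,y)=x^2y^2-xy(x+y)+3xy-(x+y)+1$ in fact \emph{factors} as $(xy-x+1)(xy-y+1)$; this is precisely how the paper writes $d$. So the locus $P=0$ is the union of two hyperbolas $y=(x-1)/x$ and its swap, each already rationally parametrized by a single coordinate. Your ``miraculous'' parametrization $(x,y)=(u+1,\,u/(u+1))$ is exactly the branch $xy-x+1=0$ with $u=x-1$, and on it one has $x(x-1)y(y-1)=-(x-1)^2/x$, so the character sum collapses to $\chi_4(p)\sum_{x}\bigl(\tfrac{x}{p}\bigr)=O(1)$ at once. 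Your symmetric-function substitution works, but recognizing the factorization of $P$ turns the step you flagged as the main obstacle into a two-line computation, which is what the paper does.
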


\begin{proof}
We have that
\begin{align*}
&\Qt(p,p)= \frac{-1}{p} \sum_{t\modd{p}}
\left(\frac{-1}{\sqrt{p}}\sum_{x
\modd{p}}\left(\frac{(x^2-x)t+(x^3-3x+1)}{p}\right)\right)^2
\nonumber \\
&=\frac{-1}{p^2}\sum_{t,x,y\modd{p}}\left(\frac{(x^2-x)t+(x^3-3x+1)}{p}\right)\left(\frac{(y^2-y)t+(y^3-3y+1)}{p}\right).
\end{align*}

Let
\begin{align*}
a:&=(x^2-x)(y^2-y),\\
b:&=(x^2-x)(y^3-3y+1)+(x^3-3x+1)(y^2-y),\\
c:&=(x^3-3x+1)(y^3-3y+1),\\
d:&=(x-y)(xy-x+1)(xy-y+1),
\end{align*}
then
\begin{align*}
b^2-4ac&=[(x^2-x)(y^3-3y+1)+(x^3-3x+1)(y^2-y)]^2\\
&-4(x^2-x)(y^2-y)(x^3-3x+1)(y^3-3y+1)\\
&=[(x^2-x)(y^3-3y+1)-(x^3-3x+1)(y^2-y)]^2\\
&=d^2.
\end{align*}
From \cite[Exercise 1.1.9]{Stepanov} we have for $a\not \equiv 0 \modd{p}$ that

\begin{equation} \label{sumofquadratic}
\sum_{t \modd{p}}\left(\frac{at^2+bt+c}{p}\right)=\left\{
\begin{array}{ll} \left(\frac{a}{p}\right)(p-1) & {\rm
if}\: b^2-4ac \equiv 0 \modd{p}, \\
-\left(\frac{a}{p}\right) &{\rm if}\: b^2-4ac \not \equiv 0 \modd{p}.\end{array}\right.
\end{equation}
Now let
$$S:=\sum_{\substack{ x,y \modd{p} \\ a\not \equiv 0 \modd{p} \\ d\equiv 0 \modd{p}}}\left(\frac{(x^2-x)(y^2-y)}{p}\right)=\sum_{\substack{ x,y \modd{p} \\ d\equiv 0 \modd{p}}}\left(\frac{(x^2-x)(y^2-y)}{p}\right).$$
If $a\equiv 0 \modd{p}$ then there are four cases to consider, $(x,y)=(0,0),(0,1),(1,0)$ or $(1,1)$. In all 4 cases $b\equiv 0 \modd{p}$. Then we have that
\begin{align}
\Qt(p,p) &=\frac{-S}{p}+\frac{1}{p^2}\sum_{\substack{x,y \modd{p} \\ a\not \equiv 0 \modd{p} \\ d\not \equiv 0 \modd{p}}}\left(\frac{(x^2-x)(y^2-y)}{p}\right)\nonumber \\
&-\frac{1}{p}\sum_{\substack{ x,y \modd{p} \\ a\equiv b\equiv 0 \modd{p}}}\left(\frac{(x^3-3x+1)(y^3-3y+1)}{p}\right)\nonumber\\
&=\frac{-1}{p}S+\frac{1}{p^2}\sum_{\substack{x,y \modd{p} \\ a\not \equiv 0 \modd{p}}}\left(\frac{(x^2-x)(y^2-y)}{p}\right)\nonumber\\
&-\frac{1}{p}\sum_{\substack{ x,y \modd{p} \\ a\equiv b\equiv 0 \modd{p}}}\left(\frac{(x^3-3x+1)(y^3-3y+1)}{p}\right).\label{annoyinglemma}
\end{align}
First we consider the third sum in $\eqref{annoyinglemma}$ and compute
\begin{align}
&\sum_{\substack{ x,y \modd{p} \\ a\equiv b\equiv 0 \modd{p}}}\left(\frac{(x^3-3x+1)(y^3-3y+1)}{p}\right)\nonumber \\
&=\sum_{x,y \equiv 0,1 \modd{p}}\left(\frac{(x^3-3x+1)(y^3-3y+1)}{p}\right)=2\left[1+\left(\frac{-1}{p}\right)\right] \label{gaahone}.
\end{align}

Next we consider the second sum in $\eqref{annoyinglemma}$ and compute
\begin{align}
\sum_{\substack{x,y \modd{p} \\ a\not \equiv 0 \modd{p}}}\left(\frac{(x^2-x)(y^2-y)}{p}\right)&=\sum_{x,y\modd{p}}\left(\frac{(x^2-x)(y^2-y)}{p}\right) \nonumber \\ &=\left[
\sum_{x\modd{p}}\left(\frac{x^2-x}{p}\right)\right]^2=\left[-\left(\frac{1}{p}\right)\right]^2=1.\label{gaaahtwo}
\end{align}

If $d=(x-y)(xy-x+1)(xy-y+1) \equiv 0 \modd{p}$ then either $x\equiv y\modd{p}$, $xy-x+1\equiv 0\modd{p}$ or $xy-y+1\equiv 0 \modd{p}$. All three equations are satisfied when $x^2-x+1\equiv 0 \modd{p}$, which has at most two solutions $\modd{p}$. This gives
\begin{align}
S=&\sum_{x\modd{p}}\left(\frac{(x^2-x)^2}{p}\right)\nonumber\\
+&\left[\sum_{\substack{xy-x+1 \equiv 0 \modd{p} \\ x\equiv \not \equiv 0 \mod{p}}} + \sum_{\substack{xy-y+1 \equiv 0 \modd{p} \\ x\equiv \not \equiv 0 \mod{p} \\ xy-x+1 \not \equiv 0 \modd{p}}} \right]\left(\frac{(x^2-x)(y^2-y)}{p}\right)\label{sumofthrees}
\end{align}
For the the second two sums in $\eqref{sumofthrees}$ for each $x\modd{p}$ there is at most one $y$ satisfying the equation $x\equiv y \modd{p}$, $xy-x+1\equiv 0 \modd{p}$ or $xy-y+1\equiv 0 \modd{p}$ which gives $S=p+O(1).$

Then substituting $\eqref{gaahone}$, $\eqref{gaaahtwo}$ in $\eqref{annoyinglemma}$ gives Lemma \ref{qstartwo}.
\end{proof}

We remark that for any one-parameter family of elliptic curves over $\QQ(t)$ with non-constant $j(E_t)$, we have the estimate $\Qt(p,p)= -1+O\left( p^{-1/2} \right)$ due to Michel \cite{Michel}, which is used for example in \cite[Section 6.1.3]{Miller} for the same
family.

\begin{lemma} \label{lastlemma} Let $p>2$. Then,
\begin{eqnarray*}
\Qt(1,p^2) &=& 1 + O\left(\frac{1}{p}\right), \\
\Qt(p^2,1) &=& O\left(\frac{1}{p}\right) .
\end{eqnarray*}
\end{lemma}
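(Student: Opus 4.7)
The plan is to reduce both estimates to two ingredients already at hand: the evaluation of $\Qt(p,p)$ in Lemma \ref{qstartwo}, and a trivial count of bad-reduction fibers modulo $p$. Recall that for $p>2$, $C(t) = 2^3(t^2+3t+9)^2$, so $p\mid C(t)$ iff $p\mid t^2+3t+9$. Since $t^2+3t+9$ has the fixed discriminant $-27$, modulo any $p>2$ it has at most two roots, so
$$\#\{t \bmod p : p \mid C(t)\} = O(1), \qquad \#\{t \bmod p : p \nmid C(t)\} = p + O(1).$$

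For the first identity, I would simply unfold the definition. Since $\lambda_t(p^0)=1$, we have
$$\Qt(1,p^2) = \tQtptwo\bigg|_{m_1=0} = \frac{1}{p}\sum_{\substack{t \bmod p\\ p\nmid C(t)}} 1 = 1 + O\!\left(\tfrac{1}{p}\right)$$
by the count above. This needs nothing further.

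For the second identity, the key step is the Hecke relation $\lambda_t(p^2) = \lambda_t(p)^2 - \psi_t(p)$, which holds uniformly in $t$: when $p\nmid C(t)$ it is $U_2(\lambda_t(p)/2) = \lambda_t(p)^2-1$, and when $p\mid C(t)$ both sides collapse to $\lambda_t(p)^2$ because $\psi_t(p)=0$. Summing over $t \bmod p$ gives
$$\sum_{t\bmod p} \lambda_t(p^2) \;=\; \sum_{t\bmod p} \lambda_t(p)^2 \;-\; \#\{t\bmod p : p\nmid C(t)\}.$$
By definition of $\tQtpone$ with $m_1=1$, the first sum on the right equals $-p\,\Qt(p,p)$, which by Lemma \ref{qstartwo} is $p+O(1)$; the second term is also $p+O(1)$ by the count above. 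The two main terms cancel, leaving an $O(1)$ remainder. Dividing by $p$ yields $\Qt(p^2,1) = O(1/p)$, as required.

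There is no real obstacle here; the content of the lemma is a bookkeeping consequence of Lemma \ref{qstartwo} once the Hecke relation at a bad prime is written in the unified form $\lambda_t(p^2) = \lambda_t(p)^2 - \psi_t(p)$. The only subtlety worth spelling out in the write-up is the verification of this uniform relation, together with the (immediate) observation that the small set of $t$ with $p\mid C(t)$ contributes negligibly under the trivial bound $|\lambda_t(p)|\leq 2$.
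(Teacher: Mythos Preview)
Your proof is correct and follows essentially the same route as the paper. The paper writes the second computation as the identity $\Qt(p^2,1) = -\Qt(p,p) - \Qt(1,p^2)$ obtained by splitting the sum over $t$ according to whether $p\mid C(t)$, which is exactly your unified Hecke relation $\lambda_t(p^2)=\lambda_t(p)^2-\psi_t(p)$ summed and divided by $p$; the first part is handled identically.
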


\begin{proof}
For $p>2$, we compute
\begin{equation*}
\Qt(1,p^2)= \frac{1}{p} \sum_{\substack{t \modd p \\ p\nmid C(t)}}
1= 1- \frac{1}{p}\sum_{\substack{t \modd p \\ p \mid C(t)}}1.
\end{equation*}
Since there are at most 2 solutions to the congruence $C(t) \equiv 0 \modd{p}$, this gives
\begin{equation*}
\Qt(1,p^2) = 1 + O\left(\frac{1}{p}\right).
\end{equation*}
We also have that
\begin{equation*}
\Qt(p^2,1) =\frac{1}{p}\sum_{t\modd{p}}\lambda_t(p^2),
\end{equation*}
and from $\eqref{heckerelations}$
\begin{equation*}
\lambda_t(p^2) = \bigg\{\begin{array}{ll}
U_2\left(\frac{\lambda_t(p)}{2}\right) = \lambda_t^2(p)-1 & \qquad
{\rm if}\:(p,C(t))=1, \\
\lambda_t^2(p) & \qquad {\rm if}\: (p,C(t))>1.
\end{array}\bigg.
\end{equation*}
Hence we compute
\begin{align*}
\Qt(p^2,1)&= \frac{1}{p}\sum_{\substack{ t \modd{p} \\ p \nmid
C(t)}} \left(\lambda_t^2(p)-1\right)+
\frac{1}{p}\sum_{\substack{ t \modd{p} \\ p \mid C(t)}} \lambda_t^2(p) \\
&= -\Qt(p,p)-\Qt(1,p^2) \\
&=
-\left(-1+O\left(\frac{1}{p}\right)\right)-\left(1+O\left(\frac{1}{p}\right)\right)
= O\left(\frac{1}{p}\right).
\end{align*}
\end{proof}

Finally, if $p=2$, we have
$\Qt(2^{m_1},2^{m_2})=0$ if $(m_1,m_2) \neq (0,0)$. Now we are now ready to prove the following result.
\begin{theorem} \label{factorHdash}
Let $\Ht$ be given by $\eqref{Htdirichlet}$. Then $\Ht$ has the form
\begin{equation} \label{HAzetafactors}
\Ht(\alpha, \gamma) =
\frac{\zeta(1+2\gamma)\zeta(1+\gamma)}{\zeta(1+\alpha+\gamma)\zeta(1+\alpha)}\At(\alpha,\gamma)
\end{equation}
where $\At(\alpha,\gamma)$ is holomorphic and non-zero for $\RRe(\alpha), \RRe(\gamma) >  -1/4$.
\end{theorem}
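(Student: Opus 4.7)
The plan is to mimic the proof of Theorem \ref{factorH}. Starting from the Euler product
$$\Ht(\alpha,\gamma) = \prod_p E_p^t(\alpha,\gamma), \qquad E_p^t(\alpha,\gamma) := \sum_{m_1 \geq 0,\, m_2 \in \{0,1,2\}} \frac{\tQt(p^{m_1},p^{m_2})}{p^{m_1(\frac12 + \alpha) + m_2(\frac12 + \gamma)}},$$
(where the restriction $m_2 \leq 2$ comes from \eqref{definitionofmut}), I would analyze each Euler factor individually. For $p=2$ the final remark of the preceding discussion gives $E_2^t = 1$. For $p > 2$, Lemmas \ref{qstarone}, \ref{qstartwo} and \ref{lastlemma} supply exact or asymptotic formulae for $\tQt(p^{m_1},p^{m_2})$ in the five cases $(m_1,m_2) \in \{(1,0),(0,1),(1,1),(0,2),(2,0)\}$, which, together with a trivial bound, yields
$$E_p^t(\alpha,\gamma) = 1 - \frac{1+\chi_4(p)}{p^{1+\alpha}} + \frac{1+\chi_4(p)}{p^{1+\gamma}} - \frac{1}{p^{1+\alpha+\gamma}} + \frac{1}{p^{1+2\gamma}} + O\!\left(\frac{1}{p^{2+2\RRe(\alpha)}}\right) + O\!\left(\frac{1}{p^{2+2\RRe(\gamma)}}\right) + O\!\left(\frac{1}{p^{2+\RRe(\alpha+\gamma)}}\right),$$
where the tail over $m_1 + m_2 \geq 3$ is absorbed using the Deligne-Hecke bound $|\lambda_t(p^{m_1})| \leq m_1 + 1$ together with $|\mu_t(p^{m_2})| \leq 2$.

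Reading off the poles and zeros, the natural candidates for factors are the Euler factors of $\zeta(1+2\gamma)$, $\zeta(1+\gamma)$ (poles) and $\zeta(1+\alpha+\gamma)$, $\zeta(1+\alpha)$ (zeros). I would therefore define
$$\At(\alpha,\gamma) := \Ht(\alpha,\gamma) \cdot \frac{\zeta(1+\alpha+\gamma)\,\zeta(1+\alpha)}{\zeta(1+2\gamma)\,\zeta(1+\gamma)}$$
and show, Euler factor by Euler factor, that this has an absolutely convergent product representation in $\RRe(\alpha),\RRe(\gamma) > -1/4$. Multiplying $E_p^t$ by $(1-p^{-(1+2\gamma)})(1-p^{-(1+\gamma)})/[(1-p^{-(1+\alpha+\gamma)})(1-p^{-(1+\alpha)})]$ and expanding to first order in $1/p$, the four expected cancellations occur except for a residual first-order $\chi_4$-contribution of the shape $\chi_4(p)(p^{-(1+\gamma)} - p^{-(1+\alpha)})$, which is exactly the first-order Euler expansion of $L(1+\gamma,\chi_4)/L(1+\alpha,\chi_4)$. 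Writing
$$\At(\alpha,\gamma) = \frac{L(1+\gamma,\chi_4)}{L(1+\alpha,\chi_4)} \, \tilde{\At}(\alpha,\gamma),$$
the remaining product $\tilde{\At}$ has Euler factors of the form $1 + O(p^{-2-2\min(\RRe \alpha, \RRe \gamma)})$, which converges absolutely (and defines a nonzero function) in the region $\RRe(\alpha), \RRe(\gamma) > -1/4$. Since $L(1+s,\chi_4)$ is holomorphic and nonvanishing in a neighbourhood of $s=0$, the Dirichlet $L$-factor can be absorbed into $\At$, leaving the factorisation stated in \eqref{HAzetafactors}.

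The main obstacle is twofold. First, one must verify carefully that the $\chi_4$-driven first-order contributions line up exactly with the Euler factors of $L(1+\gamma,\chi_4)/L(1+\alpha,\chi_4)$, and that this rearrangement does not introduce new poles or zeros inside $\RRe(\alpha),\RRe(\gamma) > -1/4$; the non-vanishing of $L(1,\chi_4) = \pi/4$ together with analytic continuation is what makes the absorption into $\At$ legitimate. Second, to push the convergence of the remainder all the way down to $\RRe(\alpha),\RRe(\gamma) > -1/4$ one needs the $m_1+m_2 \geq 3$ tail of $E_p^t$ to decay faster than the naive bound suggests; this should follow from the Michel-type estimate $\tQt(p^{m_1},p^{m_2}) = O(p^{-1/2})$ whenever $m_1 + m_2 \geq 2$ (the $m_1=m_2=1$ case being Lemma \ref{qstartwo} and the $(2,0),(0,2)$ cases being Lemma \ref{lastlemma}), generalised to $m_1+m_2 \geq 3$ by combining the Hecke relations with the same square-root cancellation argument used in the proof of Lemma \ref{qstartwo}.
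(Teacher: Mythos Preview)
Your proposal is correct and follows the same route as the paper: expand each local factor via Lemmas \ref{qstarone}--\ref{lastlemma}, read off the four first-order terms that produce the $\zeta$-ratio, and absorb everything else into $A$. The only cosmetic difference is that the paper packages the $(1+\chi_4(p))$ contributions through the Dedekind zeta function $\zeta_{\mathbb{Q}(i)}(s)=\zeta(s)L(s,\chi_4)$ rather than isolating $L(s,\chi_4)$ explicitly as you do, and it simply asserts analyticity of $A$ in $\RRe(\alpha),\RRe(\gamma)>-1/4$ without working through either of the two obstacles you flag (possible $L(s,\chi_4)$-zeros and the $m_1+m_2\geq 3$ tail); your treatment is in fact more careful on both points.
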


\begin{proof} We have that $\Qt(1,1) = 1$, and from $\eqref{Htdirichlet}$ and \eqref{HtinQform}, we have
\begin{align*}
&\Ht(\alpha, \gamma) = \prod_p\left(1+\sum_{m_1 \geq 1}
\frac{\tQtpzero}{\denominatoralpha} + \sum_{m_1 \geq
0}\frac{\tQtpone}{p^{m_1(\frac{1}{2} + \alpha) +\frac{1}{2} +
\gamma}} + \sum_{m_1 \geq 0}\frac{\tQtptwo}{p^{m_1(\frac{1}{2} +
\alpha) + 1+ 2\gamma}}\right).\\
&=\prod_p\left(1
+\frac{\Qt(p,1)}{p^{\frac{1}{2}+\alpha}}+\frac{\Qt(p^2,1)}{p^{1+2\alpha}}+\frac{\Qt(1,p)}{p^{\frac{1}{2}+\gamma}}
+\frac{\Qt(p,p)}{p^{1+\alpha+\gamma}}+\frac{\Qt(1,p^2)}{p^{1+2\gamma}}+T_p{(\alpha,\gamma)}\right),
\end{align*}
where $$T_p{(\alpha,\gamma)}:=\sum_{\substack{m_1\geq 3, m_2=0 \\ m_1\geq 2, m_2=1 \\ m_1\geq 1, m_2=2}}\frac{\Qt(p^{m_1},p^{m_2})}{p^{m_1(\frac{1}{2}+\alpha)+m_2(\frac{1}{2}+\gamma)}}.$$

Using the
formulas from Lemma \ref{qstarone}, \ref{qstartwo} and \ref{lastlemma} in $H(\alpha, \gamma)$, we obtain
\begin{align*}
\Ht(\alpha, \gamma)
= \prod_{p>2}\Bigg(1&+\left(1+\chi_4(p)\right)\left(\frac{1}{p^{1+\gamma}}-\frac{1}{p^{1+\alpha}}
\right)-\frac{1}{p^{1+\alpha+\gamma}}+\frac{1}{p^{1+2\gamma}}\\
&+T_p{(\alpha,\gamma)}+O\left(\frac{1}{p^{2+2\alpha}}+\frac{1}{p^{2+2\gamma}}+\frac{1}{p^{2+\alpha+\gamma}}\right)\Bigg).
\end{align*}

Then,
\begin{equation}
\Ht(\alpha, \gamma)= A(\alpha, \gamma) \prod_{p > 2}
\left(1+\left(1+\chi_4(p)\right)\left(\frac{1}{p^{1+\gamma}}-\frac{1}{p^{1+\alpha}}
\right)-\frac{1}{p^{1+\alpha+\gamma}}
+\frac{1}{p^{1+2\gamma}}\right)\label{etavgeuler}
\end{equation}
where $A(\alpha, \gamma)$ is analytic for $\RRe(\alpha), \RRe(\gamma) >  -1/4$.

Let $\zeta_K(s)$ be the Dedekind zeta function of $K={\mathbb{Q}}(i)$, i.e.,
\begin{align*}
\zeta_K(s)&=\left(1-\frac{1}{2^s}\right)^{-1}\prod_{p\equiv 1
\modd{4}}\left(1-\frac{1}{p^s}\right)^{-2}\prod_{p\equiv3
\modd{4}}\left(1-\frac{1}{p^{2s}}\right)^{-1} .
\end{align*}
Since $\zeta_K(s)$ has a simple pole at $s=1$, the factors
$$
\frac{1+\chi_4(p)}{p^{1+\gamma}} \quad \mbox{and} \quad -\frac{1+\chi_4(p)}{p^{1+\alpha}}
$$
contribute respectively a pole and a zero to $H(\alpha, \gamma)$.
So now we factor out the zeta factors. We can then write (renaming $A(\alpha, \gamma)$)
\begin{equation} \label{htzetas}
\Ht(\alpha, \gamma) =
\frac{\zeta(1+2\gamma)\zeta(1+\gamma)}{\zeta(1+\alpha+\gamma)\zeta(1+\alpha)}\At(\alpha,\gamma),
\end{equation}
where $\At(\alpha,\gamma)$ is analytic for $\RRe(\alpha), \RRe(\gamma) >  -1/4$.
\end{proof}
Finally, we define
\begin{equation} \label{ytfam}
\Yt(\alpha,\gamma):=\frac{\zeta(1+2\gamma)\zeta(1+\gamma)}{\zeta(1+\alpha+\gamma)\zeta(1+\alpha)}.
\end{equation}

Working as in our previous family, we replace $R_1(\alpha,\gamma)$ in $\eqref{rstarone}$
with $Y(\alpha,\gamma) A(\gamma, \alpha)$, and $R_2(\alpha,\gamma)$ in $\eqref{rstartwo}$
with
\begin{equation*}
-\left(\frac{\sqrt{C(t)}}{2\pi}\right)^{-2\alpha}\frac{\Gamma\left(1-\alpha\right)}{\Gamma\left(1
+\alpha\right)}\Yt(-\alpha,\gamma)\At(-\alpha,\gamma).
\end{equation*}

Then, replacing each summand in \eqref{replacehere}, we obtain the following conjecture.

\begin{conjecture}[Ratios Conjecture] \label{tratiosconjecture}
Let $\varepsilon >0$.
Let $\alpha, \gamma \in \CC$ such that $\RRe(\alpha) > -1/4$, $\RRe(\gamma) \gg \frac{1}{\log{X}}$
and $\IIm(\alpha), \IIm(\gamma) \ll_\varepsilon X^{1-\varepsilon}$. Then
\begin{align*} \nonumber
& \frac{1}{|\Ft(X)|} \sum_{E_t \in \Ft(X)} \frac{L(\frac{1}{2} +
\alpha,E_t)}{L(\frac{1}{2} + \gamma,E_t)}\\ \nonumber
& = \frac{1}{|\Ft(X)|} \sum_{E_t \in \Ft(X)} \left[\Yt(\alpha,\gamma)\At(\alpha,\gamma) - \left(\frac{\sqrt{C(t)}}{2\pi}\right)^{-2\alpha}\frac{\Gamma(1-\alpha)}{\Gamma(1+\alpha)} \Yt(-\alpha,\gamma)\At(-\alpha,\gamma) \right] \\
& + O(X^{-1/2+\varepsilon})
\end{align*}
where $\Yt(\alpha,\gamma)$ is defined in $\eqref{ytfam}$ and
$\At(\alpha,\gamma)$ in $\eqref{htzetas}$.
\end{conjecture}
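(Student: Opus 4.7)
The plan is to follow the standard Conrey--Farmer--Zirnbauer recipe \cite{CFZ}, in direct parallel with the derivation of Conjecture \ref{ratiosconjecture} for the first family. Starting from the ratio \eqref{replacehere}, I would replace the numerator $L(1/2+\alpha, E_t)$ by the two pieces of the approximate functional equation \eqref{AFE} specialised to this family, namely the principal sum \eqref{tapprox-principal} and the dual sum \eqref{tapprox-dual} (with the uniform sign $\omega_{E_t} = -1$ built in), and replace the denominator $1/L(1/2+\gamma,E_t)$ by the Dirichlet series \eqref{tinverse}. Ignoring convergence and error terms as the recipe prescribes, this expresses the ratio as $R_1(\alpha,\gamma) + R_2(\alpha,\gamma)$ with $R_1, R_2$ as in \eqref{rstarone}--\eqref{rstartwo}.

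Next I would process $R_1(\alpha,\gamma)$. The key step is to replace each $(m_1,m_2)$-summand
\[
\frac{1}{|\Ft(X)|}\sum_{E_t \in \Ft(X)} \lambda_t(m_1)\mu_t(m_2)
\]
by its limiting average, which by Lemma \ref{qstarsecondfam} is $\widetilde{Q}^*(m_1,m_2)$. After this substitution, interchanging the sum over $(m_1,m_2)$ with the (now trivial) average over the family produces exactly the Dirichlet series $H(\alpha,\gamma)$ of \eqref{Htdirichlet}. Using the multiplicativity of $\widetilde{Q}^*(m_1,m_2)$ and the factorisation of Theorem \ref{factorHdash}, we obtain
\[
R_1(\alpha,\gamma) \leadsto H(\alpha,\gamma) = Y(\alpha,\gamma)\, A(\alpha,\gamma).
\]

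For the dual piece $R_2(\alpha,\gamma)$, the same recipe applies, except that $\alpha$ is effectively replaced by $-\alpha$ in the $m_1$-variable. Pulling the common factor $X_t(1/2+\alpha)$ out of the inner sums and averaging the remaining $(m_1,m_2)$-summand again by Lemma \ref{qstarsecondfam} leads to $H(-\alpha,\gamma) = Y(-\alpha,\gamma)\, A(-\alpha,\gamma)$. Using the explicit form of $X_t(1/2+\alpha) = (\sqrt{C(t)}/2\pi)^{-2\alpha}\,\Gamma(1-\alpha)/\Gamma(1+\alpha)$ from \eqref{definitionofXE} and keeping the overall minus sign from the functional equation $\omega_{E_t} = -1$, we get
\[
R_2(\alpha,\gamma) \leadsto -\left(\frac{\sqrt{C(t)}}{2\pi}\right)^{-2\alpha} \frac{\Gamma(1-\alpha)}{\Gamma(1+\alpha)}\, Y(-\alpha,\gamma)\, A(-\alpha,\gamma),
\]
averaged over $E_t \in \Ft(X)$ (the conductor $C(t)$ depending on $t$ is the only reason the sum over $t$ survives in the final statement). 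Summing these two contributions yields the claimed main term, and the error term $O(X^{-1/2+\varepsilon})$, together with the ranges on $\RRe(\gamma)$ and $\IIm(\alpha),\IIm(\gamma)$, is postulated as part of the ratios conjecture rather than derived, exactly as in Conjecture \ref{ratiosconjecture}.

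The main obstacle is conceptual rather than computational: justifying rigorously the replacement of $\lambda_t(m_1)\mu_t(m_2)$ by its average $\widetilde{Q}^*(m_1,m_2)$ simultaneously for \emph{all} $(m_1,m_2)$, together with the discarding of off-diagonal and error terms arising from the approximate functional equation. This is exactly the step that the CFZ recipe leaves unproven, and it is the reason the final statement is a conjecture. The essential algebraic input (multiplicativity of $\widetilde{Q}^*$, identification of the zeta-factors, and holomorphy of $A$ in the relevant region) has already been supplied by Lemmas \ref{qstarone}--\ref{lastlemma} and Theorem \ref{factorHdash}, so once the recipe is accepted the assembly of the conjecture is essentially mechanical.
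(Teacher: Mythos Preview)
Your proposal is correct and follows the same approach as the paper: apply the CFZ recipe by replacing the numerator with the principal and dual sums \eqref{tapprox-principal}--\eqref{tapprox-dual}, replace each $(m_1,m_2)$-summand by its average $\widetilde{Q}^*(m_1,m_2)$ via Lemma \ref{qstarsecondfam}, and invoke Theorem \ref{factorHdash} to identify $R_1 \leadsto Y(\alpha,\gamma)A(\alpha,\gamma)$ and $R_2 \leadsto -(\sqrt{C(t)}/2\pi)^{-2\alpha}\tfrac{\Gamma(1-\alpha)}{\Gamma(1+\alpha)}Y(-\alpha,\gamma)A(-\alpha,\gamma)$, with the error term and parameter ranges postulated rather than derived. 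This is exactly how the paper arrives at the conjecture.
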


As in the previous family, we now use $\alpha=\gamma=r$. We first show that $H(r,r)= A(r,r)=1$. For the previous family, we had a closed form for $H(\alpha,\gamma)$ that
we used to show that $H(r,r)=1$, but this is in fact true for any family
by the Hecke relations as we show in the next lemma.

\begin{lemma} \label{generalEF}
We have
\begin{equation}
H(r,r) =A(r,r) = 1.
\end{equation}
\end{lemma}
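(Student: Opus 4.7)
The plan is to exploit the Euler product structure of $\tilde{Q}^*$ and reduce the identity to a local computation at each prime. By the multiplicativity established in Lemma \ref{qstarsecondfam}, I would write
\[
H(r,r) = \prod_p E_p(r,r), \qquad E_p(r,r) := \sum_{m_1, m_2 \geq 0} \frac{\tilde{Q}^*(p^{m_1}, p^{m_2})}{p^{(m_1+m_2)(1/2+r)}},
\]
so the problem reduces to showing $E_p(r,r) = 1$ for every prime $p$.

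For $p > 2$, the next step is to unfold $\tilde{Q}^*(p^{m_1}, p^{m_2}) = \frac{1}{p} \sum_{t \bmod p} \lambda_t(p^{m_1}) \mu_t(p^{m_2})$ and swap the order of summation (justified by absolute convergence when $\RRe(r) > 0$), giving
\[
E_p(r,r) = \frac{1}{p} \sum_{t \bmod p} \bigg( \sum_{m_1 \geq 0} \frac{\lambda_t(p^{m_1})}{p^{m_1(1/2+r)}} \bigg) \bigg( \sum_{m_2 \geq 0} \frac{\mu_t(p^{m_2})}{p^{m_2(1/2+r)}} \bigg).
\]
The key observation, and really the whole content of the lemma, is that by the local Euler products in \eqref{after3.3} and \eqref{tinverse} the two inner series are precisely the local factors at $p$ of $L(\tfrac{1}{2}+r, E_t)$ and of its reciprocal $1/L(\tfrac{1}{2}+r, E_t)$ for each individual $t$, so their product collapses to $1$. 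Thus $E_p(r,r) = \frac{1}{p}\sum_{t \bmod p} 1 = 1$. At $p = 2$ the coefficients $\lambda_t(2^k)$ vanish for $k \geq 1$ (so $\mu_t(2^k)$ vanishes as well), as noted in the paragraph preceding Theorem \ref{factorHdash}, leaving only the $(0,0)$ term and yielding $E_2(r,r) = \tilde{Q}^*(1,1) = 1$.

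Putting these together gives $H(r,r) = 1$. For the second claim, a direct substitution shows
\[
Y(r,r) = \frac{\zeta(1+2r)\zeta(1+r)}{\zeta(1+2r)\zeta(1+r)} = 1
\]
identically in $r$, so the factorization $H(\alpha,\gamma) = Y(\alpha,\gamma) A(\alpha,\gamma)$ of Theorem \ref{factorHdash} immediately forces $A(r,r) = 1$. The identity, proven a priori for $\RRe(r) > 0$, extends by analyticity of $A$ throughout the region $\RRe(r) > -1/4$.

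The main (minor) obstacle is justifying the interchange of summation and the absolute convergence of the local Dirichlet series, but this is entirely routine for $\RRe(r) > 0$ via the bound $|\lambda_t(p^m)| \leq m+1$ from the Hecke relations; the crux of the argument is purely structural and the same strategy works without modification for the Euler factors $E_2(r,r)$ and $E_3(r,r)$ of the first family, as advertised at the end of the proof of Theorem \ref{factorH}.
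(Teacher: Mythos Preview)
Your proof is correct and follows essentially the same route as the paper: write $H(r,r)$ as an Euler product, unfold $\tilde Q^*$ as an average over $t\bmod p$, and recognize the inner $m_1$- and $m_2$-sums as the local Euler factor of $L(\tfrac12+r,E_t)$ and its reciprocal, whose product is $1$. The paper treats all primes uniformly rather than singling out $p=2$, and deduces $A(r,r)=1$ directly from \eqref{htzetas}, but these are cosmetic differences.
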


\begin{proof}
Using $\eqref{Htdirichlet}$ and the definition of $\Qt(m_1, m_2)$ from Lemma \ref{qstarsecondfam}, we have that
\begin{equation*}
H(r,r) =\prod_p \frac{1}{p} \sum_{t \modd{p}}
\sum_{m_1} \frac{\lambda_t(p^{m_1})}{p^{m_1(\frac{1}{2} + r)}}
\sum_{m_2} \frac{\mu_t(p^{m_2})}{p^{m_2(\frac{1}{2} + r)}}.
\end{equation*}
From the Hecke relations \eqref{after3.3} and \eqref{tinverse}, the $m_1$-sum is
$$\left(1-\frac{\lambda_t(p)}{p^s}+\frac{\psi_t(p)}{p^{2s}}\right)^{-1}
$$
and the $m_2$-sum is
$$\left(1-\frac{\lambda_t(p)}{p^s}+\frac{\psi_t(p)}{p^{2s}}\right).
$$
This proves that $H(r,r)=1$, and $A(r,r)=1$ by \eqref{htzetas}.
\end{proof}

To get the one-level density for the family $\Ft$, we have to differentiate the result of Conjecture
\ref{tratiosconjecture} with respect to $\alpha$ and use $\eqref{eq:Cauchy}$. We define
\begin{equation}\label{atprime}
\At_\alpha(r,r) :=
\frac{\partial}{\partial \alpha}\At(\alpha,\gamma)\bigg|_{\alpha=\gamma=r},
\end{equation}
and we obtain the following theorem.

\begin{theorem} \label{tratiostheoremone} Let $\varepsilon > 0$, and $r \in \CC$.
Assuming the Ratios Conjecture \ref{tratiosconjecture},  $\RRe(r) \gg \frac{1}{\log{X}}$ and
$\IIm(r) \ll X^{1-\varepsilon}$,
we have
\begin{align*} \nonumber
& \frac{1}{|\Ft(X)|} \sum_{E_t \in \Ft(X)} \frac{L'(\frac{1}{2} +
r,E_t)}{L(\frac{1}{2} + r,E_t)} \\
 =& \frac{1}{|\Ft(X)|} \sum_{E_t \in
\Ft(X)}  \bigg[-\frac{\zeta'}{\zeta}\left(1+2r\right)
\nonumber
-\frac{\zeta'}{\zeta}\left(1+r\right)+\At_\alpha(r,r)
\nonumber \\
+& \left(\frac{\sqrt{C(t)}}{2\pi}\right)^{-2r}\frac{\Gamma(1-
r)}{\Gamma(1+r)}\frac{\zeta(1+2r)\zeta(1+r)}{\zeta(1-r)}\At(-r,r)\bigg]+O(X^{-1/2+\varepsilon})
\end{align*}
where $\At_\alpha(r,r)$ is defined in $\eqref{atprime}$.
\end{theorem}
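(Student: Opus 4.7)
The plan is to differentiate the statement of Conjecture \ref{tratiosconjecture} with respect to $\alpha$ and then specialise to $\alpha=\gamma=r$, mimicking the argument that led to Theorem \ref{ratiostheorem} in the first family. The key simplification is Lemma \ref{generalEF}, which gives $\Ht(r,r)=\At(r,r)=1$, so that only one factor in each product-rule expansion actually contributes.

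For the first (``principal'') term $\Yt(\alpha,\gamma)\At(\alpha,\gamma)$, taking the logarithmic derivative of $\Yt$ yields
\[
\frac{\partial}{\partial\alpha}\Yt(\alpha,\gamma)\bigg|_{\alpha=\gamma=r}
= -\frac{\zeta'}{\zeta}(1+2r)-\frac{\zeta'}{\zeta}(1+r),
\]
since $\Yt(r,r)=1$. Combining with $\At(r,r)=1$ produces $-\frac{\zeta'}{\zeta}(1+2r)-\frac{\zeta'}{\zeta}(1+r)+\At_\alpha(r,r)$, which accounts for the first three summands in the claimed formula.

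For the ``dual'' term, the crucial observation is that $\Yt(-\alpha,\gamma)$ vanishes at $\alpha=\gamma=r$ because its denominator contains $\zeta(1-\alpha+\gamma)$, which has a simple pole at $\alpha=\gamma$. Consequently, when the derivative is distributed over the product $\left(\sqrt{C(t)}/(2\pi)\right)^{-2\alpha}\frac{\Gamma(1-\alpha)}{\Gamma(1+\alpha)}\Yt(-\alpha,\gamma)\At(-\alpha,\gamma)$, every term except the one where $\partial_\alpha$ hits $\Yt(-\alpha,\gamma)$ is killed. Writing $1/\zeta(1-\alpha+\gamma)=(\gamma-\alpha)+O((\gamma-\alpha)^2)$ near $\alpha=\gamma$, one gets $\partial_\alpha[1/\zeta(1-\alpha+\gamma)]_{\alpha=\gamma}=-1$, so that
\[
\frac{\partial}{\partial\alpha}\Yt(-\alpha,\gamma)\bigg|_{\alpha=\gamma=r}
= -\frac{\zeta(1+2r)\zeta(1+r)}{\zeta(1-r)}.
\]
Together with the overall minus sign in front of the dual piece of Conjecture \ref{tratiosconjecture}, this produces exactly the last displayed term of the theorem, with $\At(-r,r)$ multiplying.

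The final step is to justify that the error term $O(X^{-1/2+\varepsilon})$ survives termwise differentiation. This is done exactly as in the proof of Theorem \ref{ratiostheorem}: denoting by $R(\alpha)$ the difference between the two sides of Conjecture \ref{tratiosconjecture} (viewed as a function of $\alpha$ in a suitable region containing a circle $C$ of radius $r_0\asymp 1$ around $\alpha_0$), Cauchy's integral formula gives
\[
|R'(\alpha_0)| = \left|\frac{1}{2\pi i}\oint_C\frac{R(\alpha)}{(\alpha-\alpha_0)^2}\,d\alpha\right| \ll \max_{\alpha\in C}|R(\alpha)| = O(X^{-1/2+\varepsilon}),
\]
under the assumed conditions on $\RRe(r)$ and $\IIm(r)$. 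The main obstacle is really bookkeeping rather than analytical: one must carefully track that the pole-zero cancellation between $\Yt(\alpha,\gamma)$ at $\alpha=\gamma=r$ and $\Yt(-\alpha,\gamma)$ at the same point (which is a zero) leaves only the transparent contributions above, and that no other terms appear from the implicit dependence of $A$ on its arguments.
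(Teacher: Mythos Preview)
Your proposal is correct and follows essentially the same approach as the paper: differentiate each of the two terms in Conjecture \ref{tratiosconjecture} with respect to $\alpha$, use $\Yt(r,r)=\At(r,r)=1$ for the principal piece and the vanishing $\Yt(-r,r)=0$ (from the pole of $\zeta(1-\alpha+\gamma)$) for the dual piece, arriving at exactly the same derivative computations. The paper's own proof omits the Cauchy integral formula justification for the error term here, tacitly relying on the identical argument already given for Theorem \ref{ratiostheorem}; your inclusion of it is a minor elaboration, not a different route.
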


\begin{proof}
We have that
\begin{equation*}
\Yt_\alpha(r,r)=\frac{\partial}{\partial \alpha}\Yt(\alpha,\gamma)\bigg|_{\alpha=\gamma=r}=-\frac{\zeta'}{\zeta}\left(1+2r\right)-\frac{\zeta'}{\zeta}\left(1+r\right).
\end{equation*}
Differentiating the first term in the sum in Conjecture \ref{tratiosconjecture}
gives
\begin{equation*}
-\frac{\zeta'}{\zeta}\left(1+2r\right)-\frac{\zeta'}{\zeta}\left(1+r\right)+\At_\alpha(r,r).
\end{equation*}
For the second term, we compute that
\begin{equation*}
\Yt_\alpha(-r,r)=\frac{\partial}{\partial \alpha}\Yt(-\alpha,\gamma)\bigg|_{\alpha=\gamma=r}=-\frac{\zeta(1+2r)\zeta(1+r)}{\zeta(1-r)}.\label{yalpha}
\end{equation*}
Since $Y(-r,r)=0$, differentiating the second term in Conjecture
\ref{tratiosconjecture} gives
\begin{equation*}
\frac{\partial}{\partial \alpha}R_2(\alpha,\gamma)\bigg|_{\alpha=\gamma=r}=\left(\frac{\sqrt{C(t)}}{2\pi}\right)^{-2r}\frac{\Gamma(1-
r)\zeta(1+2r)\zeta(1+r)\At(-r,r)}{\Gamma(1+r)\zeta(1-r)}\label{rtwoprime}.
\end{equation*}
\end{proof}

As before, we now use the relation
\begin{equation*}
\frac{L'(s,E_t)}{L(s,E_t)}=\frac{X_t'(s)}{X_t(s)}-\frac{L'(1-s,E_t)}{L(1-s,E_t)}
\end{equation*}
where
\begin{equation} \label{logXtevaluated}
\frac{X_t'(s)}{X_t(s)}\bigg|_{s=\frac{1}{2} + r} =
-2\log\left(\frac{\sqrt{C(t)}}{2\pi}\right)-\frac{\Gamma'}{\Gamma}(1-r)
- \frac{\Gamma'}{\Gamma}(1+r)
\end{equation}
and from $\eqref{logXtevaluated}$, we obtain
\begin{align*}
D(\F; \phi, X) & = \frac{1}{|\Ft(X)|}\sum_{E_t\in
\Ft(X)}\frac{1}{2\pi i} \int_{(c-\frac{1}{2})}
\Bigg[-2\left[\frac{\zeta'}{\zeta}\left(1+2r\right)
+\frac{\zeta'}{\zeta}\left(1+r\right)\right]
\nonumber \\
&+2\At_\alpha(r,r)+2\left(\frac{\sqrt{C(t)}}{2\pi}\right)^{-2r}\frac{\Gamma(1-
r)}{\Gamma(1+r)}\frac{\zeta(1+2r)\zeta(1+r)}{\zeta(1-r)}\At(-r,r)
\nonumber \\
&-\frac{X'\left(\frac{1}{2}+r,E_t\right)}{X\left(\frac{1}{2}+r,E_t\right)}
\Bigg]\phi(-ir)dr + O(X^{-1/2+\varepsilon}).
\end{align*}

As in the previous family we move the integral from
$\mbox{Re}(s)=c-\frac{1}{2}=c'$ to $\mbox{Re}(s)=0$ by integrating
over the rectangle $R$ from $c'-iT$ to $c'+iT$ to $iT$ to $-iT$ and
back to $c'-iT$, and letting $T \rightarrow \infty$. The two
horizontal integrals tend to 0, and we only have to consider the
vertical integrals. From $\eqref{logXtevaluated}$ we have that the
integrand becomes
\begin{align*}
F(r) &= \Bigg[-2\left[\frac{\zeta'}{\zeta}\left(1+2r\right)
+\frac{\zeta'}{\zeta}\left(1+r\right)\right]
\nonumber \\
&+2\At_\alpha(r,r)+2\left(\frac{\sqrt{C(t)}}{2\pi}\right)^{-2r}\frac{\Gamma(1-
r)}{\Gamma(1+r)}\frac{\zeta(1+2r)\zeta(1+r)}{\zeta(1-r)}\At(-r,r)
\nonumber \\
&-\left(-2\log\left(\frac{\sqrt{C(t)}}{2\pi}\right)-\frac{\Gamma'}{\Gamma}(1-r)
- \frac{\Gamma'}{\Gamma}(1+r)\right) \Bigg].
\end{align*}
We make use of the Laurent series
\begin{equation*}
\zeta(1-s)^{-1} =-s-\gamma_0s^2-(\gamma_0^2+\gamma_1)s^3 +\cdots,
\end{equation*}
where the $\gamma_n$ are the Stieltjes constants. This gives
\begin{align*}
F(r)&=-2\left(\frac{-1}{2r}+O(1)\right)-2\left(\frac{-1}{r}+O(1)\right)\\
&+2\left(1+O(r)\right)\left(\frac{1}{2r}+O(1)\right)\left(\frac{1}{r}+O(1)\right)\left(-r+O(r^2)\right)+O(1)\\
&=\frac{2}{r}+O(1).
\end{align*}
So there is a pole at $r=0$ with residue $2$ on the boundary of the rectangle $R$.Then $F(r) - \frac{2}{r}$
is an analytic function inside and on the contour $R$. Setting $r=iu$ as in the previous family, and using the
fact that
$$
\frac{1}{|\Ft(X)|} \sum_{E_t \in \Ft(X)} \frac{1}{2 \pi i} \int_{(c')} \frac{2 \phi(-ir)}{r} dr = 2 \phi(0),
$$
we obtain the following result.

\begin{theorem} \label{tratiostheoremtwo}
Assuming the Ratios Conjecture \ref{tratiosconjecture}, the one-level density of
the family $\Ft$ defined by \eqref{oneparameterfamily} is given by
\begin{align} \nonumber
D(\F; \phi, X) & = \frac{1}{|\Ft(X)|}\frac{1}{2\pi}
\int_{-\infty}^{\infty}\phi(u)\sum_{E_t\in \Ft(X)}
\Bigg[2\log\left(\frac{\sqrt{C(t)}}{2\pi}\right)+\frac{\Gamma'}{\Gamma}\left(1+iu\right)\nonumber
\\
&+\frac{\Gamma'}{\Gamma}\left(1-iu\right)-2\bigg(\frac{\zeta'}{\zeta}\left(1+2iu\right)
+\frac{\zeta'}{\zeta}\left(1+iu\right)\bigg)+2\At_\alpha(iu,iu)\nonumber \\
&+2\left(\frac{\sqrt{C(t)}}{2\pi}\right)^{-2iu}\frac{\Gamma(1-
iu)}{\Gamma(1+iu)}\frac{\zeta(1+2iu)\zeta(1+iu)}{\zeta(1-iu)}\At(-iu,iu)-\frac{2}{iu}\Bigg]du
\nonumber \\
&+ \phi(0)+ O(X^{-1/2+\varepsilon}).\label{tfamoneleveldensity}
\end{align}
\end{theorem}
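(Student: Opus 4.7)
The plan is to follow exactly the same pipeline that produced Theorem~\ref{oneleveldensityresult} for the first family, but now using Theorem~\ref{tratiostheoremone} for the logarithmic derivative and carefully tracking the pole at $r=0$ which will give a \emph{full} $\phi(0)$ rather than the half $\phi(0)/2$ seen in the first family. Concretely, I start from the Cauchy-type expression \eqref{eq:Cauchy} adapted to this family, split the contour into $\int_{(c)} - \int_{(1-c)}$, and use the change of variable $s\mapsto 1-s$ together with the logarithmic derivative of the functional equation
\begin{equation*}
\frac{L'(s,E_t)}{L(s,E_t)}=\frac{X_t'(s)}{X_t(s)}-\frac{L'(1-s,E_t)}{L(1-s,E_t)}
\end{equation*}
to collapse the two contour integrals into a single integral over $\RRe(s)=c$ of
\[
2\,\frac{L'(s,E_t)}{L(s,E_t)}-\frac{X_t'(s)}{X_t(s)}.
\]
Then I substitute $s=\tfrac{1}{2}+r$ and bring the average over $E_t\in\Ft(X)$ inside the integral, so that I can replace the averaged logarithmic derivative by the closed form from Theorem~\ref{tratiostheoremone}; pulling the sum back outside yields exactly the integrand $F(r)$ written in the excerpt, integrated along $\RRe(r)=c'=c-\tfrac{1}{2}$.

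Next I shift the contour from $\RRe(r)=c'$ to $\RRe(r)=0$ using a tall rectangle with vertices $c'\pm iT$ and $\pm iT$, letting $T\to\infty$. The horizontal integrals vanish in the limit from the Schwartz decay of $\phi$ and the polynomial growth of the other factors on the boundary, so only the two vertical integrals contribute. The crucial analytic input is the Laurent expansion already computed in the excerpt, which shows $F(r)=2/r+O(1)$ near $r=0$; thus $F(r)-2/r$ is analytic inside and on the contour $R$, and by Cauchy's theorem its integrals over $\RRe(r)=c'$ and $\RRe(r)=0$ agree. Writing $r=iu$ on the new contour converts the integrand into the explicit $u$-expression displayed in \eqref{tfamoneleveldensity}, together with $-\frac{X_t'(1/2+r,E_t)}{X_t(1/2+r,E_t)}$ evaluated via \eqref{logXtevaluated} to produce the terms $2\log(\sqrt{C(t)}/2\pi)+\Gamma'/\Gamma(1+iu)+\Gamma'/\Gamma(1-iu)$.

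Finally I account for the residual $2/r$ piece. Using the standard contour identity already invoked in the first family,
\[
\frac{1}{2\pi i}\int_{(c')}\frac{2\phi(-ir)}{r}\,dr=\phi(0),
\]
(which itself comes from closing the contour and picking up the residue at $r=0$, noting $\phi$ is even), this contributes precisely $\phi(0)$ to $D(\Ft;\phi,X)$, independently of $t$, so it factors out of the sum over $E_t\in\Ft(X)$. The error term $O(X^{-1/2+\varepsilon})$ is preserved through the contour shift because the shift is uniform in the parameters and the bound in Conjecture~\ref{tratiosconjecture} (hence Theorem~\ref{tratiostheoremone}) holds uniformly on the contour. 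Combining everything yields formula \eqref{tfamoneleveldensity}.

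The step I expect to require the most care is the contour shift, specifically verifying the vanishing of the horizontal pieces as $T\to\infty$ uniformly in the family (using the decay of $\phi$ against the Stirling growth of $\Gamma(1-r)/\Gamma(1+r)$ and the polynomial bounds on $\zeta$, $\zeta'/\zeta$, and $A(\alpha,\gamma)$ in the strip $-1/4<\RRe(r)<1$), and confirming that after singling out $2/r$ the remaining integrand is genuinely analytic at $r=0$ so that no extra residue is picked up from $\At(-r,r)$, $\zeta(1+r)$, $\zeta(1+2r)$ or $\zeta(1-r)^{-1}$. This is precisely what the Laurent expansion $F(r)=2/r+O(1)$ encoded in the excerpt is designed to guarantee.
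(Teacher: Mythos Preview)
Your proposal is correct and follows essentially the same approach as the paper's own proof: it uses the functional-equation identity to collapse \eqref{eq:Cauchy} into a single integral on $\RRe(r)=c'$, substitutes the closed form from Theorem~\ref{tratiostheoremone}, identifies via the Laurent expansion that $F(r)=2/r+O(1)$ so that $F(r)-2/r$ is analytic and can be shifted to the imaginary axis, and then evaluates the residual piece $\frac{1}{2\pi i}\int_{(c')}\frac{2\phi(-ir)}{r}\,dr=\phi(0)$ exactly as done for the first family. Your emphasis on checking the horizontal integrals and the analyticity of the subtracted integrand at $r=0$ is well placed and matches the paper's treatment.
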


As in the previous family we make the change of variables,
$$\tau = \frac{uL}{\pi}\quad {\rm with}\quad L=\log\left(\frac{\sqrt{X}}{2\pi}\right),$$
and we define the test function $\psi$ by \eqref{defofpsi}.


\begin{lemma}
Let $\Ft$ be the family defined by defined by \eqref{oneparameterfamily}.
Then,
\begin{equation} \label{secondconductorcondition}
\frac{1}{|\FX|}\sum_{E_t\in \FX}\log\left(\frac{\sqrt{C(t)}}{2\pi}\right)\sim\log\left(\frac{\sqrt{X}}{2\pi}\right).
\end{equation}
\end{lemma}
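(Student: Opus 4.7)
The plan is to reduce the lemma to a direct computation using the explicit form of the conductor. Recall that $C(t) = 2^3(t^2+3t+9)^2$ whenever $144t^2+60t+13$ is square-free, and in any case $C(t) \leq 2^3(t^2+3t+9)^2$ with $C(t) \geq \operatorname{rad}(2^3(t^2+3t+9)^2) \gg t$, so for every $t$ in our range we have the uniform two-sided bound
\begin{equation*}
\log\sqrt{C(t)} = \log(t^2+3t+9) + O(\log t) = 2\log t + O(1).
\end{equation*}
This is the only ``structural'' input needed; Rizzo's result on the conductor is used for the typical $t$, and crude upper/lower bounds handle the exceptional $t$ (which form a subset of density less than one but contribute admissibly to the average).

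Next I would carry out the averaging. Since $\Ft(X) = \{E_t : t \leq X^{1/4}\}$, we have $|\Ft(X)| \sim X^{1/4}$, and by partial summation (or Stirling)
\begin{equation*}
\sum_{t \leq X^{1/4}} \log t = X^{1/4} \log(X^{1/4}) - X^{1/4} + O(\log X) = \tfrac{1}{4} X^{1/4} \log X - X^{1/4} + O(\log X).
\end{equation*}
Dividing by $|\Ft(X)|$ yields
\begin{equation*}
\frac{1}{|\Ft(X)|} \sum_{E_t \in \Ft(X)} \log\sqrt{C(t)}
= 2 \cdot \tfrac{1}{4}\log X + O(1)
= \tfrac{1}{2}\log X + O(1).
\end{equation*}
Subtracting $\log(2\pi)$ term-by-term gives
\begin{equation*}
\frac{1}{|\Ft(X)|} \sum_{E_t \in \Ft(X)} \log\left(\frac{\sqrt{C(t)}}{2\pi}\right)
= \tfrac{1}{2}\log X + O(1) \sim \log\left(\frac{\sqrt{X}}{2\pi}\right),
\end{equation*}
which is the claim.

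There is no real obstacle; the only mildly delicate point is handling the $t$ for which $144t^2+60t+13$ fails to be square-free, since then the formula $C(t) = 2^3(t^2+3t+9)^2$ need not hold. One overcomes this by using the trivial bounds $\operatorname{rad}(N_{E_t}) \leq C(t) \leq 2^3(t^2+3t+9)^2$, which force $\log\sqrt{C(t)} = 2\log t + O(\log t)$ uniformly; since each $t$ contributes $O(\log X)$ and there are $X^{1/4}$ values of $t$, the discrepancy between $C(t)$ and $2^3(t^2+3t+9)^2$ is absorbed into the $O(1)$ error after dividing by $|\Ft(X)|$. (Alternatively, one may quote the positive density of square-free values of $144t^2+60t+13$ to see that the typical $t$ already dominates the average.) This gives the required asymptotic.
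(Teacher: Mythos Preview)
Your argument has a genuine gap in the handling of those $t$ for which the exact conductor formula $C(t)=2^3(t^2+3t+9)^2$ fails, and neither of your two proposed fixes closes it.

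The displayed chain $\log\sqrt{C(t)} = \log(t^2+3t+9) + O(\log t) = 2\log t + O(1)$ is internally inconsistent: the first step carries an $O(\log t)$ error (the only unconditional two-sided bound you have is $1\ll C(t)\le\Delta(t)\ll t^4$; your radical lower bound $\operatorname{rad}(t^2+3t+9)\gg t$ is itself unproved and is essentially an ABC-type statement), and one cannot then pass to $O(1)$. With the honest $O(\log t)$ bound, your averaging step yields
\[
\frac{1}{|\Ft(X)|}\sum_{E_t\in\Ft(X)}\log\sqrt{C(t)} \;=\; \tfrac12\log X + O(\log X),
\]
which says nothing. Your final paragraph repeats this error: ``each $t$ contributes $O(\log X)$ and there are $X^{1/4}$ values of $t$'' gives, after dividing by $|\Ft(X)|$, an $O(\log X)$ discrepancy, not $O(1)$.

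The alternative you suggest, quoting the positive density of square-free values of $144t^2+60t+13$, does not rescue the argument either. That density is some $1-c$ with $c>0$, so the ``good'' $t$ contribute only $(1-c)\cdot\tfrac12\log X$ to the average, while on the remaining density-$c$ set you only know $0\le \log\sqrt{C(t)}\le 2\log t + O(1)$. This traps the average in an interval of width $\asymp c\log X$, the same order as the main term, so no asymptotic follows.

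What the paper does instead is to write
\[
\frac{1}{T}\sum_{t\le T}\log C(t)\;=\;\frac{1}{T}\sum_{t\le T}\log\Delta(t)\;-\;\frac{1}{T}\sum_{t\le T}\log\frac{\Delta(t)}{C(t)}
\]
and show the second sum is $o(\log X)$ by swapping the order of summation over $t$ and over primes $p\mid\Delta(t)$. The key arithmetic input is that whenever $p\,\|\,(t^2+3t+9)$ one has $\nu_p(\Delta(t))=\nu_p(C(t))=2$, so only primes with $p^2\mid(t^2+3t+9)$ (i.e.\ $\nu_p(\Delta(t))>2$) contribute to $\Delta(t)/C(t)$; for each such prime power the number of admissible $t\le T$ is $\ll T/p^{\nu}+1$, and summing the resulting $\log p^{\nu-2}$ gives $o(T\log X)$. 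This is precisely the control you were missing: it quantifies that, although the exceptional $t$ have positive density, the excess $\log(\Delta(t)/C(t))$ is bounded on average because the offending square factor is typically a fixed small prime.
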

\begin{proof}
Let $T=X^{\frac{1}{4}}$. Showing $\eqref{secondconductorcondition}$ is equivalent to showing that
$$\frac{1}{T}\sum_{t\leq T} \log(C(t))\sim \log X.$$
So we begin by noting that since $\Delta(t)\sim X$ we have that $$\frac{1}{T}\sum_{t\leq T} \log (\Delta(t))\sim\log X.$$
So we write
\begin{equation*}
\frac{1}{T}\sum_{E_t\in \C}\log C(t)= \frac{1}{T}\sum_{t\leq T}\log \Delta(t)-\frac{1}{T}\sum_{t\leq T}\log\left(\frac{\Delta(t)}{C(t)}\right)
\end{equation*}
and we will show that the second term on the right hand side is in the error term. Let $\nu_p(f(t))$ denote the function such that $p^{\nu_p(f(t))}||f(t)$ then we have that
$$\frac{1}{T}\sum_{t\leq T}\log\left(\frac{\Delta(t)}{C(t)}\right)=\frac{1}{T}\sum_{t\leq T}\sum_{p^{\nu_p(\Delta(t))}||\Delta(t)}\log(p^{\nu_p(\Delta(t))-\nu_p(C(t))}).$$

Since $\Delta(t)=2^4(t^2+3t+9)^2$ we have that $\nu_p(\Delta(t))\geq 2$ for primes $p>2$ and for primes $p>2,3$ we have that $p\mid C(t)$ implies that $p\mid \Delta(t)$. Now suppose $\nu_p(\Delta(t))=2$ then $p || t^2+3t+9$ and thus $p^2|| C(t)$. Hence $$\sum_{t\leq T}\frac{1}{\log X}\sum_{\substack{p\mid \Delta(t)\\\nu_p(\Delta(t))=2}}\log(p^{\nu_p(\Delta(t))-\nu_p(C(t))})=0.$$
Thus,
\begin{align*}
\frac{1}{T}\sum_{t\leq T}\log\left(\frac{\Delta(t)}{C(t)}\right)&=\frac{1}{T}\sum_{t\leq T}\sum_{\substack{p^{\nu_p(\Delta(t))}||\Delta(t) \\ \nu_p(\Delta(t))>2}}\log(p^{\nu_p(\Delta(t))-\nu_p(C(t))})\\
&\leq \frac{1}{T}\sum_{t\leq T}\sum_{\substack{p^{\nu_p(\Delta(t))}||\Delta(t) \\ \nu_p(\Delta(t))>2}}\log(p^{\nu_p(\Delta(t))-2})\\
&\ll\frac{1}{T}\sum_{\substack{p^{\nu_p(\Delta(t))}\ll T^2 \\ \nu_p(\Delta(t))>2}}\log(p^{\nu_p(\Delta(t))-2})\sum_{t\leq \frac{T}{p^{\nu_p(\Delta(t))}}}1\\
&\ll\sum_{\substack{p^{\nu_p(\Delta(t))}\ll T^2 \\ \nu_p(\Delta(t))>2}}\frac{\log(p^{\nu_p(\Delta(t))-2})}{p^{\nu_p(\Delta(t))}}+\frac{1}{T}\sum_{\substack{p^{\nu_p(\Delta(t))}\ll T^2 \\ \nu_p(\Delta(t))>2}}\log(p^{\nu_p(\Delta(t))-2})\\
&\ll T^{\frac{-1}{3}}=\underline{o}(\log X)
\end{align*}
by partial summation.
\end{proof}

Then, using the change of variables \eqref{changeofV},
we rewrite the statement of Theorem \ref{tratiostheoremtwo} as
\begin{eqnarray*}
\frac{1}{|\FX|} \sum_{E \in \FX} \sum_{\gamma_E} \psi \left( \frac{\gamma_E L}{\pi} \right) &\sim&
\int_{-\infty}^\infty \psi (\tau)
 g(\tau) \;d\tau + \phi(0) \\
 &=& \int_{-\infty}^\infty \psi (\tau)
 h(\tau) \;d\tau \\
\end{eqnarray*}
where
\begin{eqnarray*} \nonumber
g(\tau) &=& h(\tau) - \delta_0(\tau) \\ &=&  \frac{1}{2L} \bigg[2 \log\left(\frac{\sqrt{X}}{2\pi}\right) +
\frac{\Gamma'}{\Gamma}\left(1 + \frac{\pi i \tau}{L}\right) +
\frac{\Gamma'}{\Gamma}\left(1 - \frac{\pi i\tau}{L}\right) +2
\bigg\{ -\frac{\zeta'}{\zeta}\left(1 + \frac{2\pi i \tau}{L}\right)
\nonumber \\
&-& \frac{\zeta'}{\zeta}\left(1 + \frac{\pi i \tau}{L}\right)+
A_{\alpha}\left(\frac{\pi i\tau }{L}, \frac{\pi i\tau }{L}\right)
+\exp\left(-\frac{2\pi i\tau}{L}\log\left(\frac{\sqrt{X}}{2\pi}\right)
\right)\nonumber\\
& \times&\frac{\Gamma(1 - \frac{\pi i\tau}{L})}{\Gamma(1 + \frac{\pi
i\tau}{L})} \frac{\zeta(1+\frac{2\pi i\tau}{L})\zeta(1+\frac{\pi
i\tau}{L})}{\zeta(1-\frac{\pi i\tau}{L})} A\left(-\frac{\pi i\tau
}{L}, \frac{\pi i\tau}{L}\right)\bigg\}-\frac{2L}{\pi i\tau}\bigg]. \label{gsecondfamily}
\end{eqnarray*}

We then compute the Taylor expansion of $h(\tau)$ in $L^{-1}$ which gives
\begin{align*}
h(\tau) &= \frac{1}{2L} \Bigg[2L -2\gamma_0+2\bigg[-\left(\frac{-L}{2\pi i \tau}+\frac{-L}{\pi i \tau}+2\gamma_0\right)+ A_{\alpha}(0,0)+O(L^{-1})\\
&+e^{-2\pi i\tau}\left(1+\frac{2\pi \gamma_0 i \tau}{L}+O(L^{-2})\right)\left(\frac{L}{2\pi i\tau}+\gamma_0-\frac{2\pi \gamma_0 i \tau}{L}+O(L^{-2})\right)\\
&\times\left(\frac{L}{\pi i\tau}+\gamma_0-\frac{\pi \gamma_0 i \tau}{L}+O(L^{-2})\right)\left(\frac{-\pi i \tau}{L}+\frac{\pi^2\gamma_0\tau^2}{L^2}+O(L^{-3})\right)\\
&\times\left(1+(A_\gamma(0,0)-A_\alpha(0,0))\frac{\pi i \tau}{L}+O(L^{-2})\right)\bigg]-\frac{2L}{\pi i \tau}\Bigg]+\delta_0(\tau)\\
&=1+\frac{\sin(2\pi \tau)}{2\pi \tau}+\frac{1-\cos(2\pi \tau)}{2\pi i \tau}+\delta_0(\tau)+\frac{A_1(\tau)}{L}+O\left(\frac{1}{L^2}\right)
\end{align*}
where $$A_1(\tau) =
A_\alpha(0,0)-3\gamma_0
+e^{-2 \pi i \tau} \left(\frac{1}{2}\left(A_\alpha(0,0)-A_\gamma(0,0)\right) - 3\gamma_0\right)  .
$$
Since $\psi$ is an even function we have that
$$\int_{-\infty}^\infty \psi(\tau)\left(\frac{1-\cos(2\pi \tau)}{2\pi i \tau}\right)d\tau = 0$$
and hence
\begin{align*}
&\frac{1}{|\FX|} \sum_{E \in \FX} \sum_{\gamma_E} \psi \left( \frac{\gamma_E L}{\pi} \right) \\ \sim&
\int_{-\infty}^\infty \psi(\tau)
\bigg[1 +\delta_0(\tau) +\frac{\sin(2\pi\tau)}{2\pi \tau} + A_1(\tau) L^{-1} + O(L^{-2} ) \bigg] d\tau .
\end{align*}
Then, the leading terms for the one-level scaling density associated to
the  family $\Ft$ given by \eqref{oneparameterfamily} give
\begin{eqnarray*}
\mathcal{W}(\tau) &=& 1 +\delta_0(\tau) +\frac{\sin(2\pi\tau)}{2\pi \tau} \\
&=& \delta_0(\tau) + \mathcal{W}(SO(\mbox{even}))(\tau) .
\end{eqnarray*}
This is a new phenomenon, as previous studies of the one-level density for this
family with other techniques did not allow us to identify the second part of the
density, since the densities associated with $O$, $SO(\mbox{even})$ and $SO(\mbox{odd})$
are undistinguishable for small support of the Fourier transform. This result
may seem surprising at first glance, since we obtain the symmetry type  $SO(\mbox{even})$ for a family of odd rank, but we explain in the next section why this makes sense
via the Birch and Swinnerton-Dyer conjectures.

\section{One-level scaling density for the second family}
\label{weird-density-explained}

We give in this section some explanations for the density
$$\mathcal{W}(\tau) = 1 +\delta_0(\tau) +\frac{\sin(2\pi\tau)}{2\pi \tau}$$
which is associated to the one-parameter family $\mathcal{F}$ of Section \ref{oneparameter}.
There are two pieces for this density, where the first one corresponds to the
contribution of the family zero at the central point. We then write
 $$\mathcal{W}(\tau) = \mathcal{W}_1(\tau)  + \mathcal{W}_2(\tau),$$
where $\mathcal{W}_1(\tau) = \delta_0(\tau).$

We first review the steps that led us to Theorem  \ref{tratiostheoremtwo}.
Using the ratios conjectures, we computed in Section \ref{oneparameter}
the average value of
$$
\frac{ L(1/2+\alpha, E_t)}{ L(1/2+\gamma, E_t)}$$
for the family $\mathcal{F}$.
Writing
$$
L(s, E_t) = \prod_p \left( 1 - \frac{\lambda_t(p)}{p^s} + \frac{\psi(p)}{p^{2s}} \right)^{-1},
$$
we have by Lemma \ref{qstarone}  that the average of $\lambda_t(p)$ over the family is
$$ - \frac{ 1 + \chi_4(p)}{\sqrt{p}}.$$
We then define $\lambda_t^*(p)$ by
$$
\lambda_t(p) = \lambda_t^*(p) - \frac{ 1 + \chi_4(p)}{\sqrt{p}},$$
and the average of $\lambda_t^*(p)$ over the family is $0$ by
Lemma \ref{qstarone}.

Now, for $\RRe{(s)} > 1$,
\begin{eqnarray*}
L(s, E_t) &=& \prod_p \left( 1 - \frac{\lambda_t^*(p)}{p^s} + \frac{\psi(p)}{p^{2s}} +
\frac{1 + \chi_4(p)}{p^{1/2+s}} \right)^{-1} \\
&=&  \prod_p \left( 1 + \frac{\lambda_t^*(p)}{p^s} - \frac{\psi(p)}{p^s} - \frac{1 + \chi_4(p)}{p^{1/2+s}}
+ \frac{\lambda_t^*(p)^2}{p^{2s}} + \mbox{h.o.t.} \right)\\
&=& \prod_p \left( 1 + \frac{\lambda_t^*(p)}{p^s} -  \frac{1 + \chi_4(p)}{p^{1/2+s}}
+ \frac{\lambda_t^*(p)^2 - \psi(p)}{p^{2s}} + \mbox{h.o.t.}\right)
\end{eqnarray*}
and
\begin{eqnarray*}
L(s, E_t)^{-1} &=& \prod_p\left(  1 - \frac{\lambda_t^*(p)}{p^s} + \frac{\psi(p)}{p^{2s}} +
\frac{1 + \chi_4(p)}{p^{1/2+s}} + \mbox{h.o.t.}\right),
\end{eqnarray*}
where the higher order terms are bounded by $p^{-2 \sigma - 1/2 + \varepsilon}$. We will use $\sigma = 1/2$ below, so the higher order terms
do not affect the convergence.

By Lemma \ref{qstarone},
the average over the family of $\lambda_t^*(p)$ is $0$, and by Lemma \ref{qstartwo},
the average
over the family of $\lambda_t^*(p)^2$ is 1.
Then, replacing each expression in the Euler product by its average over the family, we obtained from $\eqref{etavgeuler}$ the ``average Euler product"
\begin{eqnarray*}
\prod_p \left( 1 - \frac{1 + \chi_4(p)}{p^{1+\alpha}} +
\frac{1}{p^{1 + 2 \gamma}} + \frac{1 + \chi_4(p)}{p^{1+\gamma}}  - \frac{1}{p^{1 + \alpha+\gamma}} + \mbox{h.o.t} \right),
\end{eqnarray*}
where the higher order terms give an absolutely convergent product in the neighborhood of $(0,0)$, so the above behaves like
$$
\frac{\zeta(1+\gamma) \zeta(1+2 \gamma)}{\zeta(1+\alpha) \zeta(1 + \alpha + \gamma)},
$$
which is the result of Theorem \ref{factorHdash}. Using the ratios conjecture according to the usual recipe, we got in Section \ref{oneparameter} that the leading terms for
the one-level scaling density were
$$\mathcal{W}(t) = \delta_0(\tau) + 1 + \frac{\sin{(2 \pi \tau)}}{\pi \tau}.$$

Then, in order to isolate the family zero from the previous argument, we
first write
\begin{eqnarray}
\label{firstprod}
L^*(s, E_t) = L(s,E_t) L(s),
\end{eqnarray}
where
$$
L^*(s, E_t) = \prod_p \left( 1 - \frac{\lambda_t^*(p)}{p^s} + \frac{\psi(p)}{p^{2s}} \right)^{-1},$$
and
\begin{eqnarray}  \label{L-function}
L(s) = \frac{L^*(s, E_t)}{L(s,E_t)}
= \prod_p \left( 1 + \frac{1 + \chi_4(p)}{p^{1/2+s}} \right) F(s), \end{eqnarray}
where $F(s)$ converges absolutely for Re$(s) \geq 1/2,$ and has no zeroes in this region.
Furthermore,
$$\prod_p \left( 1 + \frac{1 + \chi_4(p)}{p^{s+1/2}} \right)$$
is related to the Dedekind zeta function
$\zeta_K(s+1/2)$, where $K = \mathbb{Q}(i)$, and we can rewrite (\ref{L-function}) as
\begin{eqnarray} \label{secondprod}
L(s) = \frac{L^*(s, E_t)}{L(s,E_t)} = \zeta_K(s+1/2) F(s), \end{eqnarray}
where $F(s)$ converges absolutely for Re$(s) \geq 1/2,$ and has no zeroes in this region (renaming $F$).
From \eqref{firstprod} and \eqref{secondprod}, the set of zeroes of $L(s, E_t)$ for Re$(s)=1/2$ is the union of the zeroes of $L^*(s, E_t)$ and
the poles of
$\zeta_K(s+1/2)$ for Re$(s)=1/2$.
In other words,
\begin{equation} \label{sum-of-2-densities}
\mathcal{W}(\tau) = \mathcal{W}_1(\tau)  + \mathcal{W}_2(\tau),\end{equation}
where $\mathcal{W}_2(\tau)$ is the density corresponding to the L-functions
$L^*(s, E_t)$
on average for $E_t \in \F$ for the family $\mathcal{F}$ of \eqref{oneparameterfamily},
and
$\mathcal{W}_1(\tau)$ is the density corresponding to the zeroes coming from the poles of
$L(s) = \zeta_K(s+1/2)F(s)$ for Re$(s)=1/2$. Then,  $\mathcal{W}_1(\tau)$ does not
depend of the family, and since
there is only one pole at $s=1/2$,
this gives $$\mathcal{W}_1(\tau) = \delta_0(\tau).$$

We now study the zeroes of $L^*(s, E_t)$. Of course, these are not the L-functions
associated to any elliptic curve, but we can predict the ``rank" of those L-functions
assuming the Birch and Swinnerton-Dyer conjecture for the original L-functions
$L(s, E_t)$. In a nutshell, if the original L-functions have odd rank, then
the L-functions $L^*(s, E_t)$ have even rank, since $\lambda_t(p) = \lambda_t^*(p) - \displaystyle \frac{ 1 + \chi_4(p)}{\sqrt{p}}.$


More precisely, let $E$ be an elliptic curve of rank $r$. With the usual notation,
we have
 $$\lambda_E(p) = \frac{a_E(p)}{\sqrt{p}},$$ and the Birch and Swinnerton-Dyer conjecture \cite{BSD} predicts that
$$\prod_{p \leq x}  \frac{p+1-a_E(p)}{p}  = \prod_{p \leq x}  1 + \frac{1 -  \lambda_E(p) \sqrt{p}}{p}
\sim C (\log{x})^r$$ for some constant $C$ depending on $E$.
Then, for the L-functions $L^*(s, E_t)$, the
Birch and Swinnerton-Dyer conjecture  predicts that
\begin{eqnarray*}
\prod_{p \leq x}  \frac{p+1-a_E^*(p)}{p}
=\prod_{p \leq x}  \frac{p+1-a_E(p) - (1 + \chi_4(p))}{p}
\sim C' (\log{x})^{r-1}
\end{eqnarray*}
where $r$ is the rank of the original curve $E$ and
$C'$ depends on $E$, since
\begin{eqnarray*}
&&\prod_{p \leq x}  \frac{p+1-a_E(p) - (1 + \chi_4(p))}{p} \\
&&= \prod_{p \leq x} \frac{p+1-a_E(p)}{p} \;\; \prod_{p \leq x} \frac{p+1-a_E(p)- (1 + \chi_4(p))}{p+1-a_E(p)} \\
&&= \prod_{p \leq x} \frac{p+1-a_E(p)}{p} \;\;\prod_{p \leq x} \frac{p-1}{p}
\;\;\prod_{p \leq x} \frac{p+1-a_E(p)- (1 + \chi_4(p))}{p+1-a_E(p)} \frac{p}{p-1} \\
&&= \prod_{p \leq x} \frac{p+1-a_E(p)}{p} \;\;\prod_{p \leq x} \frac{p-1}{p}
\;\;\prod_{p \leq x} \frac{p^2 - p a_E(p) - p  \chi_4(p)}{p^2-p a_E(p)+ a_E(p) - 1} \\
&&= \prod_{p \leq x} \frac{p+1-a_E(p)}{p} \;\;\prod_{p \leq x} \frac{p-1}{p} \;\;
\;\;\prod_{p \leq x} 1 + \frac{- p  \chi_4(p)-a_E(p)+1}{p^2-p a_E(p)+ a_E(p) - 1} \\
&&\sim C' (\log{x})^{r-1}.
\end{eqnarray*}

Then, since $r$ was odd for the original family, the L-functions $L^*(s, E_t)$
behave like a family of {\it even} rank, and we should have
$$\mathcal{W}_2(\tau) = 1 + \frac{\sin{(2 \pi \tau)}}{2 \pi \tau},$$
in \eqref{sum-of-2-densities}, and
\begin{eqnarray*}
\mathcal{W}(\tau) &=& \mathcal{W}_1(\tau) + \mathcal{W}_2(\tau) = \delta_0(\tau) + 1 + \frac{\sin{(2 \pi \tau)}}{2 \pi \tau} \\ &=&
\delta_0(\tau) +  \mathcal{W}(SO(\mbox{even}))(\tau).\end{eqnarray*}

\newpage
\kommentar { In \reff{oneleveldensity} we set
\begin{align} \nonumber
W(t) & := \frac{1}{|\FX|}\frac{1}{2\pi} \sum_{E\in \FX}\bigg[ 2
\log\bigg(\frac{\sqrt{N_E}}{2\pi} \bigg) +
\frac{\Gamma'}{\Gamma}(1+it) \\ \nonumber
& + \frac{\Gamma'}{\Gamma}(1-it) + 2 \bigg(-\frac{\zeta'}{\zeta}(1+2it) + A_{\alpha}(it,it) \\
&
-\omega_E\left(\frac{\sqrt{N_E}}{2\pi}\right)^{-2it}\frac{\Gamma(1-it)}{\Gamma(1+it)}
\zeta(1+2it)A(-it,it)\bigg) \bigg].
\end{align}
Thus, we have
\begin{equation}
D(\phi, \FX) = \int_{-\infty}^\infty \phi(t) W(t) dt +
O(X^{-1/2+\varepsilon}).
\end{equation}
Young showed that $\FX$ has orthogonal symmetry. }


\end{document}